\documentclass[12pt]{amsart} \usepackage{amssymb,amsmath,amstext}
\usepackage[colorlinks=false, urlcolor=blue, linkcolor=blue,
citecolor=blue]{hyperref} \usepackage[margin=1in]{geometry}
\usepackage {tikz}
\usepackage {cite}

\usetikzlibrary{arrows,decorations.pathmorphing,backgrounds,positioning,fit,matrix}
\definecolor{processblue}{cmyk}{0.96,0,0,0} \usepackage{enumerate}
\input{xy} \xyoption{all} \usepackage{xypic}

\setlength{\oddsidemargin}{0.0in} \setlength{\evensidemargin}{0.0in}
\setlength{\textwidth}{6.5in} \setlength{\parskip}{0.15cm}
\setlength{\parindent}{0.5cm}

\numberwithin{equation}{section}

\theoremstyle{plain} \newtheorem{thm}{Theorem}[section]
\newtheorem{prop}[thm]{Proposition} \newtheorem{lem}[thm]{Lemma}

\theoremstyle{definition} \newtheorem{dfn}[thm]{Definition}
\newtheorem{exmp}[thm]{Example} \newtheorem{rem}[thm]{Remark}


\DeclareMathOperator{\supp}{Supp} 
\setcounter{MaxMatrixCols}{20}


\begin{document}
\author{Nasrin Altafi} \address{Department of Mathematics, KTH Royal
  Institute of Technology, S-100 44 Stockholm, Sweden}
\email{nasrinar@kth.se} \author{Mats Boij} \address{Department of
  Mathematics, KTH Royal Institute of Technology, S-100 44 Stockholm,
  Sweden} \email{boij@kth.se}

\title{The weak Lefschetz property of equigenerated monomial ideals}

\keywords{Weak Lefschetz property, monomial ideals, group actions}
\subjclass[2010]{13E10, 13A02}
\begin{abstract}
  We determine a sharp lower bound for the Hilbert function in degree
  $d$ of a monomial algebra failing the weak Lefschetz property over a polynomial ring
  with $n$ variables and generated in degree $d$, for any $d\geq 2$
  and $n\geq 3$. We consider artinian ideals in the polynomial ring
  with $n$ variables generated by homogeneous polynomials of degree
  $d$ invariant under an action of the cyclic group
  $\mathbb{Z}/d\mathbb{Z}$, for any $n\geq 3$ and any $d\geq 2$. We
  give a complete classification of such ideals in terms of the weak Lefschetz property
  depending on the action.

\end{abstract}
\maketitle
\section{Introduction}
The weak Lefschetz property (WLP) for an artinian graded algebra $A$
over a field $\mathbb{K}$, says there exists a linear form $\ell$ that
induces, for each degree $i$, a multiplication map
$\times \ell : (A)_i\longrightarrow (A)_{i+1}$ that has maximal rank,
i.e. that is either injective or surjective. Though many algebras are
expected to have the WLP, establishing this property for a specific
class of algebras is often rather difficult. In this paper we study
the WLP of the specific class of algebras which are the quotients of a
polynomial ring $S=\mathbb{K}[x_1,\dots , x_n]$ over field
$\mathbb{K}$ of characteristic zero by artinian monomial ideals
generated in the same degree $d$.  For this class of artinian
algebras, E. Mezzetti and R. M. Mir\'o-Roig \cite{MM}, showed that
$2n-1$ is the sharp lower bound for the number of generators of $I$
when the injectivity fails for $S/I$ in degree $d-1$. In fact they
give the lower bound for the number of generators for the minimal
monomial Togliatti systems $I\subset \mathbb{K}[x_1,\dots , x_n]$ of
the forms of degree $d$.  For more details see the original articles
of Togliatti \cite{Togliatti2,Togliatti1}.  In the first part
of this article we establish the lower bound for the number of
monomials in the cobasis of the ideal $I$ in the ring $S$ or
equivalently, lower bound for the Hilbert function of $S/I$ in degree
$d$, which is $H_{S/I}(d) :=\dim_{\mathbb{K}} ({S/I})_d$, where
sujectivity fails in degree $d-1$. Observe that once multiplication
by a general linear form on a quotient of $S$ is surjective, then it
remains surjective in the next degrees. This implies that all these
algebras with the Hilbert function $H_{S/I}(d)$ below our bound
satisfy the WLP.

In the main theorems of the first part of this paper, we provide a
sharp lower bound for $H_{S/I}(d)$ for artinian monomial algebra
$S/I$, where the surjectivity fails for $S/I$ in degree $d-1$. For the
cases when the number of variables is less than three the bound is
known. The first main theorem provides the bound when the polynomial
ring has three variables.
\begin{thm}
  Let $I\subset S=\mathbb{K}[x_1,x_2,x_3]$ be an artinian monomial
  ideal generated in degree $d$, for $d\geq 2$ such that $S/I$ fails
  to have the WLP. Then we have that
  \[ H_{S/I}(d) \geq \left\{
    \begin{array}{ll}
      3d-3& \mbox{if $d$ is odd }\\
      3d-2 & \mbox{if $d$ is
             even}.
    \end{array} \right. 
  \]
  Furthermore, the bounds are sharp.
\end{thm}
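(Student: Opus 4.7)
My plan is to reformulate the hypothesis via Macaulay inverse systems. By the standard duality between the cokernel of $\times\ell$ and the annihilator of $\ell$ in the inverse system, and taking $\ell = x + y + z$ (allowed by upper semicontinuity and the monomial structure), $S/I$ fails surjectivity in degree $d - 1$ if and only if there is a nonzero form $F \in \mathbb{K}[X, Y, Z]_d$ with $(\partial_X + \partial_Y + \partial_Z) F = 0$ whose monomial support $\mathrm{supp}(F)$ is contained in the cobasis $M$ of $I$ in degree $d$. The artinian hypothesis gives $x^d, y^d, z^d \in I$, hence $\mathrm{supp}(F)$ avoids the three corner monomials $X^d, Y^d, Z^d$. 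The conclusion $|M| \geq 3d - 3$ (resp.\ $3d - 2$) is thus equivalent to a support lower bound on $F$. The only other way WLP can fail is injectivity failure in degree $d - 1$, which forces $|M| \geq \binom{d+1}{2}$; a direct check shows this already exceeds the claimed bound for $d \geq 3$.

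After the change of variables $U = X - Z$, $V = Y - Z$, $W = Z$, the operator $\partial_X + \partial_Y + \partial_Z$ becomes $\partial_W$, so its kernel in $\mathbb{K}[X, Y, Z]_d$ is exactly the image of the embedding $G \mapsto G(X - Z, Y - Z)$ from $\mathbb{K}[U, V]_d$. The three corner-vanishing conditions translate into $G(1, 0) = G(0, 1) = G(-1, -1) = 0$, forcing the divisibility $UV(U - V) \mid G$. Writing $G = UV(U - V) \cdot H$ for a nonzero $H \in \mathbb{K}[U, V]_{d - 3}$, we obtain
\[
F = \Delta \cdot H(X - Z,\, Y - Z), \qquad \Delta := (X - Y)(Y - Z)(X - Z).
\]
The theorem is now equivalent to the polynomial support-count: for every nonzero $H \in \mathbb{K}[U, V]_{d - 3}$, the form $F$ above has at least $3d - 3$ (resp.\ $3d - 2$) distinct monomials in $X, Y, Z$ when $d$ is odd (resp.\ even).

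For the lower bound I would analyze $F$ via its restrictions to the three edges of the monomial simplex $T_d = \{(a, b, c) \in \mathbb{Z}_{\geq 0}^3 : a + b + c = d\}$ together with its interior. The three restrictions
\[
F(X, Y, 0) = XY(X - Y)\, H(X, Y), \qquad F(X, 0, Z) = -XZ(X - Z)\, H(X - Z, -Z),
\]
and the analog with $X = 0$ exhibit edge divisibility, and crucially are all determined by the single polynomial $H$, so any reduction of support on one edge is paid for elsewhere. The extremal configuration $H = U^{d - 3}$ yields $F = (X - Y)(Y - Z)(X - Z)^{d - 2}$, for which a direct computation gives nonzero-coefficient counts on the rows $\{b = 0\}, \{b = 1\}, \{b = 2\}$ equal to $(d - 1, d - 1, d - 1)$ when $d$ is odd and $(d - 1, d, d - 1)$ when $d$ is even. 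The parity split arises from the identity $\binom{d - 2}{k} = \binom{d - 2}{k - 1}$, which has the integer solution $k = (d - 1)/2$ only for odd $d$ and then cancels one middle-row coefficient. Sharpness follows by taking $I$ to be the monomial ideal whose cobasis equals $\mathrm{supp}(F)$ for this extremal $F$: it is artinian and fails surjectivity in degree $d - 1$ by construction. The hardest step is showing that no other $H$ can beat the extremal count; I would handle it by a case analysis on whether $H$ is divisible by one of $\{U, V, U - V\}$ (reducing to a lower-degree instance of the problem) or is ``generic'' (in which case a direct edge-interior tally carries the day), with the parity refinement traced to the availability of the binomial cancellation above.
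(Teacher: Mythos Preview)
Your reduction is correct and in some ways tidier than the paper's own setup. Passing to $U=X-Z$, $V=Y-Z$ identifies $\ker(\partial_X+\partial_Y+\partial_Z)$ in degree $d$ with $\mathbb{K}[U,V]_d$, and the artinian constraint (no $X^d,Y^d,Z^d$ in $\mathrm{supp}(F)$) forces $UV(U-V)\mid G$, giving $F=\Delta\cdot H(X-Z,Y-Z)$. The paper's Lemma~4.1 only records that $F$ splits into linear forms over $\overline{\mathbb{K}}$ without isolating the Vandermonde factor $\Delta$. Your sharpness example $H=U^{d-3}$ matches (up to permuting variables) the paper's $(y_1-y_2)(y_1-y_3)(y_2-y_3)^{d-2}$, and the parity count of its support is right. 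The reduction of ``fails WLP'' to ``fails surjectivity in degree $d-1$'' is also fine.

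The genuine gap is the lower bound itself, which you do not prove. Your proposed induction via divisibility does not close: if $U\mid H$, say $H=UH'$, then $F=(X-Z)F'$ with $F'$ the degree-$(d-1)$ form attached to $H'$, but multiplying by $X-Z$ can \emph{shrink} support. Row by row in the $Y$-degree, $(X-Z)g(X,Z)$ replaces the coefficient vector $(c_i)$ of $g$ by the difference vector $(c_{i-1}-c_i)$, and a block of equal nonzero $c_i$'s collapses to just two nonzero entries; so knowing $|\mathrm{supp}(F')|\ge 3(d-1)-3$ yields nothing for $|\mathrm{supp}(F)|$. The ``generic'' branch is equally unsupported: with $H=U^{d-3}+U^{d-4}V+\cdots+V^{d-3}$ (divisible by none of $U,V,U-V$) one gets $(X-Y)H(X,Y)=X^{d-2}-Y^{d-2}$, so $F(X,Y,0)=X^{d-1}Y-XY^{d-1}$ has only two monomials on that edge, and an ``edge--interior tally'' must then locate the remaining $3d-5$ monomials in places you have not identified.

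The paper's argument proceeds by an entirely different mechanism and does not pass through your factorization. Writing $f=\sum_k y_i^{k}g_k$ and setting $a_i=\max\{\deg_i m:m\in\mathrm{supp}(f)\}$, Proposition~4.2 (proved via a two-variable SLP/Toeplitz-minor argument) shows that every level $\{m:\deg_i m=k\}$ with $0\le k\le a_i$ contributes at least $d-a_i+1$ monomials to $\mathrm{supp}(f)$. This yields $|\mathrm{supp}(f)|\ge (a_1+1)(d-a_1+1)$, which already gives $3d-3$ for $2\le a_1\le d-2$ and strictly more in the interior of that range. The boundary cases $a_1=2$ and $a_1\in\{d-2,d-1\}$ are then handled separately: the first by noting that the kernel inside $\mathbb{K}[y_1,y_2,y_3]/(y_1^{3},y_2^{d},y_3^{d})$ is one-dimensional (SLP of a monomial complete intersection), so the extremal $f$ is forced; the second by explicit partitions of the support triangle using the per-level bound simultaneously for two or three variables. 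If you want to rescue your approach, you need an analogue of this per-level lower bound phrased in terms of $H$, and that is precisely the content you have not supplied.
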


In the second theorem we provide a sharp bound when the
number of variables is more than three.
\begin{thm}
  Let $I\subset S=\mathbb{K}[x_1,\dots ,x_n]$ be an artinian monomial
  ideal generated in degree $d$, for $d\geq 2$ and $n\geq 4$ such that
  $S/I$ fails to have the WLP. Then we have that
$$
H_{S/I}(d)\geq 2d.
$$
Furthermore, the bound is sharp.
\end{thm}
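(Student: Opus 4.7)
The plan is to focus on the case where surjectivity of $\times \ell : (S/I)_{d-1} \to (S/I)_d$ fails for a general linear form $\ell$; by the observation, already recorded in the introduction, that surjectivity once achieved persists and by the fact that $I$ is generated in degree $d$, this is the only failure of WLP that a bound on $H_{S/I}(d)$ can detect, so the remaining cases are vacuous for the statement. After fixing $\ell = x_1 + \cdots + x_n$ by semicontinuity, pass to $R := S/(\ell) \cong \mathbb{K}[y_1,\ldots,y_{n-1}]$ with $y_i$ the image of $x_i$ (so $y_n = -y_1 - \cdots - y_{n-1}$), and let $J$ denote the image of $I$ in $R$. Failure of surjectivity of $\times\ell$ in degree $d-1$ is equivalent to $J_d \subsetneq R_d$, which in turn is witnessed by a nonzero linear functional $\phi \in R_d^{*}$ vanishing on $J_d$.

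The next step is to translate $\phi(\bar{m}) = 0$ for every monomial generator $m \in I_d$ into explicit linear relations among the values $\phi(\nu)$ for $\nu$ in the cobasis $\mathcal{B}$ of $I$ in degree $d$. Expanding $\bar{m} = y_1^{a_1}\cdots y_{n-1}^{a_{n-1}}\,(-y_1-\cdots-y_{n-1})^{a_n}$ for $m = x_1^{a_1}\cdots x_n^{a_n}$ and collecting monomial coefficients produces, for each generator $m$, one such equation. The objective is to prove that a nontrivial $\phi$ satisfying all these relations can exist only when $|\mathcal{B}| \geq 2d$.

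The combinatorial core of the argument is to show that $\mathcal{B}$ must contain two essentially independent \emph{chains} of $d$ monomials: intuitively, a single chain of the form $\{x_i^{d-k}x_j^{k}\}_{k=0}^{d-1}$ living in a two-variable subring is insufficient by itself to support a nonzero $\phi$. Any such $\phi$ supported near this chain is forced, via the linear relations coming from neighboring generators of $I$, to vanish; one sees this cleanly by restricting to a carefully chosen three-variable subring, where the sharper bound of Theorem~1 applies. A second chain, lying in a pair of variables disjoint (or at least independent) from the first, is therefore forced, and this is possible precisely because $n \geq 4$. I expect this dichotomy to be the main obstacle: one must organise a case analysis based on the support of $\phi$ (and possibly induct on $n$), ruling out every configuration of $\mathcal{B}$ with fewer than $2d$ monomials.

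For sharpness, it suffices to produce, for each $d \geq 2$ and $n \geq 4$, an artinian monomial ideal $I \subset \mathbb{K}[x_1,\ldots,x_n]$ generated in degree $d$ with $H_{S/I}(d) = 2d$ for which WLP fails. A natural candidate takes the degree-$d$ cobasis to be $\{x_1^{d-k}x_2^{k}\}_{k=0}^{d-1} \cup \{x_3^{d-k}x_4^{k}\}_{k=0}^{d-1}$, with all other degree-$d$ monomials placed in $I$ and with enough generators added in higher degrees, when $n \geq 5$, to enforce artinianness without altering $H_{S/I}(d)$. Substituting $\ell = x_1 + \cdots + x_n$ into $I$ and exhibiting one monomial of degree $d$ in $R$ that does not lie in $J_d$ confirms that surjectivity fails and hence so does WLP, completing the proof of sharpness.
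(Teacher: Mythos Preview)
Your reduction to failure of surjectivity of $\times\ell:(S/I)_{d-1}\to(S/I)_d$ is fine, and your linear functional $\phi$ is exactly the Macaulay-dual form $f\in(I^{-1})_d$ with $\ell\circ f=0$ that the paper works with. But from that point on the proposal is only a plan, not a proof: the sentence ``I expect this dichotomy to be the main obstacle: one must organise a case analysis \dots'' is where the actual work lies, and you have not done it. The paper's argument is quite different from the ``two chains'' heuristic you sketch. It fixes a variable $y_i$ occurring in $f$, writes $f=\sum_{k=0}^{a_i}y_i^k g_k$ with $a_i=\max\{\deg_i(m):m\in\supp(f)\}$, shows $(\ell')^{a_i-k+1}\circ g_k=0$ for all $k$, and then invokes the earlier general bound (Theorem~\ref{a+2}) to get $|\supp(g_k)|\ge d-a_i+1$ for each $k$. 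Summing gives $|\supp(f)|\ge (a_i+1)(d-a_i+1)$, and since $I$ is artinian one has $1\le a_i\le d-1$, whence $(a_i+1)(d-a_i+1)\ge 2d$. No case analysis on configurations of the cobasis is needed.

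Your sharpness example is genuinely wrong, for two independent reasons. First, the cobasis $\{x_1^{d-k}x_2^{k}\}_{k=0}^{d-1}\cup\{x_3^{d-k}x_4^{k}\}_{k=0}^{d-1}$ contains $x_1^d$ and $x_3^d$, so the corresponding ideal generated in degree $d$ is not artinian; adding higher-degree generators would violate the hypothesis that $I$ is generated in degree $d$. Second, and more seriously, this cobasis does not even witness failure of WLP: any $f$ supported there decomposes as $f=p(y_1,y_2)+q(y_3,y_4)$, and $\ell\circ f=0$ forces $(x_1+x_2)\circ p=0$ and $(x_3+x_4)\circ q=0$ separately; hence $p$ is a scalar multiple of $(y_1-y_2)^d$, which has $y_2^d$ in its support, contradicting $k\le d-1$. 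So $f=0$ and surjectivity actually holds. The paper's example is $f=(y_1-y_2)(y_3-y_4)^{d-1}$, whose support consists of the $2d$ monomials $y_i\,y_3^{d-1-k}y_4^{k}$ for $i\in\{1,2\}$ and $0\le k\le d-1$; none of these is a pure power, so the complementary ideal is artinian and generated in degree $d$, and $\ell\circ f=0$ is immediate.
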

In \cite{MMO}, Mezzetti, Mir\'o-Roig and Ottaviani describe a connection
between projective varieties satisfying at least one Laplace equation
and homogeneous artinian ideals generated by polynomials of the same
degree $d$ failing the WLP by failing injectivity of the multiplication
map by a linear form in degree $d-1$. In \cite{MM2}, Mezzetti and Mir\'o-Roig construct a class of examples
of Togliatti systems in three variables of any degree. More precisely,
they consider the action on $S=\mathbb{K}[x,y,z]$ of cyclic group
$\mathbb{Z}/d\mathbb{Z}$ defined by
$[x,y,z]\mapsto [\xi^ax,\xi^by,\xi^cz]$, where $\xi$ is a primitive
$d$-th root of unity and $\gcd(a,b,c,d)=1$. They prove that the ideals
generated by forms of degree $d$ invariant by such actions are all
defined by monomial Togliatti systems of degree $d$. In \cite{MM3}, Colarte, Mezzetti, Mir\'o-Roig and Salat show that in $S=\mathbb{K}[x_1,\dots,x_n]$ the ideal fixed by the action of cyclic group $\mathbb{Z}/d\mathbb{Z}$, defined by $[x_1,\dots ,x_n]\mapsto [\xi^{a_1}x_1,\dots , \xi^{a_n}x_n]$, where $\gcd(a_1,\dots ,a_n,d)=1$ and there are at most $\binom{n+d-2}{n-2}$ fixed monomials is a monomial Togliatti system.

In this article, we generalize the result in \cite{MM3} and in Theorem
\ref{sur}, we prove that these ideals satisfy the WLP if and only if
at least $n-1$ of the integers $a_i$ are equal. In addition, in the
polynomial ring with three variables we give a formula for the number
of fixed monomials and we provide bounds for such numbers.

\section{Preliminaries}
We consider standard graded algebras $S/I$, where $S = \mathbb{K}[x_1,\dots , x_n]$, $I$ is a homogeneous ideal of $S$, $\mathbb{K}$ is a field of characteristic zero and the $x_i$'s all have degree $1$. Our ideal $I$ will be  an artinian monomial ideal generated in a single degree $d$. Given a polynomial $f$ we denote the set of monomials with non-zero coefficients in $f$ by $\supp(f)$. \\
Now let us define the weak and strong Lefschetz properties for
artinian algebras.
\begin{dfn}
  Let $I\subset S$ be a homogeneous artinian ideal. We say that $S/I$
  has the \textit{Weak Lefschetz Property} (WLP) if there is a linear
  form $\ell\in (S/I)_1$ such that, for all integers $j$, the
  multiplication map
$$
\times \ell : (S/I)_j\longrightarrow (S/I)_{j+1}
$$
has maximal rank, i.e. it is injective or surjective. In this case the linear form $\ell$ is called a \textit{Lefschetz element} of $S/I$. If for general linear form $\ell\in (S/I)_1$ and for an integer $j$ the map $\times \ell$ does not have the maximal rank we will say that $S/I$ fails the WLP in degree $j$. \\
We say that $S/I$ has the \textit{Strong Lefschetz Property} (SLP) if
there is a linear form $\ell\in(S/I)_1$ such that, for all integers
$j$ and $k$ the multiplication map
$$
\times \ell^k : (S/I)_j\longrightarrow (S/I)_{j+k}
$$ has maximal rank, i.e. it is injective or surjective. We often abuse the notation and say that $I$ fails or satisfies the WLP or SLP, when we mean that $S/I$ does so.
\end{dfn}
In the case of one variable, the WLP and SLP hold trivially since all
ideals are principal. Harima, Migliore, Nagel and Watanabe in
\cite[Proposition $4.4$]{HMNW}, proved the following result in two
variables.
\begin{prop}\label{SLPcodim2}
  Every artinian ideal in $\mathbb{K}[x,y]$ (char $\mathbb{K}=0$) has
  the Strong Lefschetz property (and consequently also the Weak
  Lefschetz property).
\end{prop}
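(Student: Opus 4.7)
The plan is to reduce to a monomial (lex-segment) ideal by Gr\"obner degeneration and then verify SLP directly by identifying the relevant multiplication matrix with a submatrix of a Pascal matrix. By upper semicontinuity of rank, it suffices to exhibit for each artinian $I$ a single linear form $\ell$ such that every map $\times \ell^k : (S/I)_j \to (S/I)_{j+k}$ has maximal rank.

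First, I would pass from $I$ to its generic initial ideal $J = \mathrm{gin}(I)$ via a flat family. The Hilbert function is preserved along the degeneration, and the rank of the multiplication map by a fixed linear form is lower semicontinuous in a flat family, so SLP of $S/J$ with some $\ell$ implies SLP of $S/I$ with the corresponding pullback of $\ell$. In characteristic zero, $J$ is Borel-fixed, and in two variables the Borel condition is equivalent to $J$ being a lex-segment ideal: in each degree $d$ the piece $J_d$ consists of the lex-largest monomials, so the standard monomial basis of $(S/J)_d$ is a consecutive interval $\{x^{d-r}y^r : r_d\le r\le d\}$ for some threshold $r_d$.

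Second, I would verify SLP for such $J$ with the linear form $\ell = x + y$. Since $(x+y)^k\cdot x^a y^b=\sum_{i=0}^k\binom{k}{i}x^{a+k-i}y^{b+i}$, the matrix of $\times\ell^k:(S/J)_d\to(S/J)_{d+k}$, indexed in each basis by the $y$-exponent, has entries $\binom{k}{s-r}$. Because both bases are consecutive intervals, this matrix is a rectangular block of consecutive rows and columns of the infinite Pascal matrix. Pascal matrices are totally positive in characteristic zero, so any such block has maximal rank (each square sub-block of consecutive rows and columns has strictly positive determinant), which yields SLP for $J$, hence for $I$.

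The hard part is the identification with a Pascal block together with the total positivity input; both rely crucially on the interval shape of the basis of each graded piece of $S/J$, which is the special feature of Borel-fixed ideals in two variables. In three or more variables the staircase becomes two-dimensional and this identification breaks down, which is precisely why SLP is subtle in higher codimension and is the subject of the main theorems of the first part of the paper.
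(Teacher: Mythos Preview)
The paper does not prove this proposition; it is cited from Harima--Migliore--Nagel--Watanabe. Your reduction to $\mathrm{gin}(I)$ and the identification of the multiplication matrix are correct, but the linear-algebra justification has a gap. The matrix with entries $\binom{k}{s-r}$ is a banded Toeplitz matrix, not a block of the Pascal matrix, and it is only totally \emph{nonnegative}: a consecutive $p\times p$ block with top-left corner $(s_0,r_0)$ has determinant zero whenever $s_0-r_0\notin[0,k]$ (for instance with $k=1$ the block on rows $\{2,3\}$ and columns $\{0,1\}$ has rank~$1$). So the parenthetical ``each square sub-block of consecutive rows and columns has strictly positive determinant'' is false, and ``any such block has maximal rank'' does not follow from total positivity alone.

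What makes the argument work is the particular shape of the block that actually arises. Because $J=\mathrm{gin}(I)$ is Borel-fixed, $y^d\notin J$ whenever $(S/J)_d\ne 0$, so the column range is exactly $[r_d,d]$ and the row range is exactly $[r_{d+k},d+k]$; both ranges end at indices differing by $k$. The $\min(p,q)\times\min(p,q)$ submatrix aligned with the bottom-right corner (rows ending at $d+k$, columns ending at $d$) then has $(i,j)$-entry $\binom{k}{k+i-j}$, hence is upper triangular with $1$'s on the diagonal and invertible. This yields maximal rank directly, with no appeal to total positivity. It is also worth noting that the paper runs the implication the other way: Lemma~\ref{Toeplitz} (nonvanishing of maximal minors of $T_{k,m}$) is \emph{deduced} from Proposition~\ref{SLPcodim2}, whereas your route amounts to proving Proposition~\ref{SLPcodim2} from an independent verification of that Toeplitz fact.
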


In a polynomial ring with more than two variables, it is not true in
general that every artinian monomial algebra has the SLP or WLP. Also
it is often rather difficult to determine whether a given algebra
satisfies the SLP or even WLP. One of the main general results in a
ring with more than two variables is proved by Stanley in
\cite{Stanley}.
\begin{thm}\label{CI}
  Let $S=\mathbb{K}[x_1 , \dots ,x_n]$, where
  $char(\mathbb{K})=0$. Let $I$ be an artinian monomial complete
  intersection, i.e $I=(x^{a_1}_1,\dots , x^{a_n}_n)$. Then $S/I$ has
  the SLP.
\end{thm}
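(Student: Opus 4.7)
The plan is to deduce the SLP for the artinian monomial complete intersection from the one-variable case, using the fact that the SLP is preserved under tensor products of graded algebras equipped with compatible $\mathfrak{sl}_2$-module structures. The starting point is the observation that, as graded $\mathbb{K}$-algebras,
\[
S/I \;\cong\; \mathbb{K}[x_1]/(x_1^{a_1}) \otimes_{\mathbb{K}} \cdots \otimes_{\mathbb{K}} \mathbb{K}[x_n]/(x_n^{a_n}).
\]

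First I would dispose of the base case: each factor $A_i:=\mathbb{K}[x_i]/(x_i^{a_i})$ has Hilbert function $(1,1,\dots,1)$ of length $a_i$, so for every $k$ the multiplication $\times x_i^k:(A_i)_j\to(A_i)_{j+k}$ is either an isomorphism or the zero map, which is maximal rank; thus $A_i$ has SLP with Lefschetz element $x_i$. Moreover $A_i$ is Gorenstein with symmetric Hilbert function, so one can define an $\mathfrak{sl}_2$-triple $(E_i,H_i,F_i)$ on $A_i$ by letting $E_i$ be multiplication by $x_i$, letting $H_i$ act on $(A_i)_j$ by the scalar $2j-(a_i-1)$, and setting $F_i(x_i^j)=j(a_i-j)x_i^{j-1}$; a direct computation gives $[E_i,F_i]=H_i$ and $[H_i,E_i]=2E_i$, $[H_i,F_i]=-2F_i$.

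The key step is the tensor product lemma: if $A$ and $B$ are graded Artinian algebras carrying $\mathfrak{sl}_2$-triples in which the raising operators $E_A,E_B$ act as multiplication by Lefschetz elements $\ell_A,\ell_B$, and in which the weight grading agrees with the algebra grading up to a shift, then $A\otimes_{\mathbb{K}}B$ inherits the diagonal $\mathfrak{sl}_2$-triple
\[
E:=E_A\otimes 1+1\otimes E_B,\quad H:=H_A\otimes 1+1\otimes H_B,\quad F:=F_A\otimes 1+1\otimes F_B,
\]
and $E$ coincides with multiplication by $\ell_A+\ell_B$ in the tensor product algebra. Since $\mathrm{char}\,\mathbb{K}=0$, the finite-dimensional $\mathfrak{sl}_2$-module $A\otimes B$ decomposes into a direct sum of irreducible strings, along each of which $E^k$ restricts to an isomorphism from the $(-m)$-weight space to the $(m-2k)$-weight space whenever both lie in the string. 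Summing over the strings, $E^k$ has maximal rank between any pair of weight spaces of $A\otimes B$, which is exactly the SLP for $A\otimes B$ with Lefschetz element $\ell_A+\ell_B$. Iterating this $n-1$ times yields the SLP for $S/I$ with Lefschetz element $\ell=x_1+\cdots+x_n$.

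The main obstacle is the verification of the tensor product step: one must check carefully that the weight space decomposition of the diagonal $\mathfrak{sl}_2$-action coincides with the grading of the tensor product algebra, and that maximal rank of $E^k$ between weight spaces is equivalent to the classical SLP statement about $\times\ell^k$. This is precisely where the complete reducibility of $\mathfrak{sl}_2$-representations, and hence the characteristic zero hypothesis, is used in an essential way. As an alternative (or as a sanity check) one could instead identify $S/I$ with the cohomology ring $H^{2\bullet}(\mathbb{P}^{a_1-1}\times\cdots\times\mathbb{P}^{a_n-1};\mathbb{K})$ via the Künneth formula and invoke the hard Lefschetz theorem for smooth projective varieties, but the $\mathfrak{sl}_2$ argument is more elementary and self-contained.
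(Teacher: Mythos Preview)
The paper does not actually prove this theorem; it is quoted as a known result of Stanley with a citation to \cite{Stanley} and no argument is given. Your proposal is therefore not being compared against anything in the paper itself.

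That said, your proof is correct and is one of the two standard routes to this result. The $\mathfrak{sl}_2$ tensor-product argument you outline (often attributed to Watanabe) works exactly as you describe: each $\mathbb{K}[x_i]/(x_i^{a_i})$ is an irreducible $\mathfrak{sl}_2$-module with $E_i$ given by multiplication by $x_i$, the diagonal action on the tensor product has raising operator equal to multiplication by $x_1+\cdots+x_n$, and complete reducibility in characteristic zero gives the hard Lefschetz decomposition, hence SLP. Your verification of the bracket relations and the identification of weight spaces with graded pieces are fine; the only slightly garbled sentence is the one about ``$E^k$ restricts to an isomorphism from the $(-m)$-weight space to the $(m-2k)$-weight space,'' where you presumably mean that on each irreducible summand $E^k$ is an isomorphism from weight $-k$ to weight $k$, and summing over summands gives maximal rank between any two graded pieces. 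The alternative you mention at the end---identifying $S/I$ with $H^{2\bullet}(\mathbb{P}^{a_1-1}\times\cdots\times\mathbb{P}^{a_n-1};\mathbb{K})$ and invoking the classical hard Lefschetz theorem---is in fact Stanley's original argument, so your ``sanity check'' is the proof the paper is citing.
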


Because of the action of the torus $(\mathbb{K}^*)^n$ on monomial
algebras, there is a canonical linear form that we have to
consider. In fact we have the following result in \cite[Proposition $2.2$]{MMN},
 proved by Migliore, Mir\'o-Roig and Nagel.
\begin{prop}\label{mon} Let $I\subset S$ be an artinian monomial
  ideal. Then $S/I$ has the weak Lefschetz property if and only if
  $x_1+x_2+\cdots +x_n$ is a weak Lefschetz element for $S/I$.
\end{prop}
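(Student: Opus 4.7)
\section*{Proof plan for Proposition \ref{mon}}

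The ``if'' direction is immediate from the definition of WLP, so the whole content is the converse. The plan is to exploit the torus action on $S$ that preserves every monomial ideal, together with the fact that the Lefschetz locus in $(S/I)_1$ is a nonempty Zariski open subset whenever WLP holds.

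First I would set up the Lefschetz locus. For each integer $j$ consider the multiplication map $\mu_{\ell,j}\colon (S/I)_j\to (S/I)_{j+1}$; fixing bases, its matrix entries are linear in the coefficients of $\ell\in (S/I)_1$, so ``maximal rank'' is the non-vanishing of (at least one) appropriate maximal minor and hence defines a Zariski open subset $U_j\subseteq (S/I)_1$. Because $S/I$ is artinian, only finitely many $U_j$ are proper, so $U:=\bigcap_j U_j$ is Zariski open in $(S/I)_1$. By definition, $S/I$ has WLP if and only if $U\neq\emptyset$, and an element $\ell$ is a weak Lefschetz element precisely when $\ell\in U$.

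Next I would introduce the torus action. The group $T=(\mathbb{K}^\ast)^n$ acts on $S$ by $(t_1,\ldots,t_n)\cdot x_i = t_i x_i$, scaling each monomial by the corresponding character. Since $I$ is generated by monomials, $I$ is $T$-stable, so the action descends to algebra automorphisms of $S/I$ and in particular to linear automorphisms of $(S/I)_1$. Because the WLP condition is invariant under algebra automorphisms, the locus $U$ is $T$-invariant as a subset of $(S/I)_1$.

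To finish, I would use that $(S/I)_1$ is a (finite-dimensional) affine space and hence irreducible, so the nonempty open set $U$ is dense. The $T$-orbit of $\mathbf{1}:=x_1+\cdots+x_n$ is $\{t_1x_1+\cdots+t_nx_n:t_i\in\mathbb{K}^\ast\}$, the complement of the union of coordinate hyperplanes, which is itself a nonempty open subset of $(S/I)_1$. Therefore $U$ meets this orbit: some $\ell_0=\sum t_i x_i$ lies in $U$. Applying the torus element $(t_1^{-1},\ldots,t_n^{-1})\in T$ sends $\ell_0$ to $\mathbf{1}$, and since $U$ is $T$-invariant we conclude $\mathbf{1}\in U$, i.e.\ $x_1+\cdots+x_n$ is a weak Lefschetz element.

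The only subtlety I anticipate is a harmless bookkeeping one: if some variable $x_i$ lies in $I$ (which can happen for an arbitrary artinian monomial ideal), one should first pass to the quotient ring in fewer variables, or equivalently restrict $T$ to the subtorus acting on the surviving variables, so that $\mathbf{1}$ is interpreted as the sum of the remaining generators. Once that reduction is made, the argument above goes through verbatim, and the density/openness of the relevant orbit is the key point that makes the specific linear form $x_1+\cdots+x_n$ universally available.
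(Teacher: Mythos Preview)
The paper does not give its own proof of this proposition; it quotes it from \cite[Proposition~2.2]{MMN} and only motivates it with the sentence ``Because of the action of the torus $(\mathbb{K}^*)^n$ on monomial algebras, there is a canonical linear form that we have to consider.'' Your argument is correct and is exactly the torus-action idea the paper alludes to (and the standard proof of the cited result): the Lefschetz locus is a nonempty Zariski open set stable under $(\mathbb{K}^\ast)^n$, and the orbit of $x_1+\cdots+x_n$ is dense, so they meet and $T$-invariance finishes.
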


Let us now recall some facts of the theory of the \textit{inverse
  system}, or \textit{Macaulay duality}, which will be a fundamental
tool in this paper. For a complete introduction, we refer the reader
to \cite{Geramita} and \cite{IK}.

Let $R= \mathbb{K}[y_1,\dots ,y_n]$, and consider $R$ as a graded $S$-module where the action of $x_i$ on $R$ is partial differentiation with respect to $y_i$.\\
There is a one-to-one correspondence between graded artinian algebras
$S/I$ and finitely generated graded $S$-submodules $M$ of $R$, where
$I=Ann_S(M)$ and is the annihilator of $M$ in $S$ and, conversely,
$M=I^{-1}$ is the $S$-submodule of $R$ which is annihilated by $I$
(cf. \cite[Remark 1]{Geramita}), p.17).  Since the map
$\circ \ell : R_{i+1}\longrightarrow R_i$ is dual of the map
$\times \ell : (S/I)_{i}\longrightarrow (S/I)_{i+1}$ we conclude that
the injectivity (resp. surjectivity) of the first map is equivalent to
the surjectivity (resp. injectivity) of the second one. Here by
$"\circ \ell"$ we mean that the linear form $\ell$ acts on $R$.

For a monomial ideal $I$ the inverse system module $(I^{-1})_d$ is
generated by the corresponding monomials of $S_d$ but not in $I_d$ in
the dual ring $R_d$.

Mezzetti, Mir\'o-Roig and Ottaviani in \cite{MMO} describe a relation
between existence of artinian ideals $I\subset S$ generated by
homogeneous forms of degree $d$ failing the WLP and the existence of
projections of the Veronese variety
$V(n-1,d)\subset \mathbb{P}^{\binom{n+d-1}{d}-1}$ satisfying at least one Laplace equation of
order $d-1$.

For an artinian ideal $I\subset S$, they make the following construction. Assume that $I$ is minimally generated by the homogeneous polynomials  $f_1,\dots ,f_r$ of degree $d$ and denote by $I^{-1}$, the inverse system module of $I$. Since $I$ is artinian, the polynomials $f_1,\dots ,f_r$ define a regular morphism
$$
\varphi_{I_d}:\mathbb{P}^{n-1}\longrightarrow \mathbb{P}^{r-1}. 
$$
Denote by $X_{n-1,I_d}$, the closure of the image of ${\varphi}_{I_d}$. 
There is a rational map 
$$
\varphi_{(I^{-1})_d} :\mathbb{P}^{n-1}\dashrightarrow
\mathbb{P}^{\binom{n+d-1}{d}-r-1}
$$
associated to $(I^{-1})_d$. Denote by $X_{n-1,(I^{-1})_d}$, the closure of the image of $\varphi_{(I^{-1})_d}$.

With notations as above, in \cite{MMO}, Theorem $3.2$, Mezzetti,
Mir\'o-Roig and Ottaviani prove the following theorem.
\begin{thm}
  Let $I\subset S$ be an artinian ideal generated by $r$ forms
  $f_1,\dots ,f_r$ of degree $d$. If $r\leq \binom{n+d-2}{n-2}$, then
  the following conditions are equivalent:
  \begin{itemize}
  \item[$(1)$] The ideal $I$ fails the WLP in degree $d-1$,
  \item[$(2)$] The forms $f_1,\dots ,f_r$ become $\mathbb{K}$-linearly
    dependent on a general hyperplane $H$ of $\mathbb{P}^{n-1}$,
  \item[$(3)$]The $n-1$-dimensional variety $X_{n-1,(I^{-1})_d}$
    satisfies at least one Laplace equation of order $d-1$.
  \end{itemize}
\end{thm}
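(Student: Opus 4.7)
The plan is to establish the two equivalences $(1)\Leftrightarrow(2)$ and $(2)\Leftrightarrow(3)$ separately. The hypothesis $r\leq\binom{n+d-2}{n-2}$ will be used once, to pin down which direction of maximal rank could possibly fail.

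First I would fix a general linear form $\ell\in S_1$. Because $I$ is generated in degree $d$, we have $I_{d-1}=0$, so $(S/I)_{d-1}=S_{d-1}$ has dimension $\binom{n+d-2}{n-1}$. Pascal's identity together with the hypothesis gives
\[
\dim (S/I)_d \;=\; \binom{n+d-1}{n-1}-r \;\geq\; \binom{n+d-1}{n-1}-\binom{n+d-2}{n-2} \;=\; \binom{n+d-2}{n-1},
\]
so the target of $\times\ell\colon(S/I)_{d-1}\to(S/I)_d$ has dimension at least that of its source. Hence failure of maximal rank is failure of injectivity, and the kernel is $\{g\in S_{d-1}:\ell g\in I_d\}$. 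Via $g\mapsto \ell g$ this is isomorphic to $I_d\cap\ell S_{d-1}$, which is exactly the kernel of the restriction $I_d\hookrightarrow S_d\twoheadrightarrow (S/\ell S)_d$. This last kernel is nonzero iff $\bar f_1,\dots,\bar f_r$ are $\mathbb{K}$-linearly dependent on the hyperplane $H=V(\ell)$. Since $\ell$ is general on both sides, $(1)\Leftrightarrow(2)$ follows.

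For $(2)\Leftrightarrow(3)$, I would invoke the apolarity dictionary. The perfect pairing $S_d\times R_d\to\mathbb{K}$ identifies $(I^{-1})_d$ with the annihilator of $I_d$, so the map $\varphi_{(I^{-1})_d}$ factors as the Veronese embedding $v_d\colon\mathbb{P}^{n-1}\to\mathbb{P}(R_d)$, $[\ell_p]\mapsto[\ell_p^d]$, followed by the linear projection from $\mathbb{P}(I_d)\subset\mathbb{P}(R_d)$ (read through the apolar identification). A classical jet computation shows that the $(d-1)$st osculating space of $v_d$ at $v_d([p])$ equals $\mathbb{P}(\ell_p\cdot R_{d-1})$, of the expected projective dimension $\binom{n+d-2}{n-1}-1$. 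The projected variety $X_{n-1,(I^{-1})_d}$ satisfies a Laplace equation of order $d-1$ at a general point iff the projection strictly decreases the dimension of this osculating space, iff the center of projection meets $\mathbb{P}(\ell_p\cdot R_{d-1})$. Translating through the pairing, this is equivalent to the existence of a nonzero $g\in S_{d-1}$ with $\ell_p g\in I_d$, i.e.\ condition $(2)$ with $\ell=\ell_p$.

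The main obstacle I anticipate is carrying through the apolarity dictionary without slip-ups, in particular identifying the projection center $\mathbb{P}(I_d)$ with its apolar image in $\mathbb{P}(R_d)$ and justifying the classical Veronese osculating-space calculation. Once these identifications are clean, $(2)\Leftrightarrow(3)$ reduces to the same linear-algebra statement $I_d\cap\ell S_{d-1}\neq 0$ that already underpins $(1)\Leftrightarrow(2)$, so everything is driven by a single geometric condition interpreted in three languages.
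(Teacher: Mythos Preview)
The paper does not contain a proof of this statement: it is quoted verbatim from \cite{MMO}, Theorem~3.2 (Mezzetti, Mir\'o-Roig, Ottaviani), as background in the preliminaries section. There is therefore no proof in the present paper to compare your proposal against.

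That said, your outline is the standard argument and matches the approach of the original source. The dimension count pinning down failure of the WLP as failure of injectivity, the identification of the kernel with $I_d\cap\ell S_{d-1}$, and the apolarity/osculating-space translation for the Laplace-equation side are exactly the ingredients used in \cite{MMO}. The one place to be careful is the implicit claim that the $(d-1)$st osculating space of the \emph{projected} variety equals the projection of the Veronese osculating space; this holds because the projection is linear, so it commutes with taking partial derivatives, but it deserves a sentence since in general osculating spaces need not behave well under morphisms.
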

If $I$ satisfies the three equivalent conditions in the above theorem,
$I$ (or $I^{-1}$) is called a \textit{Togliatti system}.

\section{On the Support of form $f$ annihilated by $\ell$ and its higher powers }
Let $S= \mathbb{K}[x_1,\dots ,x_n]$ be the polynomial ring where
$n\geq 3$ and $\mathbb{K}$ is a field of characteristic zero. In this
section we give some definitions and notations and prove some results
about the number of monomials in the support of polynomials
$f\in (I^{-1})_d$ with $(x_1+\cdots +x_n)^a\circ f=0$ for some
$1\leq a\leq d$.  Now let us define a specific type of well known
integer matrices which we use them throughout this section.
\begin{dfn}
  For a non-negative integer $k$ and positive integer $m$, where
  $k\leq m$, we define the Toeplitz matrix $T_{k,m}$, to be the
  following $(k+1)\times (m+1)$ matrix
$$T_{k,m}=
\begin{bmatrix}
  \binom{m-k}{0} & \binom{m-k}{1}   &  \binom{m-k}{2} & \cdots & \binom{m-k}{m-k}& 0& \cdots  & 0 \\\\
  0 &  \binom{m-k}{0}   &  \binom{m-k}{1}& \cdots  & \binom{m-k}{m-k-1}& \binom{m-k}{m-k} &\cdots  & 0 \\\\
  \vdots&  \vdots &  \vdots & \vdots & \vdots  &\vdots&\vdots &\vdots  \\\\

  0 & 0 &0 & \cdots &\binom{m-k}{m-k-3} & \binom{m-k}{m-k-2} &
  \binom{m-k}{m-k-1} & \binom{m-k}{m-k}
\end{bmatrix}
$$
where the $({i,j})^{th}$ entry of this matrix is $\binom{m-k}{j-i}$
and we use the convention that $\binom{m}{i}=0$ if $i<0$ or $m>i$.
\end{dfn}
We have the following useful lemma which proves the maximal minors of
$T_{k,m}$ are non-zero.
\begin{lem}\label{Toeplitz}
  For each non-negative integer $k$ and positive integer $m$ where
  $k\leq m$, all maximal minors of the Toeplitz matrix $T_{k,m}$ are
  non-zero.
\end{lem}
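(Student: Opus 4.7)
The plan is to reinterpret the Toeplitz matrix $T_{k,m}$ via polynomial multiplication in a single variable. I observe that the $i$-th row of $T_{k,m}$ (for $0 \leq i \leq k$) is exactly the coefficient vector, in the basis $1, t, \dots, t^m$, of the polynomial $(1+t)^{m-k} t^i$, since the coefficient of $t^j$ in $(1+t)^{m-k} t^i$ equals $\binom{m-k}{j-i}$. Thus $T_{k,m}$ is the matrix, in monomial bases, of the $\mathbb{K}$-linear map $\mu : \mathbb{K}[t]_{\leq k} \longrightarrow \mathbb{K}[t]_{\leq m}$ sending $f(t)$ to $(1+t)^{m-k} f(t)$. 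A maximal minor corresponds to a choice of $k+1$ column indices $J = \{j_0 < \cdots < j_k\} \subseteq \{0, 1, \dots, m\}$, and this minor vanishes if and only if there exists a nonzero $f \in \mathbb{K}[t]_{\leq k}$ for which $g(t) := (1+t)^{m-k} f(t)$ has all coefficients at positions in $J$ equal to zero. Since $\deg g \leq m$, such a $g$ would have at most $(m+1) - (k+1) = m-k$ nonzero coefficients.

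The key combinatorial input I would then invoke is the following sparsity lemma: any nonzero polynomial $p \in \mathbb{K}[t]$ with a nonzero root of multiplicity $\mu$ has at least $\mu + 1$ nonzero coefficients. This can be established by induction on the number $r$ of nonzero terms of $p$. After dividing out the highest power of $t$ dividing $p$, one may assume $p(0) \neq 0$; then the derivative $p'$ has exactly $r-1$ nonzero terms, while retaining the same nonzero root with multiplicity at least $\mu - 1$, so induction applies. The base case $r = 1$ is immediate, since a nonzero monomial has no nonzero roots.

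Applying the sparsity lemma to $g(t)$, which by construction has $-1$ as a root of multiplicity at least $m - k$, would force $g$ to have at least $m - k + 1$ nonzero coefficients whenever $g \neq 0$. This contradicts the upper bound above, so $g = 0$; and since $(1+t)^{m-k} \neq 0$, this in turn forces $f = 0$. Hence the $k+1$ selected columns of $T_{k,m}$ are linearly independent and the corresponding maximal minor is nonzero. The main obstacle here is simply to isolate the correct formulation of the sparsity lemma and prove it crisply; once that elementary ingredient is in hand, the rest of the argument is a direct translation between linear dependence of the rows of $T_{k,m}$ and vanishing of coefficients in a polynomial product.
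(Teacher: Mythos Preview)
Your argument is correct and is genuinely different from the paper's. The paper interprets $T_{k,m}$ as the matrix of multiplication by $(x+y)^{m-k}$ from $R_k$ to $R_m$ in $R=\mathbb{K}[x,y]$, then for each choice of $k+1$ columns forms the monomial ideal $J$ generated by the complementary monomials in $R_m$, and invokes the Strong Lefschetz Property in two variables (Proposition~\ref{SLPcodim2}) together with Proposition~\ref{mon} to conclude that $\times(x+y)^{m-k}\colon (R/J)_k\to (R/J)_m$ is bijective. You instead dehomogenize to one variable and replace the SLP citation by a self-contained sparsity lemma (a nonzero polynomial over a field of characteristic zero with a nonzero root of multiplicity $\mu$ has at least $\mu+1$ nonzero terms, proved by differentiating). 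Your route is more elementary and does not rely on an external Lefschetz result; the paper's route is shorter once Proposition~\ref{SLPcodim2} is in hand and highlights why the lemma fits naturally into the Lefschetz framework. One small remark: your inductive step for the sparsity lemma uses that differentiating a monomial $c_it^i$ with $i\ge 1$ gives a nonzero term, so the characteristic-zero hypothesis (assumed throughout the paper) is essential there.
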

\begin{proof}
  Let $R = \mathbb{K}[x,y]$ be the polynomial ring in variables $x$
  and $y$ and choose monomial bases
  $\mathcal{A}:=\lbrace x^jy^{k-j}\rbrace^{k}_{j=0}$ and
  $\mathcal{B}:=\lbrace x^iy^{m-i}\rbrace^{m}_{i=1}$ for the
  $\mathbb{K}$-vector spaces $R_k$ and $R_m$, respectively. Observe
  that $T_{k,m}$ is the matrix representing the multiplication map
  $\times (x+y)^{m-k} : R_k\rightarrow R_m$ with respect to the bases
  $\mathcal{A}$ and $\mathcal{B}$.  Given any square submatrix $M$ of
  size $k+1$, define ideal $J\subset R$ generated by the subset of
  monomials in $\mathcal{B}$, called $\mathcal{B}^\prime$,
  corresponding to the columns of $T_{k,m}$ not in $M$. Therefore,
  $\mathcal{A}$ and $\mathcal{B}\setminus \mathcal{B}^\prime$ form
  monomial bases for $(R/J)_k$ and $(R/J)_m$, respectively and $M$ is
  the matrix representing the multiplication map
  $\times (x+y)^{m-k} : (R/J)_{k} \rightarrow (R/J)_{m}$ with respect
  to $\mathcal{A}$ and $\mathcal{B}\setminus \mathcal{B}^\prime$.
  Since by Proposition \ref{SLPcodim2}, any monomial $R$-algebra has
  the SLP, and by Proposition \ref{mon} $x+y$ is a Lefschetz element
  for $R/J$, the multiplication map by $x+y$ is a bijection and
  therefore the matrix $M$ has non-zero determinant. This implies that
  all the maximal minors of $T_{k,m}$ are non-zero.
\end{proof}
Consider a non-zero homogeneous polynomial $f$ of degree $d$ in the
dual ring $R=\mathbb{K}[y_1,\dots ,\allowbreak y_n]$ where we have
$(x_1+\cdots +x_n)\circ f=0$. We use the following notations and
definitions to prove some properties of such polynomial $f$.
\begin{dfn}\label{def:nu} For an ideal $I$ of $S$, we denote the
  Hilbert function of $S/I$ in degree $d$ by
  $H_{S/I}(d) := \dim_\mathbb{K}(S/I)_d$, and the set of all artinian
  monomial ideals of $S$ generated in a single degree $d$ by
  $\mathcal{I}_d$. In addition, for an artinian ideal $I$ we define
  $\phi (I,d) : \times (x_1+\dots+x_n):(S/I)_{d-1} \rightarrow
  (S/I)_{d}$ and
$$
\nu(n,d): = min \lbrace H_{(S/I)}(d) \mid \phi(I,d) \hspace*{0.1
  cm}\text{is not surjective, for}\hspace*{1 mm}
I\in\mathcal{I}_d\rbrace.
$$
\end{dfn}
\begin{dfn}\label{def:deg/level}
  In a polynomial ring $S=\mathbb{K}[x_1,\dots , x_n]$, for any
  monomial $m$ and variable $x_i$, we define
  $$\deg_i(m):= max \lbrace e \mid {x^e_i}\vert m \rbrace $$Define the
  set $\mathcal{M}_d$ to be the set of monomials of degree $d$ in $R$
  and denote the set of monomials of degree $k$ with respect to the
  variable $y_i$ by,
  \begin{equation*} {\mathcal{L}}^k_{i,d} := \lbrace m\in
    \mathcal{M}_d \mid \deg_i(m)=k \rbrace \subset \mathcal{M}_d.
  \end{equation*}
\end{dfn}
\begin{lem}\label{SLP}
  Let $f$ be a form of degree $d\geq 2$ in the dual ring
  $R=\mathbb{K}[y_1,\dots ,y_n]$ of $S=\mathbb{K}[x_1, \dots ,x_n]$
  and let the linear forms $\ell:=x_1+\cdots +x_n$ and
  $\ell^\prime := \ell-x_j$ for $1\leq j\leq n$.  Write
  $f=\sum^d_{i=0}y^i_jg_i$, where $g_i$ is a polynomial of degree
  $d-i$ in the variables different from $y_j$, then for every
  $0\leq c\leq d$, we have
  \begin{equation}\label{equal}
    \ell^c\circ f= \sum^{d-c}_{k=0}\sum^{c}_{i=0}\dfrac{(k+c-i)!}{k!}\binom{c}{i}y^k_j{\ell^\prime}^i\circ g_{k+c-i}.
  \end{equation}
  In particular, $\ell^c\circ f=0$ if and only if,
  \begin{equation}\label{polys}
    \sum^{c}_{i=0}\dfrac{(k+c-i)!}{k!}\binom{c}{i}{\ell^\prime}^i\circ g_{k+c-i}=0, \hspace*{ 1 cm} 0\leq k\leq d-c.
  \end{equation}

\end{lem}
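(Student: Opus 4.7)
The plan is to expand $\ell^c$ via the binomial theorem around the decomposition $\ell = x_j + \ell'$ and then exploit the fact that, viewed as differential operators on $R$, the variable $x_j$ acts only on the $y_j$-factors while $\ell'$ acts only on the $g_i$'s.

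First I would write
\[
\ell^c = (x_j + \ell')^c = \sum_{i=0}^{c} \binom{c}{i} x_j^{c-i} {\ell'}^i,
\]
using that $x_j = \partial/\partial y_j$ commutes with $\ell' = \sum_{m\neq j} \partial/\partial y_m$. Applying this to $f = \sum_{m=0}^{d} y_j^m g_m$ and using that $x_j^{c-i}$ acts trivially on the $g_m$ (since none of them involve $y_j$) while ${\ell'}^i$ acts trivially on the $y_j^m$ factors, each term of the double sum becomes
\[
\binom{c}{i}\, \bigl(x_j^{c-i}\circ y_j^m\bigr)\,\bigl({\ell'}^i\circ g_m\bigr) = \binom{c}{i}\,\frac{m!}{(m-c+i)!}\, y_j^{m-c+i}\,\bigl({\ell'}^i\circ g_m\bigr),
\]
valid whenever $m\geq c-i$ (and zero otherwise).

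Next I would reindex with $k := m - c + i$, so that $m = k+c-i$, turning the double sum into
\[
\ell^c\circ f = \sum_{i=0}^{c}\sum_{k\geq 0} \binom{c}{i}\, \frac{(k+c-i)!}{k!}\, y_j^k\, \bigl({\ell'}^i\circ g_{k+c-i}\bigr).
\]
To cut the $k$-sum off at $d-c$, I would observe that $g_{k+c-i}$ has degree $d-k-c+i$ in the variables $y_m$ with $m\neq j$, so ${\ell'}^i\circ g_{k+c-i}=0$ whenever $i > d-k-c+i$, equivalently $k > d-c$. This kills every term with $k>d-c$ regardless of $i$ and yields the claimed identity \eqref{equal}.

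For the ``in particular'' statement I would use that each inner coefficient $\sum_{i=0}^{c}\binom{c}{i}\frac{(k+c-i)!}{k!}\,{\ell'}^i\circ g_{k+c-i}$ is a polynomial in the variables $y_1,\dots,\widehat{y_j},\dots,y_n$ (i.e.\ not involving $y_j$), so the right-hand side of \eqref{equal} is a presentation of $\ell^c\circ f$ in the direct-sum decomposition $R = \bigoplus_k y_j^k\,\mathbb{K}[y_1,\dots,\widehat{y_j},\dots,y_n]$. Hence $\ell^c\circ f = 0$ if and only if each $y_j^k$-component vanishes, which gives \eqref{polys}. The only non-routine part is the bookkeeping to confirm that the natural range $m\le d$ in the original sum indeed collapses exactly to $0\le k\le d-c$; this is forced by the degree constraint on $g_{k+c-i}$ as shown above, so there is no real obstacle.
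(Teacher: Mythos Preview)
Your argument is correct, but it is not the paper's argument. The paper proves \eqref{equal} by induction on $c$: the cases $c=0,1$ are done directly, and for the inductive step one writes $\ell^c\circ f=\ell\circ(\ell^{c-1}\circ f)$, plugs in the formula for $c-1$, applies $\ell=x_j+\ell'$ term by term, shifts the $k$-index, and then combines the two resulting sums using $\binom{c-1}{i}+\binom{c-1}{i-1}=\binom{c}{i}$. Your route is more direct: you expand $\ell^c=(x_j+\ell')^c$ once via the binomial theorem (legitimate since these differential operators commute), observe that $x_j^{c-i}$ hits only the $y_j^m$ factor while ${\ell'}^i$ hits only $g_m$, and reindex. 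This avoids the bookkeeping of the inductive step entirely and makes transparent why the falling factorial $(k+c-i)!/k!$ and the binomial coefficient $\binom{c}{i}$ appear. The degree argument you give to cut the $k$-sum off at $d-c$ is exactly right and is the one nontrivial point; the paper's induction hides this inside the index ranges. Both handle the ``in particular'' the same way, by reading off the $y_j^k$-components.
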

\begin{proof}
  We prove the lemma using induction on $c$.
  For $c=0$ the equality (\ref{equal}) is trivial. For $c=1$, we have
  \begin{equation}\label{equal 1}
    \ell\circ f =  \sum_{k=0}^{d} {ky_j^{k-1} g_k+y_j^k \ell^{\prime}\circ g_k}
    = \sum_{k=0}^{d-1} {(k+1)y_j^k g_{k+1} + y_j^k \ell^{\prime}\circ g_k}.
  \end{equation}
  Assume the equality holds for $c-1$
  then we have $\ell^c\circ f = \ell\circ (\ell^{c-1}\circ f)$ and
  \begin{align*}
    \ell\circ (\ell^{c-1}\circ f) &= \ell\circ \left(\sum^{d-c+1}_{k=0}\sum^{c-1}_{i=0}\dfrac{(k+c-1-i)!}{k!}\binom{c-1}{i}y^k_j{\ell^\prime}^i\circ g_{k+c-1-i}\right)\\
                                  &= \sum^{d-c+1}_{k=0}\sum^{c-1}_{i=0}\dfrac{(k+c-1-i)!}{k!}\binom{c-1}{i}\left( ky^{k-1}_j{\ell^\prime}^i\circ g_{k+c-1-i}+y^{k}_j{\ell^\prime}^{i+1} g_{k+c-1-i}\right)\\
                                  &= \sum^{d-c}_{k=0}\sum^{c-1}_{i=0}\dfrac{(k+c-i)!}{(k+1)!}\binom{c-1}{i}(k+1)y^{k}_j{\ell^\prime}^i\circ g_{k+c-i}\\ &\hspace*{1.5 cm}+\dfrac{(k+c-1-i)!}{k!}\binom{c-1}{i}y^{k}_j{\ell^\prime}^{i+1}\circ  g_{k+c-1-i}\\
                                  &= \sum^{d-c}_{k=0}\sum^{c}_{i=0}\left(\dfrac{(k+1)(k+c-i)!}{(k+1)!}\binom{c-1}{i}+\dfrac{(k+c-i)!}{k!}\binom{c-1}{i-1}\right) y^k_j{\ell^\prime}^i\circ g_{k+c-i}\\
                                  & = \sum^{d-c}_{k=0}\sum^{c}_{i=0}\dfrac{(k+c-i)!}{k!}\binom{c}{i}y^k_j{\ell^\prime}^i\circ g_{k+c-i}.
  \end{align*}
\end{proof}
Using the above lemma the following proposition gives properties about
the form $f$.
\begin{prop}\label{prop:levels}
  Let $f$
  be a non-zero form of degree $d$
  in the dual ring $R=\mathbb{K}[y_1,
  \dots , y_n]$ of $S=\mathbb{K}[x_1, \dots ,
  x_n]$ such that $({x_1}+\dots + {x_n})\circ f
  =0$. Then the following conditions hold:
  \begin{itemize}
  \item[(i)] If $y^d_i\notin
    \supp(f)$, then the sum of the coefficients of
    $f$
    corresponding to the monomials in ${\mathcal{L}}^k_{i,d}
    \cap \supp(f)$ is zero; for each $0\leq k \leq d-1$.
  \item[(ii)] If $a=max\lbrace
    \deg_i(m)\mid m\in
    \supp(f)\rbrace$, then ${\mathcal{L}}^k_{i,d}\cap \supp(f)\neq
    \emptyset$; for all $0\leq k\leq a$.
  \end{itemize}
\end{prop}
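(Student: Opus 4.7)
The plan is to apply Lemma \ref{SLP} in the case $c=1$, with $j=i$. Writing $f = \sum_{k=0}^{d} y_i^k g_k$ with $g_k\in\mathbb{K}[y_m : m\neq i]$ homogeneous of degree $d-k$, the hypothesis $(x_1+\cdots+x_n)\circ f = 0$ becomes, via (\ref{polys}), the recursion
\[
(k+1)\,g_{k+1} + \ell'\circ g_k = 0, \qquad 0\leq k\leq d-1,
\]
where $\ell' = \sum_{m\neq i} x_m$. Observe that $\mathcal{L}^k_{i,d}\cap\supp(f)\neq\emptyset$ if and only if $g_k\neq 0$, and that the sum of coefficients of $f$ on the monomials in $\mathcal{L}^k_{i,d}$ is precisely $g_k(1,1,\ldots,1)$.

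Part (ii) then follows directly from the recursion: if $g_k=0$ then $g_{k+1} = -\frac{1}{k+1}\,\ell'\circ g_k = 0$, so the set of indices $k$ with $g_k\neq 0$ is downward closed in $\{0,1,\ldots,d\}$. Since $a$ is by definition the maximum of this set, we conclude that $g_k\neq 0$ for every $0\leq k\leq a$, as required.

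For part (i), let $\varepsilon$ denote evaluation at the all-ones point. Since $g_k$ is homogeneous of degree $d-k$, the Euler identity $\sum_{m\neq i} y_m\,\partial_{y_m} g_k = (d-k)\,g_k$ gives, after evaluating at $(1,\ldots,1)$,
\[
\varepsilon(\ell'\circ g_k) = (d-k)\,\varepsilon(g_k).
\]
Applying $\varepsilon$ to the recursion yields $(k+1)\,\varepsilon(g_{k+1}) = -(d-k)\,\varepsilon(g_k)$, which iterates to $\varepsilon(g_k) = (-1)^k\binom{d}{k}\,\varepsilon(g_0)$ for $0\leq k\leq d$. The hypothesis $y_i^d\notin\supp(f)$ is exactly $g_d=0$, so $\varepsilon(g_d)=0$; since $\binom{d}{d}=1$, this forces $\varepsilon(g_0)=0$ and hence $\varepsilon(g_k)=0$ for all $0\leq k\leq d-1$, which is the claim.

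The only subtle point — and therefore the main conceptual obstacle — is recognizing that Lemma \ref{SLP} at $c=1$ already suffices, and that the two assertions of the proposition (non-emptiness of $\mathcal{L}^k_{i,d}\cap\supp(f)$ on the one hand, and vanishing of coefficient sums on the other) translate cleanly into purely algebraic statements about the level polynomials $g_k$: part (ii) into a \emph{propagation} property of the recursion, and part (i) into the behavior of the scalar sequence $\varepsilon(g_k)$ under the same recursion combined with the Euler identity.
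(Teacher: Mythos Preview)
Your proof is correct and follows essentially the same approach as the paper's: both invoke Lemma~\ref{SLP} at $c=1$ to obtain the recursion $(k+1)g_{k+1}+\ell'\circ g_k=0$, deduce part~(ii) by propagating nonvanishing downward, and deduce part~(i) from the scalar recursion $(k+1)\varepsilon(g_{k+1})=-(d-k)\varepsilon(g_k)$ together with $g_d=0$. The only cosmetic difference is that the paper obtains this scalar recursion by applying $(\ell')^{d-k-1}$ to each equation (using that $(\ell')^m\circ g = m!\,g(1,\dots,1)$ for $g$ homogeneous of degree $m$), whereas you obtain it via the Euler identity; the resulting recursions are identical.
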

\begin{proof}
  Write the form $f$
  as, $f
  = \sum_{k=0}^{d} {y_i^{k}g_k}$, where $g_k$ is a degree
  $d-k$
  polynomial in variables different from $y_i$.
  Denote $\ell
  =x_1+\dots + x_n$ and $\ell ^{\prime}=\ell - x_i$.  Since $\ell\circ
  f=0$, Lemma \ref{SLP} implies that
  \begin{equation}\label{Eq:2'} {(k+1)y_i^k g_{k+1} + y_i^k
      \ell^{\prime}\circ g_k} = 0 , \hspace*{1 cm} \forall
    \hspace*{0.2 cm}0\leq k \leq d-1.
  \end{equation}
  To show $(i)$
  we act each equation by $(\ell^{\prime})^{d-k-1}$ and we get that
  \begin{equation}\label{Eq:2}
    (k+1)(d-k-1)! g_{k+1}(1,\dots , 1) + (d-k)! g_k(1,\dots , 1) =0 \hspace*{ 1 cm} \forall \hspace*{0.2 cm}0\leq k \leq d-1.
  \end{equation}
  Since we assumed $g_{d}
  =0$ we get that $g_k(1,\dots , 1)= 0$ for all $0\leq k\leq
  d-1$, which implies that for all $0\leq k\leq
  d-1$ sum of the coefficients of
  $f$
  corresponding to the monomials in ${\mathcal{L}}^d_{i,d}\cap
  \supp(f)$ is zero and proves part $(i)$.

  To show part $(ii)$, note that
  $a=max\lbrace \deg_i(m)\mid m\in \supp(f)\rbrace$ implies that
  $g_a\neq 0$. Using Equation (\ref{Eq:2'}) recursively we get that
  $g_{j}\neq 0$ for all $0\leq j\leq a$, which means that
  ${\mathcal{L}}^j_{i,d}\cap \supp(f)\neq \emptyset$ for all
  $0\leq j\leq a$.
\end{proof}

In the following theorem we provide a bound for the number of
monomials with non-zero coefficients in the non-zero form in the
kernel of the map
$\circ (x_1+\cdots +x_n)^{d-a}\colon (I^{-1})_d\longrightarrow
(I^{-1})_{a}$.
In particular it provides a bound on the number of generators for an
equigenerated monomial ideal in $S$ failing the WLP.
\begin{thm}\label{a+2}
  Let $f\neq 0$ be a form of degree $d$ in the dual ring
  $R=\mathbb{K}[y_1,\dots ,y_n]$ of the ring
  $S=\mathbb{K}[x_1,\dots ,x_n]$. If for the linear form
  $\ell:=x_1+\cdots +x_n$ we have $\ell^{d-a}\circ f=0$ for some
  $0\leq a \leq d-1$, then $\vert \supp(f)\vert\geq a+2$.
\end{thm}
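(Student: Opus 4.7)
The plan is to apply a linear change of coordinates on $R$ that turns $\ell$ into a single partial derivative, translate the hypothesis into a multiplicity condition at a point for an auxiliary polynomial in $n-1$ variables, and conclude via multivariate Lagrange interpolation.

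First, I would introduce new coordinates $z_1 := y_1$ and $z_i := y_i - y_1$ for $2 \le i \le n$, so that $y_1 = z_1$ and $y_i = z_1 + z_i$ for $i \ge 2$. A direct chain-rule computation gives $\partial/\partial z_1 = \sum_{i=1}^n \partial/\partial y_i$, so the operator $\ell = x_1 + \cdots + x_n$ acts on $R$ as $\partial/\partial z_1$ in the new coordinates. Hence $\ell^{d-a} \circ f = 0$ is equivalent to saying that $f$, expressed as a polynomial in $z_1, \ldots, z_n$, has $z_1$-degree at most $d-a-1$.

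Next, I would expand $f = \sum_\alpha c_\alpha z_1^{\alpha_1}\prod_{i \ge 2}(z_1+z_i)^{\alpha_i}$ via the binomial theorem and collect coefficients. A short calculation shows that the coefficient of $z_1^{d-|\gamma|} z_2^{\gamma_2}\cdots z_n^{\gamma_n}$ in $f$ equals
\[
\tilde c_\gamma = \sum_{\alpha:\,|\alpha|=d} c_\alpha \prod_{i \ge 2}\binom{\alpha_i}{\gamma_i},
\]
so the hypothesis becomes $\tilde c_\gamma = 0$ for every $\gamma = (\gamma_2,\ldots,\gamma_n) \in \mathbb{Z}_{\ge 0}^{n-1}$ with $|\gamma| \le a$. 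Setting $F(w_2,\ldots,w_n) := \sum_\alpha c_\alpha w_2^{\alpha_2}\cdots w_n^{\alpha_n}$, a polynomial in $n-1$ variables whose monomials are in bijection with the degree-$d$ monomials of $R$ via $\alpha_1 = d - \sum_{i \ge 2}\alpha_i$ (so $|\supp(F)| = |\supp(f)|$), one checks $\partial^\gamma F(1,\ldots,1) = \gamma!\,\tilde c_\gamma$. Thus the condition is exactly that $F$ vanishes to order at least $a+1$ at the point $(1,\ldots,1)$, and the theorem reduces to showing that a nonzero such $F$ has at least $a+2$ monomials in its support.

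Finally, I would argue by contradiction: assume $|\supp(F)| = s \le a+1$ with distinct support multi-indices $\beta^{(1)},\ldots,\beta^{(s)}$ and nonzero coefficients $c_1,\ldots,c_s$. Since $\{w^\gamma : |\gamma| \le a\}$ is a basis of $\mathbb{K}[w_2,\ldots,w_n]_{\le a}$, the vanishing conditions on $F$ are equivalent to the single statement that $\sum_{r=1}^s c_r\, P(\beta^{(r)}) = 0$ for every polynomial $P$ of total degree $\le a$. Because the $\beta^{(r)}$ are distinct points in $\mathbb{K}^{n-1}$ and $s-1 \le a$, the standard Lagrange construction—taking, for each $r$, a product of $s-1$ affine linear forms $L_{r,r'}$ with $L_{r,r'}(\beta^{(r')}) = 0$ and $L_{r,r'}(\beta^{(r)}) \ne 0$, then normalizing—yields polynomials $P_r$ of degree $\le s-1$ with $P_r(\beta^{(r')}) = \delta_{rr'}$. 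Plugging $P = P_r$ into the displayed identity forces $c_r = 0$ for each $r$, contradicting $F \ne 0$.

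The main obstacle is the coordinate-change bookkeeping: verifying carefully that $\ell$ becomes $\partial/\partial z_1$, computing $\tilde c_\gamma$ via the binomial expansion, and identifying $\tilde c_\gamma$ with the normalized derivative $\partial^\gamma F(1,\ldots,1)/\gamma!$. After this translation, multivariate Lagrange interpolation closes the argument uniformly. An alternative inductive approach through Lemma \ref{SLP}, applied to the slices $g_i$ of $f$ along a chosen variable, is conceivable and works transparently when $d - a = 1$, but the recursion among the $g_i$'s becomes entangled for $d - a \ge 2$, whereas the coordinate-change proof handles every value of $d - a$ in a single stroke.
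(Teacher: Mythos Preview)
Your proof is correct and takes a genuinely different route from the paper's.

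The paper argues by induction on the number of variables $n$. It expands $f=\sum_i y_j^i g_i$ along a chosen variable, applies Lemma~\ref{SLP} to obtain the family of linear systems (\ref{T}), observes that the coefficient matrices are Toeplitz with nonvanishing maximal minors (Lemma~\ref{Toeplitz}), and then splits into cases according to whether all terms in each system vanish. In the nontrivial case one finds at least $j+1$ nonzero slices $g_{d-i}$, each of which is annihilated by a suitable power of $\ell'$, so the induction hypothesis bounds $|\supp(g_{d-i})|$ from below and a product count finishes the argument.

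Your approach bypasses all of this machinery. After the coordinate change turning $\ell$ into $\partial/\partial z_1$, the hypothesis becomes a pure order-of-vanishing statement for the dehomogenized polynomial $F$ at a single point, and Lagrange interpolation on at most $a+1$ distinct support exponents gives the contradiction in one step. This is shorter, avoids any induction or case analysis, and treats every value of $d-a$ uniformly. The paper's approach, on the other hand, develops tools---the Toeplitz minor lemma and the level decomposition of $\supp(f)$---that are reused later in the paper (e.g.\ in Proposition~\ref{level mons} and Theorem~\ref{max}), so its added overhead is amortized across the section. One small point worth making explicit in your write-up is why the products $\prod_{i\ge 2}\binom{w_i}{\gamma_i}$ with $|\gamma|\le a$ span the same space as the monomials $w^\gamma$ with $|\gamma|\le a$; this is what converts the system $\tilde c_\gamma=0$ into the statement $\sum_r c_r P(\beta^{(r)})=0$ for all $P$ of degree $\le a$, and follows since each such product has leading monomial $w^\gamma/\gamma!$.
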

\begin{proof}

  For a variable $y_j$ write $f=\sum^{d}_{i=0}y^i_jg_i$ such that
  $g_i$ is a polynomial of degree $d-i$ in the variables different
  from $y_j$. Since for some $1\leq a\leq d-1$ we have
  $\ell^{d-a}\circ f=0$ from Lemma \ref{SLP} we have that
  \begin{equation}\label{origin}
    \sum^{d-a}_{i=0}\dfrac{(k+d-a-i)!}{k!}\binom{d-a}{i}{\ell^\prime}^i\circ g_{k+d-a-i}=0, \hspace*{ 1 cm} 0\leq k\leq a.
  \end{equation}
  For every $j$ with $1\leq j\leq a+1$ we act each equation in the above
  system by $(\ell^\prime)^{j-k-1}$, so we have
  \begin{equation}\label{prev}
    \sum^{d-a}_{i=0}\dfrac{(k+d-a-i)!}{k!}\binom{d-a}{i}{\ell^\prime}^{i+j-k-1}\circ g_{k+d-a-i}=0, \hspace*{ 1 cm} 0\leq k\leq a,\hspace*{0.5 cm}
  \end{equation}
  equivalently for each $j$ with $1\leq j\leq a+1$ we have that
  \begin{equation}\label{T}
    \sum^{d-k}_{i=a-k}\dfrac{(d-i)!}{k!}\binom{d-a}{i-(a-k)}{\ell^\prime}^{i+j-(a+1)}\circ g_{d-i}=0, \hspace*{ 1 cm} 0\leq k\leq j-1. \hspace*{0.5 cm}
  \end{equation}
  Note that for $k\geq j$ the equations in (\ref{prev}) are zero.

  For each $0\leq j\leq a+1$ the coefficient matrix of the system in
  (\ref{T}) in the forms
  $(d-i)!{\ell^\prime}^{i+j-(a+1)}\circ g_{d-i}$ is the Toeplitz
  matrix $T_{(j-1)\times(d-a+j-1)}$ up to multiplication of $k$-th row
  by $\frac{1}{k!}$. Using Lemma \ref{Toeplitz} we get that all the
  maximal minors of this coefficient matrix are non-zero. This implies
  that in each system of equations either all the terms are zero or
  there are at least $j+1$ non-zero terms.
  \\Now we want to prove the statement by induction on the number of variables $n$. Suppose $n=2$ then each $g_i$ is a monomial of degree $d-i$ in one variable. In (\ref{T}) consider the corresponding system of equations for $j=a+1$. If for every $0\leq i \leq d$ we have that ${\ell^\prime}^{i}\circ g_{d-i}=0$ implies that for every $0\leq i\leq d$ we have $g_{d-i}=0$ which contradicts the assumption that $f\neq 0$. Therefore for at least $a+2$ indices $0\leq i \leq d$ we have ${\ell^\prime}^{i}\circ g_{d-i}\neq 0$ which means $\vert \supp(f)\vert \geq a+2$.\\
  Now we assume that the statement is true for the forms $f$ in
  polynomial rings with $n-1$ ($n\geq 3$) variables and we prove it
  for the form with $n$ variables.  \\We divide it into two cases,
  suppose in the system of equations for every $1\leq j\leq a+1$ all
  terms are zero. In this case for each $1\leq j\leq a+1$, letting
  $i=a-j+1$ implies that
  $(\ell^\prime)^{a-j+1+j-(a+1)}\circ g_{d-(a-j+1)}=g_{d-a+j-1}=0$ for
  all $1\leq j\leq a+1$. Since we assume that $f\neq 0$ there exists
  $a+1\leq i\leq d$ such that $g_{d-i}\neq 0$, but considering $j=1$
  in (\ref{T}) with the assumption that all terms in this equation is
  zero we get that $(\ell^\prime)^{i-a}\circ g_{d-i}=0$. Using the
  induction hypothesis on the polynomial $g_{d-i}$ in $n-1$ variables
  we get that
  $\vert \supp(f)\vert \geq \vert \supp(g_{d-i})\vert \geq
  d-(d-i)-(i-a)+2=a+2$ as we wanted to prove.

  Now we assume that there exists $1\leq j\leq a+1$ such that there
  are at least $j+1$ indices $0\leq i\leq d$ such that
  ${\ell^\prime}^{i+j-(a+1)}\circ g_{d-i}\neq 0$ in the corresponding
  system of equations in \ref{T}. We take the largest index $j$ with
  this property and we get that for these $j+1$ indices we have that
  ${\ell^\prime}^{i+j-(a+1)+1}\circ g_{d-i}=0$. Now using the
  induction hypothesis in these polynomials we get that
  $\vert \supp(g_{d-i})\vert \geq d-(d-i)-(i+j-(a+1)+1)+2=a+2-j$,
  therefore
$$
\vert\supp(f)\vert\geq \sum^d_{i=0}\vert\supp(g_i)\vert \geq
(j+1)(a+2-j)\geq a+2.
$$
\end{proof}
\section{Bounds on the number of generators of ideals with three
  variables failing WLP }
In this section we consider artinian monomial ideals
$I\subset S=\mathbb{K}[x_1,x_2,x_3]$ generated in a single degree
$d$. In \cite{MM}, Mezzetti and Mir\'o-Roig provided a sharp lower
bound for the number of generators of such ideals failing the WLP by
failing \textit{injectivity} of the multiplication map on the algebra in degree
$d-1$. Here we prove a sharp upper bound for the number of generators
of such ideals failing the WLP by failing \textit{surjectivity} in degree $d-1$
equivalently we provide a sharp lower bound for the number of
generators of $(I^{-1})_d$ where the map
$\circ \ell\colon (I^{-1})_d\longrightarrow (I^{-1})_{d-1}$ is not
injective, where $\ell = x_1+x_2+x_3$.

First we prove an easy but interesting result. Recall that every
polynomial in at most two variables factors as a product of linear
forms over an algebraically closed field. Here we note that the same
statement holds in three variables if the polynomial vanishes by the
action of a linear form on the dual ring. This in some cases
corresponds to the failure of WLP. Note that for the WLP, the
assumption on the field to be algebraically closed is not necessary,
but in order to factor the form as a product of linear forms we need
to have this assumption on the field. In addition the statement does
not necessarily hold in polynomial rings with more than three
variables.
\begin{lem}\label{factor}
  Let $S=\mathbb{K}[x_1,x_2,x_3]$ and $S/I$ be an artinian algebra
  over an algebraically closed field $\mathbb{K}$. Let $f$ be a form
  in the kernel of the map
  $\circ \ell :(I^{-1})_i\longrightarrow (I^{-1})_{i-1}$ for a linear
  form $\ell$ and integer $i$, then $f$ factors as a product of linear
  forms each of which is annihilated by $\ell$.
\end{lem}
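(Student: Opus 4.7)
The plan is to reduce to the classical fact that a binary form over an algebraically closed field factors into linear forms, after using the contraction pairing to turn $\circ\ell$ into a single partial derivative.

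First I would change coordinates. Extend $\ell$ to a basis $\ell_1 := \ell, \ell_2, \ell_3$ of $S_1$, and let $m_1, m_2, m_3 \in R_1$ be the dual basis with respect to the contraction pairing, so that $\ell_j \circ m_k = \delta_{jk}$ and the action of $\ell_j$ on $R = \mathbb{K}[m_1,m_2,m_3]$ is $\partial/\partial m_j$. In these coordinates the hypothesis $\ell \circ f = 0$ becomes $\partial f/\partial m_1 = 0$, so $f$ has no $m_1$-dependence and is in fact a homogeneous form of degree $i$ in the two variables $m_2, m_3$.

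Since $\mathbb{K}$ is algebraically closed, any binary form of degree $i$ factors into $i$ linear forms, giving
\[
f = c \prod_{j=1}^{i} (\alpha_j m_2 + \beta_j m_3)
\]
for some $c \in \mathbb{K}^\times$ and $(\alpha_j, \beta_j) \in \mathbb{K}^2 \setminus \{0\}$. Absorbing $c$ into one of the factors produces an expression of $f$ as a product of linear forms in $R$. Each such factor $\alpha_j m_2 + \beta_j m_3$ is killed by $\ell = \ell_1$ because $\ell_1 \circ m_2 = \ell_1 \circ m_3 = 0$ by construction of the dual basis.

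There is no real obstacle here; the only point to keep in mind is that the argument uses in an essential way that after removing one variable we are left with exactly two variables, where factoring is automatic. This is precisely why the statement fails for $n \geq 4$: after the same reduction one would be left with a form in $n-1 \geq 3$ variables, which in general does not split into linear factors. The role of $(I^{-1})_i$ in the hypothesis is purely to locate $f$; the factorization takes place in the ambient ring $R$ and uses only that $\ell \circ f = 0$.
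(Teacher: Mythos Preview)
Your proof is correct and follows essentially the same approach as the paper: both arguments change coordinates so that $\ell$ acts as differentiation with respect to a single variable, reduce $f$ to a binary form in the remaining two variables, and then invoke the factorization of binary forms over an algebraically closed field. Your version is slightly more explicit about the dual-basis construction, but the idea is identical.
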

\begin{proof}
  By a linear change of variables we consider
  $S=\mathbb{K}[x^\prime_1,x^\prime_2,x^\prime_3]$ and
  $R=\mathbb{K}[y^\prime_1,y^\prime_2,y^\prime_3]$ simultaneously in
  such a way that $x^\prime_1=\ell$. Then we have that
  $\ell\circ f(y_1,y_2,y_3)=x^\prime_1\circ
  f(y^\prime_1,y^\prime_2,y^\prime_3)=0$
  where this implies that $f$ is a polynomial in two variables
  $y^\prime_2$ and $y^\prime_3$. Using the fact that any polynomial in
  two variables over an algebraically closed field factors as a
  product of linear forms we conclude that $f$ factors as a product of
  linear forms in $y^\prime_2$ and $y^\prime_3$. Hence all of them are
  annihilated by $\ell= x^\prime_1$.
\end{proof}
The next proposition provides a bound for the number of non-zero terms
in each homogeneous component with respect to one of the variables for
a non-zero form $f$, where $\ell \circ f=0$.
\begin{prop}\label{level mons}
Let $f$ be a non-zero form of degree
  $d\geq 2$ in the dual ring $R=\mathbb{K}[y_1,y_2,y_3]$ of
  $S=\mathbb{K}[x_1,x_2,x_3]$ such that
  $(x_1+x_2 +x_3)\circ f =0$. Then we have
  $$\vert {\mathcal{L}}^k_{i,d}\cap \supp(f) \vert \geq d-a_i+1,\hspace*{1cm} \forall\hspace*{0.2 cm} 0\leq k \leq a_i ,\hspace*{0.2 cm} 1\leq i\leq 3,$$
  where $a_i=max \lbrace \deg_i(m) \mid m \in \supp(f) \rbrace$.
\end{prop}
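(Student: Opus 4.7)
The plan is to fix $i\in\{1,2,3\}$, say $i=1$, and denote the other two variables $y_j,y_l$; set $\ell':=x_j+x_l$. Expand $f=\sum_{k=0}^{a_i}y_i^k g_k$ with $g_k\in\mathbb{K}[y_j,y_l]_{d-k}$ and $g_{a_i+1}=0$. Applying Lemma~\ref{SLP} with $c=1$ gives the recursion
\[
\ell'\circ g_k \;=\; -(k+1)\,g_{k+1},\qquad 0\le k\le d-1,
\]
and Proposition~\ref{prop:levels}(ii) shows $g_k\neq 0$ for every $0\le k\le a_i$.

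Iterating the recursion, $(\ell')^{a_i-k+1}\circ g_k=0$ while $(\ell')^{a_i-k}\circ g_k$ is a nonzero scalar multiple of $g_{a_i}$. I would then change coordinates by $u=y_j+y_l$, $v=y_j-y_l$ on $\mathbb{K}[y_j,y_l]$: under this change the contraction by $\ell'$ acts as $2\,\partial_u$, so the two conditions above translate into $\deg_u g_k=a_i-k$ exactly. Writing
\[
g_k \;=\; \sum_{p=0}^{a_i-k}\beta_p\,u^p v^{d-k-p},
\]
each monomial that occurs has $v$-degree $\ge d-a_i$, hence
\[
(y_j-y_l)^{d-a_i}\,\bigm|\, g_k.
\]
Factor accordingly as $g_k=(y_j-y_l)^{d-a_i}\,h_k$ with $h_k\in\mathbb{K}[y_j,y_l]_{a_i-k}$ nonzero.

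To conclude, consider the multiplication map
\[
\times(y_j-y_l)^{d-a_i}\colon\mathbb{K}[y_j,y_l]_{a_i-k}\longrightarrow\mathbb{K}[y_j,y_l]_{d-k}.
\]
Its matrix in the monomial bases differs from the Toeplitz matrix $T_{a_i-k,\,d-k}$ only by row and column sign flips coming from the $(-1)^p$ in the binomial expansion of $(y_j-y_l)^{d-a_i}$, so by Lemma~\ref{Toeplitz} all its maximal minors are nonzero. If $|\supp g_k|\le d-a_i$, then $g_k$ would vanish at a set $Z$ of at least $(d-k+1)-(d-a_i)=a_i-k+1$ positions in the target monomial basis; the $Z$-rows form a square $(a_i-k+1)\times(a_i-k+1)$ submatrix $M_Z$ satisfying $M_Z h_k=0$, forcing $\det M_Z=0$, a contradiction. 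Hence $|\mathcal{L}^k_{i,d}\cap\supp(f)|=|\supp g_k|\ge d-a_i+1$.

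The main obstacle is establishing the divisibility $(y_j-y_l)^{d-a_i}\mid g_k$; the change of variables to $u,v$ is the clean route because it converts the iterated annihilation condition into a transparent degree bound in a single variable. The argument is intrinsically three-variable: for $n\ge 4$ the $g_k$ would live in a polynomial ring in $\ge 3$ variables, where the kernel of $\circ\ell'$ in each degree ceases to be one-dimensional and the factorization step fails.
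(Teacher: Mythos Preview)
Your proof is correct and follows essentially the same route as the paper's: expand $f$ in powers of $y_i$, use the recursion from Lemma~\ref{SLP} to obtain $(\ell')^{a_i-k+1}\circ g_k=0$, pass to the $(u,v)$-coordinates to extract the factor $(y_j-y_l)^{d-a_i}$ from each $g_k$, and then invoke Lemma~\ref{Toeplitz} on the resulting Toeplitz-type matrix to rule out $a_i-k+1$ or more vanishing coefficients. The paper phrases the final step as a linear system in the coefficients $\mu_j$ of $h_k$ rather than as the matrix of the multiplication map $\times(y_j-y_l)^{d-a_i}$, but the content is identical.
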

\begin{proof}
  Write $f = \sum_{k=0}^{a_i}{y^{k}_ig_k}$, where $g_k$ is a degree
  $d-k$ polynomial in two variables different from $y_i$. Let
  $\ell^{\prime}=\ell - x_i$, then we have
  \begin{align*}
    0=\ell\circ f = \ell\circ (\sum_{k=0}^{a_i}{y^{k}_ig_k})
    =\sum_{k=0}^{a_i}ky^{k-1}_ig_k+y^k_i\ell^\prime\circ g_k
  \end{align*}
  therefore
  \begin{equation}\label{Eq:3}
    (k+1)g_{k+1}+\ell^\prime\circ g_{k}=0, \hspace*{1cm} \forall \hspace*{0.2 cm} 0\leq k\leq a_i
  \end{equation}

  after linear change of variables to $u:=y_\alpha+y_\beta$ and
  $v:=y_\alpha-y_\beta$, Equation (\ref{Eq:3}) implies that,
  $(\partial/{\partial u})^{a_i-k+1}\circ g_k=0$ for any
  $0\leq k\leq a_i$.  Therefore, for each $0\leq k \leq a_i$ we have
  \begin{align*}
    g_k=&\sum_{j=0}^{a_i-k}{\lambda_j u^{j}v^{a_i-k+j}}=v^{d-a_i}\sum_{j=0}^{a_i-k}\lambda_j u^j v^{a_i-k+j} &\lambda_j\in \mathbb{K}.
  \end{align*}

  Rewriting $g_k$ in the variables $y_\alpha$ and $y_\beta$ we get
  that
  \begin{align*}
    g_k=&(y_\alpha-y_\beta)^{d-a_i}\sum_{j=0}^{a_i-k}\lambda_j(y_\alpha-y_\beta)^j(y_\alpha+y_\beta)^{a_i-k-j}\\=&(\sum_{s=0}^{d-a_i}(-1)^s\binom{d-a_i}{s}y^s_\alpha y^{d-a_i-s}_\beta)( \sum_{t=0}^{a_i-k}\lambda_t(y_\alpha-y_\beta)^t(y_\alpha+y_\beta)^{a_i-k-t})
  \end{align*}
  where the second sum is a polynomial of degree $a_i-k$ in the
  variables $y_\alpha$ and $y_\beta$, and since any such polynomial is
  of the form $\sum_{j=0}^{a_i-k}\mu_jy^j_\alpha y^{a_i-k-j}_\beta$
  for some $\mu_j\in \mathbb{K}$. So we have
  \begin{align*}
    g_k = & (\sum_{s=0}^{d-a_i}(-1)^s\binom{d-a_i}{s}y^s_\alpha y^{d-a_i-s}_\beta)(\sum_{j=0}^{a_i-k}\mu_jy^j_\alpha y^{a_i-k-j}_\beta) \\=&  \sum_{s=0}^{d-a_i}\sum_{j=0}^{a_i-k}(-1)^s\mu_j\binom{d-a_i}{s}y^{s+j}_{\alpha}y^{d-k-s-j}_{\beta}\\=&  \sum_{j=0}^{a_i-k}\sum_{l=j}^{j+d-a_i}(-1)^{l-j}\mu_j\binom{d-a_i}{l-j}y^l_\alpha y^{d-k-l}_\beta.
  \end{align*}
  We claim that $g_k$ has at most $a_i-k$ coefficients that are
  zero. Suppose $a_i-k+1$ coefficients in the above expression of
  $g_k$ are zero and consider the system of equations in the
  parameters $\mu_j$ corresponding to these coefficients being
  zero. Observe that the coefficient matrix of this system of
  equations is the transpose of a square submatrix of maximal rank of
  the Toeplitz matrix $T_{(a_i-k+1)\times (d-k+1)}$, up to
  multiplication of every second row and every second column by
  negative one. Using Lemma \ref{Toeplitz} we get that the determinant
  of this coefficient matrix is non-zero and this implies that all the
  parameters $\mu_j$ are zero hence $g_k$ is zero. Therefore for all
  $0\leq k\leq a_i$ the polynomial $g_k$ has at most
  $(a_i-k+1)-1=a_i-k$ zero terms. So we have
  $\vert {\mathcal{L}}^k_{i,d}\cap \supp(f) \vert=\vert
  \supp(g_k)\vert\geq (d-k+1)-(a_i-k)=d-a_i+1$
  for all $0\leq k\leq a_i$ and all $1\leq i\leq 3$.

\end{proof}
Now we are able to state and prove the main theorem of this section. Recall from  Definition~\ref{def:nu} that $\phi (I,d) : \times (x_1+x_1+x_3):(S/I)_{d-1} \rightarrow
  (S/I)_{d}$.
\begin{thm}\label{3d-3}
  For $d\geq 2$ we have that
  \[ \nu(3,d) = \left\{ \begin{array}{ll}
                          3d-3& \mbox{if $d$ is odd }\\
                          3d-2 & \mbox{if $d$ is even} .\end{array}
                                 \right. \]
                              Where, 
$
\nu(3,d) = min \lbrace H_{(S/I)}(d) \mid \phi(I,d) \hspace*{0.1
  cm}\text{is not surjective, for}\hspace*{1 mm}
I\in\mathcal{I}_d\rbrace,
$ and $\mathcal{I}_d$ is the set of all artinian monomial ideals of $S$ generated in degree $d$.
                               \end{thm}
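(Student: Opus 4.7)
The plan is to establish both the sharpness bound and the matching lower bound on $\nu(3,d)$.

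For sharpness, I would exhibit the explicit form $f_0 := (y_1-y_2)(y_1-y_3)^{d-2}(y_2-y_3) \in R_d$ and take $I$ to be the monomial ideal of $S$ generated by all degree-$d$ monomials whose duals do not lie in $\supp(f_0)$. Since each linear factor of $f_0$ is annihilated by $\ell=x_1+x_2+x_3$ and $\ell$ acts as a derivation on $R$, we have $\ell\circ f_0=0$; since $y_i^d\notin\supp(f_0)$ for each $i$, also $x_i^d\in I$ and so $I$ is artinian. Then $(I^{-1})_d=\mathrm{span}(\supp(f_0))$ contains $f_0$, so $\phi(I,d)$ is not surjective and $H_{S/I}(d)=|\supp(f_0)|$. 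To count $|\supp(f_0)|$, I rewrite $f_0 = u^{d-1}v-u^{d-2}v^2$ with $u=y_1-y_3$, $v=y_2-y_3$, and group monomials by $y_2$-degree $\beta\in\{0,1,2\}$. The levels $\beta=0$ and $\beta=2$ each contribute $d-1$ nonzero monomials; at $\beta=1$ the coefficient of $y_1^\alpha y_2 y_3^{d-1-\alpha}$ is proportional to $\binom{d-2}{\alpha-1}-\binom{d-2}{\alpha}$, which vanishes precisely when $\alpha=(d-1)/2$, an integer only when $d$ is odd. Summing the three levels yields $|\supp(f_0)|=3(d-1)=3d-3$ for odd $d$ and $3d-2$ for even $d$.

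For the lower bound, let $I\in\mathcal{I}_d$ have $\phi(I,d)$ non-surjective and pick a nonzero $f\in(I^{-1})_d$ with $\ell\circ f=0$; then $|\supp(f)|\leq H_{S/I}(d)$. Artinianness and monomial generation in degree $d$ force $x_i^d\in I$, hence $y_i^d\notin\supp(f)$ for every $i$. By Lemma~\ref{factor}, $f$ factors as a product of linear forms annihilated by $\ell$, so $f=F(u,v)$ for a degree-$d$ binary form $F$ in $u,v$. The three absent-pure-power conditions translate to $F(1,0)=F(0,1)=F(1,1)=0$, giving $F=uv(u-v)\,G(u,v)$ for some binary form $G$ of degree $d-3$.

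Now I apply Proposition~\ref{level mons}: $|\supp(f)|\geq (a_i+1)(d-a_i+1)$ for each $i=1,2,3$, with $a_i=\max\{\deg_i(m):m\in\supp(f)\}$. From $uv(u-v)\mid F$ one has $a_1,a_2\geq 2$ directly; and $a_3\leq 1$ would give $(\partial_u+\partial_v)^2F=0$, forcing $(u-v)^{d-1}\mid F$, which together with $\gcd(uv,(u-v)^{d-1})=1$ and $\deg F=d$ yields $F=0$; hence $a_3\geq 2$ as well. Since $(a+1)(d-a+1)\geq 3d-3$ exactly when $a\in[2,d-2]$, the bound $|\supp(f)|\geq 3d-3$ follows whenever some $a_i\in[2,d-2]$, which settles the odd case for $d\geq 5$. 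The remaining configurations (all $a_i=d-1$ when $d\geq 5$, and the whole small case $d=4$) I would handle by using Proposition~\ref{prop:levels}(i) to force the six monomials $y_i^{d-1}y_j$ with $i\neq j$ into $\supp(f)$ as opposite-sign pairs at the top levels, combined with the interior-level estimates, to reach $3d-3$. For the $+1$ improvement required when $d$ is even, one shows that attaining $|\supp(f)|=3d-3$ would force the binomial identity $\binom{d-2}{\alpha-1}=\binom{d-2}{\alpha}$ at a middle level $\beta=1$, which holds only for odd $d$; thus $|\supp(f)|\geq 3d-2$ whenever $d$ is even.

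The main obstacle will be the boundary case analysis: pinning down the configurations $a_i=d-1$ and the small case $d=4$, where Proposition~\ref{level mons} alone is insufficient and one must combine the three variable-level decompositions, together with the parity-dependent $+1$ refinement. Both reduce to understanding the precise binomial-coefficient coincidences that can occur in the interior levels of the expansion.
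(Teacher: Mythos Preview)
Your overall skeleton matches the paper's, and your use of Lemma~\ref{factor} to write $f=F(u,v)$ with $uv(u-v)\mid F$ is a genuinely different and rather clean route to $a_i\ge 2$ for all $i$; the paper never invokes Lemma~\ref{factor} here, and instead argues only $\min_i a_i\ge 2$ via a level count. The sharpness computation and the interior estimate $(a_i+1)(d-a_i+1)\ge 3d-3$ for $a_i\in[2,d-2]$ agree with the paper.

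There is, however, a real gap in your even-$d$ ``$+1$'' step. Your claim that $|\supp(f)|=3d-3$ forces the coincidence $\binom{d-2}{\alpha-1}=\binom{d-2}{\alpha}$ at the $\beta=1$ level is a statement about the particular form $f_0$, not about an arbitrary $f$ in the kernel; nothing in your argument pins $f$ down to $f_0$. The paper closes this gap when the smallest $a_i$ equals $2$ by invoking the SLP of the complete intersection $(x_i^{3},x_j^{d},x_k^{d})$ (Theorem~\ref{CI}): the kernel of $\circ\ell$ on $(J^{-1})_d$ is then one-dimensional, so $f$ \emph{is} a scalar multiple of the explicit form, whose support one counts directly. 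Your factorization in fact offers an alternative uniqueness argument you did not exploit: if, say, $a_2=2$ then the $v$-degree of $F$ is exactly $2$, which forces $G=c\,u^{d-3}$ and hence $f=c\,f_0$; but without making this explicit, your binomial-identity claim is unsupported.

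The remaining boundary configurations also need more than you indicate. The paper treats the four triples $(d-1,d-1,d-1)$, $(d-2,d-1,d-1)$, $(d-2,d-2,d-1)$, $(d-2,d-2,d-2)$ separately. For $(d-1,d-1,d-1)$ Proposition~\ref{level mons} only yields $|\mathcal L^k_{i,d}\cap\supp(f)|\ge 2$, and the levels for different variables overlap, so one cannot simply add; the paper partitions $\supp(f)$ into the three disjoint ``high'' regions $\bigcup_{k>d/2}\mathcal L^k_{i,d}$, and for even $d$ uses Proposition~\ref{prop:levels}(i) at the \emph{middle} level $k=d/2$ (not the top level) to rule out a three-monomial configuration and gain the extra unit. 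Your sketch of forcing the six $y_i^{d-1}y_j$ into $\supp(f)$ and combining with interior estimates does not by itself reach $3d-3$, let alone $3d-2$.
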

                               \begin{proof}
                                First of all we observe that for $f=(y_1-y_2)(y_1-y_3)(y_2-y_3)^{d-2}$ we have $\ell\circ f=0$, where $\ell=x_1+x_2+x_3$ and since $\vert \supp(f)\vert = 3d-3$ for odd $d$ and $\vert \supp(f)\vert=3d-2$, we have $\nu(3,d)\leq 3d-3$ for odd $d$, and $\nu(3,d) \leq 3d-2$  for even $d$. \\
                                 To prove the equality,
                                 we check that for any $f\in (I^{-1})_d$ where, $\ell\circ f=0$, $\vert \supp(f)\vert \geq 3d-3$ for odd $d$ and $\vert \supp(f)\vert \geq 3d-2$ for even  $d$.\\We start by showing that $\vert \supp(f)\vert \geq 3d-3$ for all $d\geq 3$. Set $a_i=max \lbrace \deg_i(m) \mid m \in \supp(f) \rbrace $ for $1\leq i\leq 3$. Without loss of generality, we may assume that $a_1\leq a_2\leq a_3$. We can see $a_1\geq 2$. In fact by using Proposition \ref{level mons} we get that $\vert {\mathcal{L}}^0_{1,d}\cap \supp(f)\vert \geq d-{a_1}-1$. On the other hand since $I$ is an artinian ideal generated in degree $d$ we have $y^d_i\notin \supp(f)$ for each  $1\leq i\leq 3$ and this implies that $\vert {\mathcal{L}}^0_{1,d}\cap \supp(f)\vert \leq d-1$ and therefore, $a_1\geq 2$.\\
                                 Write $f=\sum_{j=0}^{a_1} y^j_1g_j$,
                                 where $g_j$ is a polynomial of degree
                                 $d-j$ in the variables $y_2$ and
                                 $y_3$. Using Proposition \ref{level
                                   mons} we get,
                                 $\vert {\mathcal{L}}^j_{1,d}\cap
                                 \supp(f) \vert \geq d-{a_1}+1$
                                 for all $0\leq j \leq
                                 a_1$. Therefore,
$$
\vert \supp(f) \vert \geq \sum_{j=0}^{a_1} \vert
\mathcal{L}^j_{1,d}\cap \supp(f) \vert \geq (a_1+1)(d-a_1+1).
$$ 
So
$\vert \supp(f) \vert\geq (a_1+1)(d-a_1+1) = 3(d-1) + (a_1-2)(d-2-a_1)
\geq 3d-3$,
for $ 2\leq a_1\leq d-2$.  Furthermore, strict inequality holds for
$ 2<a_1< d-2$, which means $\vert\supp(f)\vert\geq 3d-2$, for all
$ 2<a_1< d-2$. It remains to consider the cases where $a_1=2$ and
$a_1\geq d-2$.  \\If $a_1=2$, ideal
$J = (x^3_1, x^{d}_2,x^{d}_3)\subset S$ is an artinian monomial
complete intersection and by Theorem \ref{CI}, $J$ has strong
Lefschetz property. The Hilbert series of $S/J$ shows that there is a
unique generator for the kernel of the differentiation map
$\circ (x_1+x_2+x_3):(J^{-1})_{d}\longrightarrow (J^{-1})_{d}$. Since the
polynomial $(y_1-y_2)(y_1-y_3)(y_2-y_3)^{d-2}$ is in the kernel of
this map and has $3d-2$ non-zero terms for even degree $d$ we have
$\vert \supp(f)\vert \geq 3d-2$ for any homogeneous degree $d$ form
$f$ where $\ell\circ f=0$.

Now suppose that $a_1\geq d-2$, then since $a_1\leq a_2\leq a_3\leq d-1$, all possible choices for the triple $(a_1,a_2,a_3)$ are  $(d-1,d-1,d-1)$, $(d-2, d-1, d-1),(d-2, d-2, d-1)$ and $(d-2, d-2, d-2)$. \\
First, we consider the case $(a_1,a_2,a_3)=(d-1,d-1,d-1)$. Proposition
\ref{level mons} implies that
$\vert \mathcal{L}_{i,d}^k\cap \supp(f)\vert \geq d-(d-1)+1=2$ for all
$0\leq k\leq d-1$ and $1\leq i\leq 3$.  \\ If $d$ is odd, we consider
$\supp(f)= \sqcup_{i=1}^3 (\cup_{(d+1)/2}^{d-1}
\mathcal{L}^k_{i,d}\cap \supp(f)) $
as a partition for $\supp(f)$. Therefore,
\begin{align*}
  \vert \supp(f)\vert = & \vert  \cup_{(d+1)/2}^{d-1} \mathcal{L}^k_{1,d}\cap \supp(f)\vert +\vert \cup_{(d+1)/2}^{d-1} \mathcal{L}^k_{2,d}\cap \supp(f)\vert + \vert \cup_{(d+1)/2}^{d-1} \mathcal{L}^k_{3,d}\cap \supp(f)\vert \\   \geq & 3\times 2\times (d-1-(d+1)/2+1) =3d-3.
\end{align*}
If $d$ is even, we consider, $\supp(f)=(\sqcup_{i=1}^3\mathcal{A}_i)\sqcup \mathcal{B}\sqcup\mathcal{C}$ be a partition for $\supp(f)$ where $\vert \mathcal{A}_i\cap \mathcal{L}^k_{i,d}\vert =2$, for each $1\leq i\leq 3$ and ${(d-2)}/2\leq k\leq d-1$ and since each pair of the sets $\mathcal{L}^{d/2}_{i,d}$ for $1\leq i\leq 3$ has intersection in two monomials we get $\vert(\cup_{i=1}^3 \mathcal{L}^{d/2}_{i,d})\cap \supp(f)\vert\geq 3\times 2 - 3=3$, so we can choose $\vert \mathcal{B}\cap (\cup_{i=1}^3 \mathcal{L}^{d/2}_{i,d})\vert=3$. Note that $\vert (\sqcup_{i=1}^3\mathcal{A}_i)\sqcup \mathcal{B}\vert = 3\times 2\times {(d-2)}/2 +3 = 3d-3$. If $\vert \mathcal{C}\vert=0$, the set  $\mathcal{B}\cap (\cup_{i=1}^3 \mathcal{L}^{d/2}_{i,d})$ contains exactly three monomials in the pairwise intersection of $\mathcal{L}^{d/2}_{i,d}$ for $1\leq i\leq 3$. Using Proposition \ref{prop:levels} part $(i)$, the sum of the coefficients of $f$ corresponding to the monomials in $\mathcal{L}^{d/2}_{i,d}\cap \supp(f)$ is zero for each $1\leq i\leq 3$, which implies  the sum of the coefficients of each pair of the monomials in  $\mathcal{B}\cap (\cup_{i=1}^3 \mathcal{L}^{d/2}_{i,d})$ is zero and this means all of them have to be zero. So $\vert \mathcal{C}\vert \geq 1$, so $\vert \supp(f)\vert \geq 3d-2$.(see Figure \ref{fig1})\\

\definecolor{uuuuuu}{rgb}{0.26666666666666666,0.26666666666666666,0.26666666666666666}
\definecolor{uququq}{rgb}{0.25098039215686274,0.25098039215686274,0.25098039215686274}
\begin{figure}
  \begin{tikzpicture}
    [line cap=round,line join=round,>=triangle 45,x=0.6cm,y=0.6cm]
    \clip(-1.,-2.15) rectangle (11.,10.); \fill[line
    width=1.2pt,color=uququq,fill=uququq,fill
    opacity=0.10000000149011612]
    (0.4896504848090458,0.8480995176399997) --
    (1.98464150588864,3.4374999230091303) -- (4.,0.) --
    (0.9793009696180911,0.) -- cycle; \fill[line
    width=1.2pt,color=uququq,fill=uququq,fill
    opacity=0.10000000149011612] (4.492032596627272,7.780428686613983)
    -- (5.507967403372732,7.780428686613983) --
    (6.987582851133786,5.217659555428066) --
    (3.012417148866218,5.217659555428066) -- cycle; \fill[line
    width=1.2pt,color=uququq,fill=uququq,fill
    opacity=0.10000000149011612]
    (8.011836611624666,3.4436000024143736) --
    (9.49417148302896,0.8761206913110575) -- (9.,0.) --
    (6.0236732232493315,0.) -- cycle; \draw [line width=1.2pt]
    (9.,0.)-- (9.49417148302896,0.8761206913110586); \draw [line
    width=1.2pt] (8.007327664869802,0.)--
    (9.003663832434903,1.7257048636412096); \draw [line width=1.2pt]
    (7.023776169101887,0.)-- (8.511888084550945,2.577485444906408);
    \draw [line width=1.2pt] (6.0236732232493315,0.)--
    (8.011836611624666,3.4436000024143745); \draw [line width=2.pt]
    (5.040575355193507,-0.009233209950459086)--
    (7.522953075721945,4.290371125581811); \draw [line width=1.2pt]
    (0.9793009696180911,0.)-- (0.4896504848090458,0.8480995176399997);
    \draw [line width=1.2pt] (2.001338878616366,0.)--
    (1.0006694393081834,1.7332103104632341); \draw [line width=1.2pt]
    (2.993613526177205,0.)-- (1.4968067630886037,2.5925453627821717);
    \draw [line width=1.2pt] (3.969283011777278,0.)--
    (1.98464150588864,3.4374999230091303); \draw [line width=2.pt]
    (5.040575355193507,-0.009233209950459086)--
    (2.517622279471564,4.360649702312117); \draw [line width=1.2pt]
    (4.492032596627272,7.780428686613984)--
    (5.507967403372733,7.780428686613984); \draw [line width=1.2pt]
    (3.991880895975648,6.914140529593392)--
    (6.010399308523301,6.910191099549572); \draw [line width=1.2pt]
    (3.5125688495178418,6.083947712448658)--
    (6.487431150482164,6.083947712448658); \draw [line width=1.2pt]
    (3.0124171488662173,5.217659555428064)--
    (6.987582851133785,5.217659555428064); \draw [line width=2.pt]
    (2.517622279471564,4.360649702312117)--
    (7.522953075721945,4.290371125581811); \draw [line
    width=1.2pt,color=uququq]
    (0.4896504848090458,0.8480995176399997)--
    (1.98464150588864,3.4374999230091303); \draw [line
    width=1.2pt,color=uququq] (1.98464150588864,3.4374999230091303)--
    (4.,0.); \draw [line width=1.2pt,color=uququq] (4.,0.)--
    (0.9793009696180911,0.); \draw [line width=1.2pt,color=uququq]
    (0.9793009696180911,0.)-- (0.4896504848090458,0.8480995176399997);
    \draw [line width=1.2pt,color=uququq]
    (4.492032596627272,7.780428686613983)--
    (5.507967403372732,7.780428686613983); \draw [line
    width=1.2pt,color=uququq] (5.507967403372732,7.780428686613983)--
    (6.987582851133786,5.217659555428066); \draw [line
    width=1.2pt,color=uququq] (6.987582851133786,5.217659555428066)--
    (3.012417148866218,5.217659555428066); \draw [line
    width=1.2pt,color=uququq] (3.012417148866218,5.217659555428066)--
    (4.492032596627272,7.780428686613983); \draw [line
    width=1.2pt,color=uququq] (8.011836611624666,3.4436000024143736)--
    (9.49417148302896,0.8761206913110575); \draw [line
    width=1.2pt,color=uququq] (9.49417148302896,0.8761206913110575)--
    (9.,0.); \draw [line width=1.2pt,color=uququq] (9.,0.)--
    (6.0236732232493315,0.); \draw [line width=1.2pt,color=uququq]
    (6.0236732232493315,0.)-- (8.011836611624666,3.4436000024143736);
    \draw (1.1225001355992835,1.8085874015531886) node[anchor=north
    west] {$\mathcal{A}_2$}; \draw
    (4.1187098108281765,7.037175865882396) node[anchor=north west]
    {{${\mathcal{A}_1}$}}; \draw
    (7.586684987260156,1.8444701521547322) node[anchor=north west]
    {$\mathcal{A}_3$}; \draw (4.644299437933579,9.738675284494299)
    node[anchor=north west] {$y_1^d$}; \draw
    (9.990829277563579,0.7679876341084276) node[anchor=north west]
    {$y_3^d$}; \draw (-1.0457614041025956,0.7500462588076559)
    node[anchor=north west] {$y_2^d$}; \draw (3.644299437933579,-0.5)
    node[anchor=north west] {$d$ is even}; \draw [line width=1.2pt]
    (0.,0.)-- (5.,8.660254037844387); \draw [line width=1.2pt]
    (5.,8.660254037844387)-- (10.,0.); \draw [line width=1.2pt]
    (10.,0.)-- (0.,0.); \begin{scriptsize} \draw [fill=black]
      (2.517622279471564,4.360649702312118) circle (2.5pt); \draw
      [fill=black] (7.522953075721946,4.29037112558181) circle
      (2.5pt); \draw [fill=black] (5.04590615144389,0.) circle
      (2.5pt); \draw [fill=uuuuuu] (0.,0.) circle (2.0pt); \draw
      [fill=uuuuuu] (5.,8.660254037844387) circle (2.0pt); \draw
      [fill=uuuuuu] (10.,0.) circle (2.0pt); \end{scriptsize}
  \end{tikzpicture}
  \begin{tikzpicture}[line cap=round,line join=round,>=triangle
    45,x=0.66cm,y=0.66cm]
    \clip(-1.,-2.) rectangle (10.,9.); \fill[line
    width=1.2pt,color=uququq,fill=uququq,fill
    opacity=0.10000000149011612] (3.997275263841643,6.923483848812012)
    -- (5.017949192431124,6.897114317029975) --
    (6.018393472388334,5.164293994002423) --
    (3.008715016258719,5.211247273655521) -- cycle; \fill[line
    width=1.2pt,color=uququq,fill=uququq,fill
    opacity=0.10000000149011612]
    (0.502666688159133,0.8706442431639989) --
    (1.5066964479989235,2.6096747995176934) -- (3.018421220600845,0.)
    -- (1.0022306950835727,0.) -- cycle; \fill[line
    width=1.2pt,color=uququq,fill=uququq,fill
    opacity=0.10000000149011612] (7.523478768496922,2.5574097914175)
    -- (8.517209438086388,0.8362177826491061) --
    (8.030408142892549,0.) -- (6.053742279355587,0.) -- cycle; \draw
    [line width=1.2pt] (0.,0.)-- (4.5,7.794228634059948); \draw [line
    width=1.2pt] (4.5,7.794228634059948)-- (9.,0.); \draw [line
    width=1.2pt] (9.,0.)-- (0.,0.); \draw [line width=1.2pt]
    (3.997275263841643,6.923483848812012)--
    (5.017949192431124,6.897114317029975); \draw [line width=1.2pt]
    (3.5059085006822355,6.072411649869255)--
    (5.510517614366132,6.043960784034516); \draw [line width=1.2pt]
    (3.008715016258719,5.211247273655521)--
    (6.018393472388334,5.164293994002423); \draw [line width=1.2pt]
    (2.5167258979619582,4.359097123994515)--
    (6.5162395692966,4.301999259807449); \draw [line width=1.2pt]
    (4.036965511460288,0.00777062574749973)--
    (2.0207259421636903,3.5); \draw [line width=1.2pt]
    (7.022953179664346,3.4243455417638518)--
    (5.039679077626302,-0.010785968301580214); \draw [line
    width=1.2pt,color=uququq] (3.997275263841643,6.923483848812012)--
    (5.017949192431124,6.897114317029975); \draw [line
    width=1.2pt,color=uququq] (5.017949192431124,6.897114317029975)--
    (6.018393472388334,5.164293994002423); \draw [line
    width=1.2pt,color=uququq] (6.018393472388334,5.164293994002423)--
    (3.008715016258719,5.211247273655521); \draw [line
    width=1.2pt,color=uququq] (3.008715016258719,5.211247273655521)--
    (3.997275263841643,6.923483848812012); \draw [line width=1.2pt]
    (1.5066964479989235,2.6096747995176934)-- (3.018421220600845,0.);
    \draw [line width=1.2pt] (1.0145976718395366,1.7573347168671714)--
    (2.0216585484887846,0.); \draw [line width=1.2pt]
    (0.502666688159133,0.8706442431639989)-- (1.0022306950835727,0.);
    \draw [line width=1.2pt] (7.523478768496922,2.5574097914175)--
    (6.053742279355587,0.); \draw [line width=1.2pt]
    (8.021324865405187,1.6951150572225253)-- (7.046922501193663,0.);
    \draw [line width=1.2pt] (8.517209438086388,0.8362177826491061)--
    (8.030408142892549,0.); \draw [line width=1.2pt,color=uququq]
    (0.502666688159133,0.8706442431639989)--
    (1.5066964479989235,2.6096747995176934); \draw [line
    width=1.2pt,color=uququq]
    (1.5066964479989235,2.6096747995176934)-- (3.018421220600845,0.);
    \draw [line width=1.2pt,color=uququq] (3.018421220600845,0.)--
    (1.0022306950835727,0.); \draw [line width=1.2pt,color=uququq]
    (1.0022306950835727,0.)-- (0.502666688159133,0.8706442431639989);
    \draw [line width=1.2pt,color=uququq]
    (7.523478768496922,2.5574097914175)--
    (8.517209438086388,0.8362177826491061); \draw [line
    width=1.2pt,color=uququq] (8.517209438086388,0.8362177826491061)--
    (8.030408142892549,0.); \draw [line width=1.2pt,color=uququq]
    (8.030408142892549,0.)-- (6.053742279355587,0.); \draw [line
    width=1.2pt,color=uququq] (6.053742279355587,0.)--
    (7.523478768496922,2.5574097914175); \draw
    (4.165695921154608,8.737586171670088) node[anchor=north west]
    {$y_1^d$}; \draw (9.032251606344808,0.7046497041286413)
    node[anchor=north west] {$y_3^d$}; \draw
    (-1.0724370478083225,0.7514435087939313) node[anchor=north west]
    {$y_2^d$}; \draw (3.165695921154608,-0.5) node[anchor=north west]
    {$d$ is odd};
    \begin{scriptsize}
      \draw [fill=uuuuuu] (4.5,7.794228634059948) circle (1.5pt);
      \draw [fill=uuuuuu] (9.,0.) circle (1.5pt); \draw [fill=uuuuuu]
      (0.,0.) circle (1.5pt); \draw [fill=uuuuuu]
      (3.997275263841643,6.923483848812012) circle (1.5pt); \draw
      [fill=uuuuuu] (3.5059085006822355,6.072411649869255) circle
      (1.5pt); \draw [fill=uuuuuu]
      (3.008715016258719,5.211247273655521) circle (1.5pt); \draw
      [fill=uuuuuu] (2.5167258979619582,4.359097123994515) circle
      (1.5pt); \draw [fill=uuuuuu] (2.0207259421636903,3.5) circle
      (1.5pt); \draw [fill=uuuuuu]
      (1.5066964479989235,2.6096747995176934) circle (1.5pt); \draw
      [fill=uuuuuu] (1.0145976718395366,1.7573347168671714) circle
      (1.5pt); \draw [fill=uuuuuu]
      (0.502666688159133,0.8706442431639989) circle (1.5pt); \draw
      [fill=uuuuuu] (4.036965511460288,0.00777062574749973) circle
      (1.5pt); \draw [fill=uuuuuu]
      (5.039679077626302,-0.010785968301580214) circle (1.5pt); \draw
      [fill=uuuuuu] (8.517209438086388,0.8362177826491061) circle
      (1.5pt); \draw [fill=uuuuuu]
      (8.021324865405187,1.6951150572225253) circle (1.5pt); \draw
      [fill=uuuuuu] (7.523478768496922,2.5574097914175) circle
      (1.5pt); \draw [fill=uuuuuu]
      (7.022953179664346,3.4243455417638518) circle (1.5pt); \draw
      [fill=uuuuuu] (6.5162395692966,4.301999259807449) circle
      (1.5pt); \draw [fill=uuuuuu]
      (6.018393472388334,5.164293994002423) circle (1.5pt); \draw
      [fill=uuuuuu] (5.510517614366132,6.043960784034516) circle
      (1.5pt); \draw [fill=uuuuuu]
      (5.017949192431124,6.897114317029975) circle (1.5pt); \draw
      [fill=uuuuuu] (6.053742279355587,0.) circle (1.5pt); \draw
      [fill=uuuuuu] (7.046922501193663,0.) circle (1.5pt); \draw
      [fill=uuuuuu] (8.030408142892549,0.) circle (1.5pt); \draw
      [fill=uuuuuu] (3.018421220600845,0.) circle (1.5pt); \draw
      [fill=uuuuuu] (2.0216585484887846,0.) circle (1.5pt); \draw
      [fill=uuuuuu] (1.0022306950835727,0.) circle (1.5pt);
    \end{scriptsize}
  \end{tikzpicture}
  \caption{ For even $d$, the middle downward triangle is
    $\mathcal{B}$.}\label{fig1}
\end{figure}
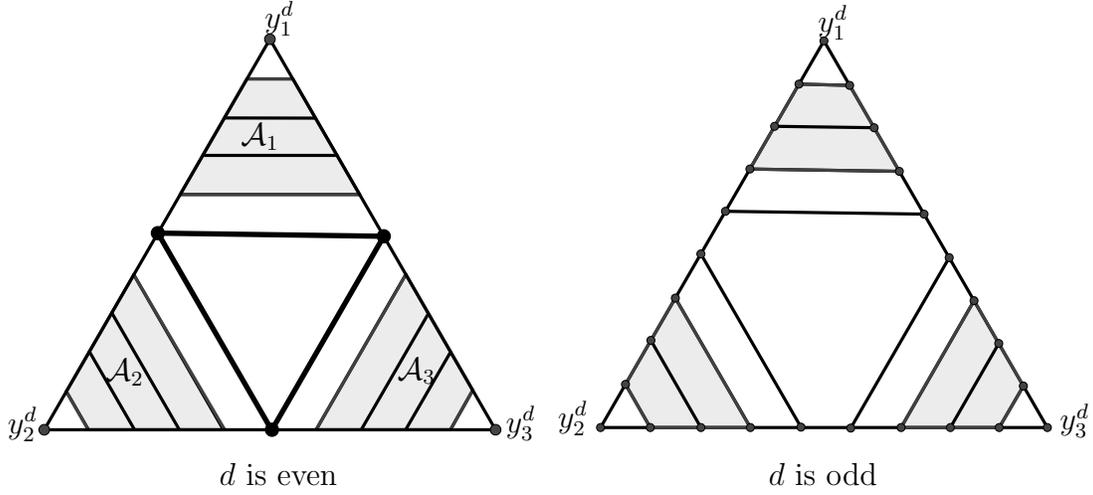
For the three remaining cases where $a_1=d-2$ we will show for even
degree $d$ we get $\vert \supp (f)\vert \geq 3d-2$, see Figure
\ref{fig2}. Note that when $d=4$ and $a_1=d-2=2$ we have seen already
that $\vert \supp(f)\vert \geq 3d-2$. So we can assume $d\geq 6$.
Using Proposition \ref{level mons} we get
$\vert \mathcal{L}^j_{1,d}\cap \supp(f)\vert \geq d-(d-2)+1 = 3$ for
each $0\leq j\leq d-2$, so we can partition
$ \supp(f) = \mathcal{S}_1\sqcup \mathcal{S}_2$, where
$\vert \mathcal{S}_1 \cap \mathcal{L}^j_{1,d}\vert=3$ for
$3\leq j\leq d-2$ .Then assume $d\geq 6$ and consider the following
cases.

If $(a_1,a_2,a_3)= (d-2, d-2, d-2)$ we apply Proposition \ref{level
  mons} for the variables $y_2$ and $y_3$, where $d-3\leq j,k\leq d-2$
\begin{align*}
  \vert \supp(f)\vert & = \vert \mathcal{S}_1 \vert + \mid \left(\cup_{j=d-3}^{d-2}(\mathcal{L}^j_{2,d}\cap \supp(f))\right) \cup \left(\cup_{k=d-3}^{d-2}(\mathcal{L}^k_{3,d}\cap \supp(f))\right)\setminus \left(\mathcal{S}_1\cap \mathcal{L}^3_{1,d}\right) \mid \\ &\geq 3(d-4)+ (2\times 3-1)+(2\times 3-1)= 3d-2.
\end{align*}
If $(a_1,a_2,a_3)=(d-2, d-2, d-1)$, we apply Proposition \ref{level
  mons} for the variables $y_2$ and $y_3$, where $d-3\leq j\leq d-2$
and $d-3\leq k\leq d-1$
\begin{align*}
  \vert \supp(f)\vert & = \vert \mathcal{S}_1 \vert +\mid \left(\cup_{j=d-3}^{d-2}(\mathcal{L}^j_{2,d}\cap \supp(f))\right)\cup \left(\cup_{k=d-3}^{d-1}(\mathcal{L}^k_{3,d}\cap \supp(f))\right)\setminus \left(\mathcal{S}_1\cap \mathcal{L}^3_{1,d}\right)\mid \\ &\geq 3(d-4)+ (2\times 3-1)+(3\times 2-1)= 3d-2.
\end{align*}
If $(a_1,a_2,a_3)=(d-2, d-1, d-1)$, we apply Proposition \ref{level
  mons} for the variables $y_2$ and $y_3$, where $d-3\leq j,k\leq d-1$
\begin{align*}
  \vert \supp(f)\vert & = \vert \mathcal{S}_1 \vert +\mid \left(\cup_{j=d-3}^{d-1}(\mathcal{L}^j_{2,d}\cap \supp(f))\right)\cup \left(\cup_{k=d-3}^{d-1}(\mathcal{L}^k_{3,d}\cap \supp(f))\right)\setminus \left(\mathcal{S}_1\cap \mathcal{L}^3_{1,d}\right) \mid \\ &\geq 3(d-4)+ (3\times 2-1)+(3\times 2-1)= 3d-2.
\end{align*}
\definecolor{uququq}{rgb}{0.25098039215686274,0.25098039215686274,0.25098039215686274}
\definecolor{uuuuuu}{rgb}{0.26666666666666666,0.26666666666666666,0.26666666666666666}
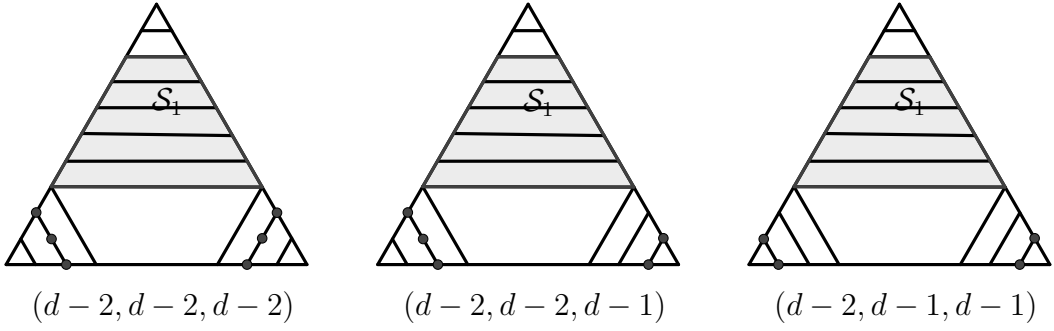
\begin{figure}
  \begin{tikzpicture}[line cap=round,line join=round,>=triangle
    45,x=0.4cm,y=0.4cm]
    \clip(-1.,-2.) rectangle (11.,10.); \fill[line
    width=1.2pt,color=uququq,fill=uququq,fill
    opacity=0.10000000149011612] (3.991880895975647,6.91414052959339)
    -- (6.010399308523302,6.910191099549571) --
    (8.511888084550945,2.5774854449064084) --
    (1.4968067630886037,2.592545362782172) -- cycle; \draw [line
    width=1.2pt] (9.,0.)-- (9.49417148302896,0.8761206913110586);
    \draw [line width=1.2pt] (8.007327664869802,0.)--
    (9.003663832434903,1.7257048636412096); \draw [line width=1.2pt]
    (7.023776169101887,0.)-- (8.511888084550945,2.577485444906408);
    \draw [line width=1.2pt] (0.9793009696180911,0.)--
    (0.4896504848090458,0.8480995176399997); \draw [line width=1.2pt]
    (2.001338878616366,0.)-- (1.0006694393081834,1.7332103104632341);
    \draw [line width=1.2pt] (2.993613526177205,0.)--
    (1.4968067630886037,2.5925453627821717); \draw [line width=1.2pt]
    (4.492032596627272,7.780428686613984)--
    (5.507967403372733,7.780428686613984); \draw [line width=1.2pt]
    (3.991880895975648,6.914140529593392)--
    (6.010399308523301,6.910191099549572); \draw [line width=1.2pt]
    (3.5125688495178418,6.083947712448658)--
    (6.487431150482164,6.083947712448658); \draw [line width=1.2pt]
    (3.0124171488662173,5.217659555428064)--
    (6.987582851133785,5.217659555428064); \draw [line width=1.2pt]
    (2.517622279471564,4.360649702312117)--
    (7.522953075721945,4.290371125581811); \draw [line width=1.2pt]
    (0.,0.)-- (5.,8.660254037844387); \draw [line width=1.2pt]
    (5.,8.660254037844387)-- (10.,0.); \draw [line width=1.2pt]
    (10.,0.)-- (0.,0.); \draw [line width=1.2pt]
    (1.98464150588864,3.4374999230091303)--
    (8.011836611624666,3.4436000024143736); \draw [line width=1.2pt]
    (1.4968067630886037,2.592545362782172)--
    (8.511888084550945,2.5774854449064084); \draw [line
    width=1.2pt,color=uququq] (3.991880895975647,6.91414052959339)--
    (6.010399308523302,6.910191099549571); \draw [line
    width=1.2pt,color=uququq] (6.010399308523302,6.910191099549571)--
    (8.511888084550945,2.5774854449064084); \draw [line
    width=1.2pt,color=uququq] (8.511888084550945,2.5774854449064084)--
    (1.4968067630886037,2.592545362782172); \draw [line
    width=1.2pt,color=uququq] (1.4968067630886037,2.592545362782172)--
    (3.991880895975647,6.91414052959339); \draw
    (4.479828891009773,6.213543581827591) node[anchor=north west]
    {$\mathcal{S}_1$}; \draw (0.479828891009773,-0.5213543581827591)
    node[anchor=north west] {$(d-2,d-2,d-2)$};
    \begin{scriptsize}
      \draw [fill=uuuuuu] (9.003663832434903,1.725704863641209) circle
      (1.8pt); \draw [fill=uuuuuu]
      (8.511390461823932,0.8730623745298263) circle (1.8pt); \draw
      [fill=uuuuuu] (8.007327664869802,0.) circle (1.8pt); \draw
      [fill=uuuuuu] (1.0006694393081834,1.733210310463234) circle
      (1.8pt); \draw [fill=uuuuuu]
      (1.5098554358395553,0.8512742939683074) circle (1.8pt); \draw
      [fill=uuuuuu] (2.001338878616366,0.) circle (1.8pt);
    \end{scriptsize}
  \end{tikzpicture}
  \begin{tikzpicture}[line cap=round,line join=round,>=triangle
    45,x=0.4cm,y=0.4cm]
    \clip(-1.,-2.) rectangle (11.,10.); \fill[line
    width=1.2pt,color=uququq,fill=uququq,fill
    opacity=0.10000000149011612] (3.991880895975647,6.91414052959339)
    -- (6.010399308523302,6.910191099549571) --
    (8.511888084550945,2.5774854449064084) --
    (1.4968067630886037,2.592545362782172) -- cycle; \draw [line
    width=1.2pt] (9.,0.)-- (9.49417148302896,0.8761206913110586);
    \draw [line width=1.2pt] (8.007327664869802,0.)--
    (9.003663832434903,1.7257048636412096); \draw [line width=1.2pt]
    (7.023776169101887,0.)-- (8.511888084550945,2.577485444906408);
    \draw [line width=1.2pt] (0.9793009696180911,0.)--
    (0.4896504848090458,0.8480995176399997); \draw [line width=1.2pt]
    (2.001338878616366,0.)-- (1.0006694393081834,1.7332103104632341);
    \draw [line width=1.2pt] (2.993613526177205,0.)--
    (1.4968067630886037,2.5925453627821717); \draw [line width=1.2pt]
    (4.492032596627272,7.780428686613984)--
    (5.507967403372733,7.780428686613984); \draw [line width=1.2pt]
    (3.991880895975648,6.914140529593392)--
    (6.010399308523301,6.910191099549572); \draw [line width=1.2pt]
    (3.5125688495178418,6.083947712448658)--
    (6.487431150482164,6.083947712448658); \draw [line width=1.2pt]
    (3.0124171488662173,5.217659555428064)--
    (6.987582851133785,5.217659555428064); \draw [line width=1.2pt]
    (2.517622279471564,4.360649702312117)--
    (7.522953075721945,4.290371125581811); \draw [line width=1.2pt]
    (0.,0.)-- (5.,8.660254037844387); \draw [line width=1.2pt]
    (5.,8.660254037844387)-- (10.,0.); \draw [line width=1.2pt]
    (10.,0.)-- (0.,0.); \draw [line width=1.2pt]
    (1.98464150588864,3.4374999230091303)--
    (8.011836611624666,3.4436000024143736); \draw [line width=1.2pt]
    (1.4968067630886037,2.592545362782172)--
    (8.511888084550945,2.5774854449064084); \draw [line
    width=1.2pt,color=uququq] (3.991880895975647,6.91414052959339)--
    (6.010399308523302,6.910191099549571); \draw [line
    width=1.2pt,color=uququq] (6.010399308523302,6.910191099549571)--
    (8.511888084550945,2.5774854449064084); \draw [line
    width=1.2pt,color=uququq] (8.511888084550945,2.5774854449064084)--
    (1.4968067630886037,2.592545362782172); \draw [line
    width=1.2pt,color=uququq] (1.4968067630886037,2.592545362782172)--
    (3.991880895975647,6.91414052959339); \draw
    (4.471136546982098,6.195046384358876) node[anchor=north west]
    {$\mathcal{S}_1$}; \draw (0.479828891009773,-0.5213543581827591)
    node[anchor=north west] {$(d-2,d-2,d-1)$};
    \begin{scriptsize}
      \draw [fill=uuuuuu] (9.49417148302896,0.8761206913110575) circle
      (1.8pt); \draw [fill=uuuuuu] (9.,0.) circle (1.8pt); \draw
      [fill=uuuuuu] (1.5098554358395553,0.8512742939683074) circle
      (1.8pt); \draw [fill=uuuuuu]
      (1.0006694393081834,1.733210310463234) circle (1.8pt); \draw
      [fill=uuuuuu] (2.001338878616366,0.) circle (1.8pt);
    \end{scriptsize}
  \end{tikzpicture}
  \begin{tikzpicture}[line cap=round,line join=round,>=triangle
    45,x=0.4cm,y=0.4cm]
    \clip(-1.,-2.) rectangle (11.,10.); \fill[line
    width=1.2pt,color=uququq,fill=uququq,fill
    opacity=0.10000000149011612] (3.991880895975647,6.91414052959339)
    -- (6.010399308523302,6.910191099549571) --
    (8.511888084550945,2.5774854449064084) --
    (1.4968067630886037,2.592545362782172) -- cycle; \draw [line
    width=1.2pt] (9.,0.)-- (9.49417148302896,0.8761206913110586);
    \draw [line width=1.2pt] (8.007327664869802,0.)--
    (9.003663832434903,1.7257048636412096); \draw [line width=1.2pt]
    (7.023776169101887,0.)-- (8.511888084550945,2.577485444906408);
    \draw [line width=1.2pt] (0.9793009696180911,0.)--
    (0.4896504848090458,0.8480995176399997); \draw [line width=1.2pt]
    (2.001338878616366,0.)-- (1.0006694393081834,1.7332103104632341);
    \draw [line width=1.2pt] (2.993613526177205,0.)--
    (1.4968067630886037,2.5925453627821717); \draw [line width=1.2pt]
    (4.492032596627272,7.780428686613984)--
    (5.507967403372733,7.780428686613984); \draw [line width=1.2pt]
    (3.991880895975648,6.914140529593392)--
    (6.010399308523301,6.910191099549572); \draw [line width=1.2pt]
    (3.5125688495178418,6.083947712448658)--
    (6.487431150482164,6.083947712448658); \draw [line width=1.2pt]
    (3.0124171488662173,5.217659555428064)--
    (6.987582851133785,5.217659555428064); \draw [line width=1.2pt]
    (2.517622279471564,4.360649702312117)--
    (7.522953075721945,4.290371125581811); \draw [line width=1.2pt]
    (0.,0.)-- (5.,8.660254037844387); \draw [line width=1.2pt]
    (5.,8.660254037844387)-- (10.,0.); \draw [line width=1.2pt]
    (10.,0.)-- (0.,0.); \draw [line width=1.2pt]
    (1.98464150588864,3.4374999230091303)--
    (8.011836611624666,3.4436000024143736); \draw [line width=1.2pt]
    (1.4968067630886037,2.592545362782172)--
    (8.511888084550945,2.5774854449064084); \draw [line
    width=1.2pt,color=uququq] (3.991880895975647,6.91414052959339)--
    (6.010399308523302,6.910191099549571); \draw [line
    width=1.2pt,color=uququq] (6.010399308523302,6.910191099549571)--
    (8.511888084550945,2.5774854449064084); \draw [line
    width=1.2pt,color=uququq] (8.511888084550945,2.5774854449064084)--
    (1.4968067630886037,2.592545362782172); \draw [line
    width=1.2pt,color=uququq] (1.4968067630886037,2.592545362782172)--
    (3.991880895975647,6.91414052959339); \draw
    (4.479828891009773,6.213543581827591) node[anchor=north west]
    {$\mathcal{S}_1$}; \draw (0.479828891009773,-0.5213543581827591)
    node[anchor=north west] {$(d-2,d-1,d-1)$};
    \begin{scriptsize}
      \draw [fill=uuuuuu] (9.49417148302896,0.8761206913110575) circle
      (1.8pt); \draw [fill=uuuuuu] (9.,0.) circle (1.8pt); \draw
      [fill=uuuuuu] (0.9793009696180911,0.) circle (1.8pt); \draw
      [fill=uuuuuu] (0.4896504848090458,0.8480995176399997) circle
      (1.8pt);
    \end{scriptsize}
  \end{tikzpicture}
  \caption{Three cases for the triple $(a_1,a_2,a_3)$ when $d$ is
    even.}\label{fig2}
\end{figure}
\end{proof}

\section{Bound on the number of generators of ideals with more than three variables failing WLP }
In this section we consider artinian monomial ideals
$I\subset S=\mathbb{K}[x_1,\dots ,x_n]$ generated in degree $d$, for
$n\geq 4$. We provide a sharp lower bound for the number of monomials
with non-zero coefficients in a non-zero form $f\in(I^{-1})_d$ such
that $(x_1+\cdots +x_n)\circ f=0$. The next theorem provides such
lower bound for the form $f$ in terms of the maximum degree of the
variables in $f$.
\begin{thm}\label{max}
  For $n\geq 4$ and $d\geq 2$, let $f$ be a non-zero form of degree
  $d$ in the dual ring $R=\mathbb{K}[y_1,\dots ,y_n]$ of
  $S=\mathbb{K}[x_1,\dots ,x_n]$ such that
  $(x_1+\cdots +x_n)\circ f =0$. Then we have
  $\vert \supp(f)\vert \geq \max\lbrace (a_i+1)(d-a_i+1)\mid a_i\neq
  0\rbrace$,
  where $a_i = \max \lbrace \deg_i(m)\mid m\in \supp(f)\rbrace$.
\end{thm}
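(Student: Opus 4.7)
The plan is to reduce to Theorem \ref{a+2} applied in $n-1$ variables. Fix any index $i$ with $a_i \ne 0$ and expand
\[
f = \sum_{k=0}^{a_i} y_i^{k} g_k,
\]
where each $g_k \in \mathbb{K}[y_1,\ldots,\widehat{y_i},\ldots,y_n]$ is homogeneous of degree $d-k$. Setting $\ell' = \ell - x_i$, Lemma \ref{SLP} with $c=1$ gives the recurrence $(k+1) g_{k+1} + \ell' \circ g_k = 0$ for $0 \le k \le d-1$. Iterating this identity one obtains
\[
g_{k+j} = \frac{(-1)^{j} k!}{(k+j)!}\,(\ell')^{j} \circ g_k,
\]
so the boundary condition $g_{a_i+1} = 0$ forces $(\ell')^{a_i+1-k} \circ g_k = 0$ for every $0 \le k \le a_i$.

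Next I would invoke Proposition \ref{prop:levels}(ii) to conclude that $g_k \neq 0$ for every $0 \le k \le a_i$, since $a_i$ is the maximum exponent of $y_i$ on $\supp(f)$. Hence each $g_k$ is a nonzero form of degree $d-k$ in the $n-1$ variables complementary to $y_i$, annihilated by $(\ell')^{a_i+1-k}$, where $\ell'$ is exactly the sum of the remaining variables. Applying Theorem \ref{a+2} to $g_k$ with the parameter choice $a' = d - a_i - 1$ — so that $(d-k) - a' = a_i + 1 - k$ matches the annihilating exponent — yields
\[
|\supp(g_k)| \ge a' + 2 = d - a_i + 1,
\]
valid as soon as $a' \ge 0$, i.e.\ $a_i \le d-1$. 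The degenerate case $a_i = d$ can be dispatched directly, since nonvanishing of each $g_k$ already gives $|\supp(g_k)| \ge 1 = d - a_i + 1$.

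Because the monomials in $\supp(f)$ are partitioned by their $y_i$-degree $k \in \{0,\ldots,a_i\}$, summing the estimates gives
\[
|\supp(f)| = \sum_{k=0}^{a_i} |\supp(g_k)| \ge (a_i + 1)(d - a_i + 1),
\]
and taking the maximum over all $i$ with $a_i \ne 0$ finishes the proof. The main point requiring care is verifying that the recurrence from Lemma \ref{SLP} really propagates the annihilation to every $g_k$, and that Theorem \ref{a+2} — whose internal induction starts from $n=2$ — is legitimately applicable in the restricted ring on $n-1 \ge 3$ variables with $\ell'$ playing the role of the sum of all variables there; both of these are routine once set up, and no new ideas beyond the previously established machinery are needed.
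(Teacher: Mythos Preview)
Your proposal is correct and follows essentially the same route as the paper: expand $f$ in powers of $y_i$, use the recurrence from Lemma~\ref{SLP} to obtain $(\ell')^{a_i+1-k}\circ g_k=0$, invoke Proposition~\ref{prop:levels}(ii) for nonvanishing of each $g_k$, apply Theorem~\ref{a+2} to get $|\supp(g_k)|\ge d-a_i+1$, and sum. Your explicit handling of the degenerate case $a_i=d$ is slightly more careful than the paper, which tacitly assumes $a_i\le d-1$, but otherwise the arguments coincide.
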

\begin{proof}
  We show that for each $1\leq i\leq n$ we have
  $\vert \supp(f)\vert \geq (a_i+1)(d-a_i+1)$. Denote
  $\ell = x_1+\cdots +x_n$ and $\ell^\prime = \ell-x_i$ and write
  $f=\sum^{a_i}_{k=0}y^k_ig_k$, where $g_k$ is a polynomial of degree
  $d-k$ in the variables different from $y_i$. Since we have
  $\ell\circ f=0$, Lemma \ref{sur} implies that
$$
(k+1)g_{k+1}+\ell^\prime\circ g_k=0, \hspace*{0.5 cm}\forall 0\leq
k\leq a_i
$$
and acting on each equation by $(\ell^\prime)^{a_i-k}$ we get that
$(\ell^\prime)^{a_n-k+1}\circ g_k=0$ for all $0\leq k\leq a_i$. By the
definition of $a_i$ we have $g_{a_i}\neq 0$, Proposition
\ref{prop:levels} part (ii) implies that for every $0\leq k\leq a_i$
we have $g_k\neq 0$. Now applying Theorem \ref{a+2} we get that for
each $0\leq k\leq a_i$,
$\vert \supp(g_k)\vert \geq d-k-(a_i-k+1)+2=d-a_i+1$. Therefore,
\begin{align*}
  \vert \supp(f)\vert = &\vert \cup_{k=0}^{a_i}  \mathcal{L}_{i,d}^{k}\cap \supp(f)\vert = \sum^{a_i}_{k=0}\vert \supp(g_k)\vert\geq  (a_i+1)(d-a_i+1)
\end{align*}
and we conclude that
$\vert \supp(f)\vert \geq \max\lbrace (a_i+1)(d-a_i+1)\mid 1\leq i\leq
n\rbrace$.
\end{proof}
In general we can prove that the sharp lower bound is always $2d$. Recall from  Definition~\ref{def:nu} that $\phi (I,d) : \times (x_1+\cdots +x_n):(S/I)_{d-1} \rightarrow
  (S/I)_{d}$.
\begin{thm}\label{2d}
  For $n\geq 4$ and $d\geq 2$, we have
$$
\nu(n,d)=2d.
$$
 Where, 
$
\nu(n,d) = min \lbrace H_{(S/I)}(d) \mid \phi(I,d) \hspace*{0.1
  cm}\text{is not surjective, for}\hspace*{1 mm}
I\in\mathcal{I}_d\rbrace,
$ and $\mathcal{I}_d$ is the set of all artinian monomial ideals of $S$ generated in degree $d$.
\end{thm}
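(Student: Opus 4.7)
The plan is to prove the two inequalities separately: the upper bound $\nu(n,d)\le 2d$ by exhibiting an explicit ideal, and the lower bound $\nu(n,d)\ge 2d$ as a near-immediate consequence of Theorem~\ref{max}.

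For the upper bound, I would take the form $f:=(y_1-y_2)(y_3-y_4)^{d-1}\in R_d$. Writing $\ell=(x_1+x_2)+(x_3+x_4)+(x_5+\cdots+x_n)$, the first summand kills the factor $y_1-y_2$, the second summand kills $(y_3-y_4)^{d-1}$ (the two-variable case: $(\partial_{y_3}+\partial_{y_4})(y_3-y_4)^{d-1}=0$), and the third summand annihilates $f$ because the variables $y_5,\dots,y_n$ do not appear. Hence $\ell\circ f=0$, and expanding shows $|\supp(f)|=2d$ (the $d$ monomials $y_1y_3^{d-1-k}y_4^k$ together with the $d$ monomials $y_2y_3^{d-1-k}y_4^k$, for $0\le k\le d-1$). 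Now let $I$ be the monomial ideal generated by all monomials of degree $d$ that are not in $\supp(f)$. Since $y_i^d\notin\supp(f)$ for every $i$, each $x_i^d$ is among the generators, so $I$ is artinian; by construction $I\in\mathcal{I}_d$ and the monomials of $(I^{-1})_d$ are exactly $\supp(f)$, giving $H_{S/I}(d)=2d$. Because $f\in(I^{-1})_d$ and $\ell\circ f=0$, the dual map $\phi(I,d)$ fails surjectivity, so $\nu(n,d)\le 2d$.

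For the lower bound, let $I\in\mathcal{I}_d$ be any ideal for which $\phi(I,d)$ is not surjective. By Macaulay duality there exists $f\in(I^{-1})_d\setminus\{0\}$ with $\ell\circ f=0$. Set $a_i:=\max\{\deg_i(m)\mid m\in\supp(f)\}$. Because $I$ is artinian and generated in degree $d$, each $x_i^d$ must belong to the minimal generating set (no monomial of smaller total degree can divide $x_i^a$), so $y_i^d\notin\supp(f)$ and hence $a_i\le d-1$ for every $i$. Since $f\ne 0$ has degree $d\ge 2$, at least one $a_j$ is positive. Applying Theorem~\ref{max} gives
\[
|\supp(f)|\ \ge\ (a_j+1)(d-a_j+1).
\]
The elementary inequality $(a+1)(d-a+1)\ge 2d$ for every integer $a$ with $1\le a\le d-1$ (which rearranges to $a(d-a)\ge d-1$ and holds because $a$ and $d-a$ are positive integers with sum $d$) then yields $|\supp(f)|\ge 2d$. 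Finally, $H_{S/I}(d)\ge |\supp(f)|$ since $\supp(f)$ sits inside the monomial basis of $(I^{-1})_d$, so $H_{S/I}(d)\ge 2d$ and thus $\nu(n,d)\ge 2d$.

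There is no real obstacle: Theorem~\ref{max} already does the substantive work, and what remains is the elementary optimization of $(a+1)(d-a+1)$ on $[1,d-1]$ and the verification that the candidate ideal $I$ is artinian with $H_{S/I}(d)=2d$. The only point that requires a line of justification is the assertion $a_i\le d-1$, which uses that an artinian monomial ideal generated in a single degree $d$ must contain each $x_i^d$ as a minimal generator.
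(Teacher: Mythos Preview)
Your proposal is correct and follows essentially the same route as the paper: the same candidate $f=(y_1-y_2)(y_3-y_4)^{d-1}$ for the upper bound, and Theorem~\ref{max} together with the elementary inequality $(a+1)(d-a+1)\ge 2d$ on $1\le a\le d-1$ for the lower bound. Your version is in fact more careful than the paper's in spelling out why $a_i\le d-1$ (from $x_i^d\in I$) and in explicitly constructing the ideal $I$ realizing $H_{S/I}(d)=2d$, but there is no genuine difference in strategy.
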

\begin{proof}
  First of all, we observe that for $f=(y_1-y_2)(y_3-y_4)^{d-1}$ we have $\ell\circ f=0$ and since $\vert \supp(f)\vert =2d$ we get $\nu(n,d)\leq 2d$. To show the equality, let $I\subset S$ be an artinian monomial ideal. We check that for any $f\in (I^{-1})_d$ where, $\ell\circ f=0$, $\vert \supp(f)\vert \geq 2d$.\\
  Using Theorem \ref{max} above, we get that for some $1\leq i\leq n$
  where $1\leq a_i\leq d-1$ we have
  $\vert \supp(f)\vert \geq (a_i+1)(d-a_i+1)$. Observe that since we
  have $a_i\leq d-1$ we get that
  $(a_i+1)(d-a_i+1)=d(a_i+1)-(a_i-1)(a_i+1)\geq 2d$, which completes
  the proof.

\end{proof}

\section{Simplicial complexes and Matroids}
In \cite{GIV} Gennaro, Ilardi and Vall\`es describe a relation between
the failure of the SLP of artinian ideals and the existence of special
singular hypersurfaces. In particular, for the ideals we consider in
this section they proved that in the following cases the ideal $I$
fails the SLP at the range $k$ in degree $d+i-k$ if and only if there
exists at any point $M$ a hypersurface of degree $d+i$ with
multiplicity $d+i-k+1$ at $M$ given by a form in $(I^{-1})_{d+i}$, see
\cite{GIV} for more details. In \cite[Theorem 6.2]{GIV}, they provide
a list of monomial ideals $I\subset S=\mathbb{K}[x_1,x_2,x_3]$
generated in degree $5$ failing the WLP. Here we give the exhaustive
list of such ideals.
\begin{dfn}
  Let $I\subset S$ be an artinian monomial ideal and
  $G =\lbrace m_1,\dots ,m_r\rbrace\subset R_d$ be a monomial
  generating set of $(I^{-1})_d$. Assume that $I$ fails the WLP by
  failing surjectivity in degree $d-1$ thus there is a non-zero
  polynomial $f\in (I^{-1})_d$ with $\supp(f)\subset G$ such that
  $(x_1+\cdots +x_n)\circ f=0$. We say $I$ fails the WLP
  \textit{minimally} if the set $G$ is minimal with respect to
  inclusion.
\end{dfn}
\begin{rem}
  Note that for every artinian monomial ideal $I\subset S$ where the
  WLP fails minimally, there is a unique form in the kernel of the map
  $\circ (x_1+\cdots +x_n )\colon (I^{-1})_d\longrightarrow
  (I^{-1})_{d-1}$.
  In fact, if there are two different forms with the same support we
  can eliminate at least one monomial in one of the forms and get a
  form where its support is strictly contained in the support of the
  previous ones, contradicting the minimality.
\end{rem}
\begin{prop}\label{class 1}
  For an artinian monomial ideal $I\subset S$ generated in degree $5$
  with at least $6$ generators, $S/I$ fails the WLP by failing
  surjectivity in degree $4$ if and only if the set of generators for
  the inverse system module $I^{-1}$ contains the monomials in the
  support of one of the following forms, up to permutation of
  variables:
  \begin{itemize}
  \item $(y_2-y_3)(y_1-y_3)^2(y_1-y_2)(2y_1-y_2-y_3)$
  \item $(y_2-y_3)(y_1-y_3)(y_1-y_2)^2(2y_1+y_2-3y_3)$
  \item
    $(y_2-y_3)(y_1-y_3)(y_1-y_2)(y^2_1+y_1y_2+y^2_2-3y_1y_3-3y_2y_3+3y^2_3)$
  \item
    $(y_2-y_3)(y_1-y_3)(y_1-y_2)(y^2_1-y_1y_2-y^2_2-y_1y_3+3y_2y_3-y^2_3)$
  \item $(y_2-y_3)^2(y_1-y_3)^2(y_1-y_2)$
  \item $(y_2-y_3)(y_1-y_3)(y_1-y_2)^3$
  \item
    $(y_2-y_3)(y_1-y_3)(y_1-y_2)(y^2_1-y_1y_2+y^2_2-y_1y_3-y_2y_3+y^2_3).$
  \end{itemize}
  Moreover, the support of all the above forms define monomial ideals
  failing surjectivity minimally.
\end{prop}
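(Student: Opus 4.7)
By passing to the algebraic closure we may assume $\mathbb{K}$ is algebraically closed, since the kernel of $\circ\ell$ is stable under base change. By Lemma \ref{factor}, any non-zero $f$ in the kernel of $\circ\ell\colon (I^{-1})_5\to(I^{-1})_4$, with $\ell = x_1+x_2+x_3$, factors as $f = L_1 L_2 L_3 L_4 L_5$, where each $L_i$ lies in the two-dimensional space $V\subset R_1$ of linear forms annihilated by $\ell$. The three distinguished points of $\mathbb{P}(V)\cong\mathbb{P}^1$ are represented by $A := y_1-y_2$, $B := y_1-y_3$ and $C := y_2-y_3$. The artinian hypothesis forces $y_j^5\notin\supp(f)$ for each $j$; since the coefficient of $y_j^5$ in the expansion of $L_1\cdots L_5$ equals the product of the $y_j$-coefficients of the $L_i$, at least one $L_i$ must be a scalar multiple of each of $A$, $B$ and $C$. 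Thus
\[
f = A^a B^b C^c\, Q,\qquad a,b,c\geq 1,\ \deg Q = 5-a-b-c,
\]
with $Q$ a product of linear forms in $V$. Modulo the $S_3$-action on $\{A,B,C\}$, $\{a,b,c\}$ is one of $\{1,1,3\}$, $\{1,2,2\}$, $\{1,1,2\}$ (with $Q$ linear) or $\{1,1,1\}$ (with $Q$ a conic in $V$), giving four cases.

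In the first two cases $Q$ is a scalar; direct expansion yields $|\supp(f)|=12$, recovering forms 6 and 5. In the third, set $L = \alpha A + \beta B$ and exclude the three loci $\alpha = 0$, $\beta = 0$, $\alpha+\beta = 0$ on which $L$ would be proportional to $B$, $A$ or $C$ respectively, reducing to an earlier case. Each coefficient of $f = ABC^2 L$ among the $18$ candidate monomials of $\mathcal{M}_5\setminus\{y_1^5,y_2^5,y_3^5\}$ is a linear form in $(\alpha,\beta)$, and the simultaneous vanishing patterns stratify $\mathbb{P}^1$. Modding out by the $S_3$-action permuting the three special points, one finds exactly two orbits of parameters at which the support is minimal for inclusion: $L = A+B$ (the harmonic cross-ratio, giving form 1) and $L = 2B+C = -A+3B$ (giving form 2), both with $|\supp(f)| = 15$.

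In the fourth case, parameterise $Q = \alpha A^2 + \beta AB + \gamma B^2 \in\mathrm{Sym}^2 V$ and exclude the three reducibility lines $\alpha = 0$, $\gamma = 0$, $\alpha-\beta+\gamma = 0$ on which $A$, $B$ or $C$ divides $Q$. The 18 coefficients of $f = ABC\,Q$ are now quadratic forms in $(\alpha,\beta,\gamma)$, and their vanishing loci stratify $\mathbb{P}^2$ equivariantly for the induced $S_3$-action. A direct analysis in a fundamental domain singles out exactly three orbits realising minimal supports: the $S_3$-fixed point $Q = A^2 - AB + B^2 = \tfrac12(A^2+B^2+C^2)$ (form 7), the $(12)$-invariant orbit of $Q = A^2 - 3AB + 3B^2$ (form 3) and the $(23)$-invariant orbit of $Q = -A^2 + 3AB - B^2$ (form 4), of sizes $1$, $3$ and $3$ respectively, each yielding $|\supp(f)| = 15$. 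The converse is checked by direct expansion: every listed $f_i$ satisfies $\ell\circ f_i = 0$, has no pure power $y_j^5$ in its support, and $|\supp(f_i)|\in\{12,15\}$, so the corresponding ideals have at least $21-15 = 6$ degree-$5$ generators. Cross-comparison of the seven supports verifies that none is strictly contained in another after relabelling variables.

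The principal obstacle is the stratification in the fourth case: the $\mathbb{P}^2$-family of quadratics $Q$ requires a simultaneous analysis of eighteen quadratic coefficient polynomials in three parameters, and isolating precisely the three $S_3$-orbits where the support is minimal is combinatorially delicate. The natural tools are the $S_3$-equivariant discriminant $\beta^2 - 4\alpha\gamma$ together with the three equivariant linear forms $\alpha,\gamma,\alpha-\beta+\gamma$, which parameterise the moduli space of orbits and pick out the three special configurations; a computer-algebra check is a natural complement.
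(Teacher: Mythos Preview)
Your approach is genuinely different from the paper's and considerably more illuminating. The paper's proof is a brute-force Macaulay2 search: it enumerates all $816$ artinian monomial ideals in degree $5$ with at least six generators, extracts the $25$ nonzero kernel elements of $\circ\,\ell$, and reports that modulo the $S_3$-action on the variables these collapse to the seven listed forms. No structural explanation is offered.

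You instead use Lemma~\ref{factor} to factor any kernel element as a product of five linear forms in the two-dimensional space $V=\ker(\circ\,\ell)\subset R_1$, observe that the artinian hypothesis forces each of $A=y_1-y_2$, $B=y_1-y_3$, $C=y_2-y_3$ to occur among the factors, and then stratify by the exponent triple $(a,b,c)$ together with the residual factor in $\mathrm{Sym}^{\le 2}V$. This explains \emph{why} the list is finite and how it is organised: two isolated points (forms~5 and~6), a $\mathbb{P}^1$-family, and a $\mathbb{P}^2$-family whose special loci supply the remaining five forms.

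Two caveats. First, some of your explicit parameter identifications are inconsistent with your stated normal form $f=ABC^2L$. Form~1 in the list is $AB^2C(A+B)$, which has shape $(1,2,1)$; conjugating by the transposition that carries $(1,2,1)$ to $(1,1,2)$ one finds $L\propto B-2A$, not $L=A+B$, and a similar slip affects form~2. These are bookkeeping errors rather than conceptual ones, but they should be corrected. Second, and more substantively, the core of your argument---that in the $\mathbb{P}^1$- and $\mathbb{P}^2$-families the inclusion-minimal supports are \emph{exactly} the orbits you name, and that every other parameter value yields a support containing one of the seven---is asserted rather than carried out. You acknowledge this yourself (``a computer-algebra check is a natural complement''), and a short symbolic computation would close the gap; but as written your proof is a sketch at precisely the point where the paper's exhaustive search is complete. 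What your approach buys is an explanation of the seven forms and a template that could be adapted to other degrees; what the paper's approach buys is an unambiguous machine verification.
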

\begin{proof}
  We prove the statement using Macaulay2 and considering all artinian
  monomial ideals generated in degree $5$ with at least $6$
  generators. There are $816$ of such ideals but considering the ones
  failing the WLP by failing surjectivity in degree $4$ and
  considering the forms in the inverse system module $(I^{-1})_5$
  there are only $25$ distinct non-zero forms $f\in (I^{-1})_5$ such
  that $(x_1+x_2+x_3)\circ f=0$. Therefore, every ideal where $I^{-1}$
  contains the support of each polynomial fails WLP by failing
  surjectivity in degree $4$. Permuting the variables we get only $7$
  equivalence classes which correspond to the forms given in the
  statement.
\end{proof}
\begin{rem}
  The support of the last three forms in Proposition \ref{class 1}
  consists of $12$ monomials which is the same as $\nu(3,5)=12$ given
  in \ref{3d-3}. Therefore, the support of each form in the last three
  cases, up to permutations of the variables generates $I^{-1}$ with
  lease possible number of generators in degree $5$ where $I$ fails
  the WLP. \\Using Proposition \ref{factor}, each of the forms above
  factors in linear form over an algebraically closed field;
  e.g. $\mathbb{K}=\mathbb{C}$.
\end{rem}
The next result completely classifies monomial ideals
$I\subset S=\mathbb{K}[x_1,x_2,x_3,x_4]$, generated in degree $3$,
failing the WLP which extends Proposition $6.3$ in \cite{GIV}.
\begin{prop}\label{class}
  For an artinian monomial ideal $I\subset S$ generated in degree $3$
  with at least $10$ generators, surjectivity of the multiplication
  map by a linear form in degree $2$ of $S/I$ fails if and only if the
  set of generators for inverse system module $I^{-1}$ contains the
  monomials in the support of one of the following forms, up to
  permutation of variables:
  \begin{itemize}
  \item $(y_2-y_4)^2(y_1-y_3)$
  \item $(y_2-y_4)(y_1-y_4)(y_1-y_2)$
  \item $(y_2-y_3)(y_1-y_4)(y_1-2y_3+y_4)$
  \item $(y_2-y_3)(y_1-y_4)(y_1-y_2-y_3+y_4)$
  \item $(y_3-y_4)(y_2-y_4)(y_1-y_3)$
  \item $(y_1-y_4)(y_1y_2+y_1y_3-2y_2y_3-2y_1y_4+y_2y_4+y_3y_4)$
  \item $(y_1-y_2)(y_1y_2-y_1y_3-y_2y_3+2y_3y_4-y^2_4)$
  \item $(y_3-y_4)(y^2_2-y_1y_3+y_1y_4-2y_2y_4+y_3y_4)$
  \item $(y_3-y_4)(y^2_1+y^2_2-2y_1y_3-2y_2y_4+2y_3y_4)$
  \item
    $2y^2_1y_2-3y_1y^2_2+2y^2_2y_3-y_1y^2_3-2y^2_1y_4+2y_1y_2y_4+y^2_2y_4+2y_1y_3y_4-4y_2y_3y_4+y^2_3y_4$
  \item
    $y^2_1y_2-y_1y^2_2+y^2_2y_3-y_1y^2_3-y^2_1y_4+2y_1y_3y_4-2y_2y_3y_4+y^2_3y_4+y_2y^2_4-y_3y^2_4$
  \item
    $y^2_1y_2-y^2_1y_3-2y_1y_2y_3+2y_2y^2_3+4y_1y_3y_4-2y_2y_3y_4-2y^2_3y_4-2y_1y^2_4+y_2y^2_4+y_3y^2_4$
  \item
    $y^2_1y_2-y^2_1y_3-y_1y_2y_3+y_2y^2_3-y_1y_2y_4+3y_1y_3y_4-y_2y_3y_4-y^2_3y_4-y_1y^2_4+y_2y^2_4.$
  \end{itemize}
  Moreover, the support of all the above forms define monomial ideals
  failing surjectivity minimally.
\end{prop}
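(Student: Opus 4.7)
My plan is to carry out the same exhaustive computational search as in Proposition \ref{class 1}, but now over cubics in four variables. Since $\dim_\mathbb{K} S_3 = 20$ and the artinian hypothesis forces $x_i^3 \in I$ for each $i$, the inverse system $(I^{-1})_3$ is spanned by a subset of the sixteen cubic monomials other than the pure cubes, and the condition of having at least ten generators translates to $\dim_\mathbb{K}(I^{-1})_3 \leq 10$. So I would enumerate the subsets $T$ of these sixteen monomials of cardinality at most ten, and for each $T$ compute the kernel of $\circ \ell$ acting on $\mathrm{span}_\mathbb{K}(T)$ with values in $R_2$, where $\ell = x_1 + x_2 + x_3 + x_4$. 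By Proposition \ref{mon} it is enough to test this one linear form.

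From the resulting catalogue of non-zero kernel elements I would discard any form whose support strictly contains the support of another kernel element, keeping only the forms whose supports are minimal under inclusion. Minimality guarantees that the kernel element supported on such a $T$ is unique up to scalar, so the associated form is well-defined up to normalization. I would then group the surviving forms into orbits under the natural $S_4$ action by permutation of variables; each orbit contributes one representative to the list. The expectation, to be confirmed by the computation, is that exactly thirteen orbits appear, matching the forms in the statement.

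The converse direction is immediate once the list is established: if $(I^{-1})_3$ contains $\supp(f)$ for one of the listed forms, then $f \in (I^{-1})_3$ and $\ell \circ f = 0$, exhibiting a non-zero kernel of $\circ\ell \colon (I^{-1})_3 \to (I^{-1})_2$, equivalently the failure of surjectivity of $\times \ell \colon (S/I)_2 \to (S/I)_3$. The minimality addendum in the statement is verified by checking, for each representative $f$, that the monomial ideal whose inverse-system degree-three component is exactly $\supp(f)$ is artinian and that no proper subset of $\supp(f)$ supports a non-zero element of $\ker(\circ\ell)$; both checks are built into the enumeration.

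The main obstacle is organizing the enumeration efficiently and handling the $S_4$ orbit computation in a way that neither overcounts nor misses cases. A naive loop over all $\binom{16}{\leq 10}$ subsets is feasible but wasteful; a cleaner approach is to compute $\ker(\circ\ell)$ once on the full span of the sixteen monomials, enumerate its subspaces supported within each candidate $T$, then canonicalize supports under $S_4$ using a lexicographic representative. With that bookkeeping in place, the proof reduces to a finite Macaulay2 computation whose output must match the thirteen forms listed.
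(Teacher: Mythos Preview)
Your proposal is correct and takes essentially the same approach as the paper: both reduce the statement to an exhaustive Macaulay2 enumeration of kernel elements of $\circ\ell$ on the degree-three inverse system, followed by extraction of the supports that are minimal under inclusion and reduction modulo the $S_4$ action on the variables (the paper reports $237$ distinct non-zero kernel forms, collapsing to the $13$ listed orbits). Your account is more explicit than the paper's about the converse direction, the minimality check, and the bookkeeping under $S_4$, but the underlying argument is identical.
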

\begin{proof}
  We prove it using the same method as the proof of Proposition
  \ref{class 1} using Macaulay2. There are $8008$ artinian monomial
  ideals generated in degree $3$ with at least $10$ generators.
  Considering the forms in the inverse system module $(I^{-1})_3$
  where $(x_1+x_2+x_3+x_4)\circ f=0$ correspond to the ideals failing
  WLP with failing surjectivity in degree $2$, there are $237$
  distinct non-zero forms. Thus any ideal $I$ where its inverse system
  module $I^{-1}$ contains the support of each of the forms fails WLP
  in degree $2$. Also considering the permutation of the variables
  there are $13$ distinct forms given in the statement.
\end{proof}
\begin{rem}
  The first two forms have $6$ monomials which is the same as
  $\nu(4,3)=6$ given in \ref{2d}. Therefore, each form in the last two
  cases, up to permutation of variables give the minimal number of
  generators for the inverse system module $I^{-1}$ where $I$ fails
  the WLP.  \\One can check that the factors in the forms given in
  Proposition \ref{class} are irreducible even over the complex
  numbers (or any algebraically closed field of characteristic zero).
\end{rem}

The above results lead us to correspond simplicial complexes to the
class of ideals failing the WLP by failing surjectivity.  Recall that
Theorem \ref{3d-3} and Corollary \ref{2d} imply that in the polynomial
ring $S=\mathbb{K}[x_1,\dots,x_n]$ when the Hilbert function of an
artinian monomial algebra generated in a single degree $d$,
$H_{S/I}(d)$, is less than $\nu(n,d)$, the monomial algebra $S/I$
satisfies the WLP. First we recall the following definitions:

\begin{dfn}
  A \textit{matroid} is a finite set of elements $M$ together with the
  family of subsets of $M$, called independent sets, satisfying,
  \begin{itemize}
  \item The empty set is independent,
  \item Every subset of an independent set is independent,
  \item For every subset $A$ of $M$, all maximal independent sets
    contained in $A$ have the same number of elements.
  \end{itemize}
\end{dfn}
A \textit{simplicial complex} $\Delta$ is a set of simplices such that
any face of a simplex from $\Delta$ is also in $\Delta$ and the
intersection of any two simplices is a face of both. Note that every
matroid is also a simplicial complex with independent sets as its
simplices.
\begin{dfn}
  Recall $\mathcal{M}_d$ from Definition \ref{def:deg/level} which is
  the set of monomials in degree $d$ in the ring $R$ and define
  ${{\mathcal{M}}^\prime}_d= {\mathcal{M}}_d\setminus \lbrace
  y^d_1,\dots ,y^d_n\rbrace$.
  We define \textit{independent} set $s\subset \mathcal{M}^\prime _d$
  to be the set of monomials such that the set
  $\lbrace (x_1+\dots +x_n)\circ m\mid m\in s\rbrace$ is a linearly
  independent set. A subset $s\subset {{\mathcal{M}}^\prime}_d$ is
  called \textit{dependent} if it is not an independent set. Then
  define $\Delta_{d,sur}$ to be the simplicial complex with the
  monomials in $\mathcal{M}^\prime_d$ as the ground set and all
  independent sets as its faces. Note that $\Delta_{d,sur}$ forms a
  matroid.
\end{dfn}
Any proper subset of the support of each of the forms in \ref{class 1}
and \ref{class} forms an independent set.  Observe that for every
independent set $s$, monomial ideal $I\subset S$ generated by the
$d$-th power of the variables in $S$ and corresponding monomials of
$\mathcal{M}^\prime_d\setminus s$ in $S$ form an artinian ideal $I$,
where $S/I$ satisfies the WLP. Since the ground set of
$\Delta_{d,sur}$ is the subset of monomials $R_d$ the size of an
independent set in bounded from above with the number of monomials in
$R_{d-1}$. Therefore we have
$\dim(\Delta_{d,sur})\leq h_{d-1}(R) - 1$.
\begin{exmp}
  The support of each polynomial given in Proposition \ref{class 1} is
  a minimal non-face of the simplicial complex $\Delta_{5,sur}$ with
  the ground set
  $\mathcal{M}^\prime_5 =\mathcal{M}_5\setminus \lbrace
  y^5_1,y^5_2,y^5_3\rbrace$.
  This simplicial complex has 25 minimal non-faces (considering the
  permutations of variables). $\Delta_{5,sur}$ has 7 minimal non-faces
  of dimension 11, 6 minimal non-faces of dimension 13 and 12 minimal
  non-faces of dimension 14.
\end{exmp}
Similarly we can construct another simplicial complex by the
complement of dependent sets.
\begin{dfn}
  Define $\Delta^*_{d,sur}$ to be the simplicial complex with the
  monomials in $S_d\setminus \lbrace x^d_1,\dots ,x^d_n\rbrace$ as its
  ground set and faces of $\Delta^*_{d,sur}$ are the corresponding
  monomials of $\mathcal{M}^\prime_d\setminus s$ in $S$ where $s$ is a
  dependent set.
\end{dfn}
Observe that artinian algebra $S/I$ where $I$ is generated by the
$d$-th power of the variables in $S$ together with the monomials in a
face of $\Delta^*_{d,sur}$, fails the WLP. In fact the multiplication
map on $S/I$ form degree $d-1$ to $d$ is not surjective.  Theorem
\ref{3d-3} and \ref{2d} imply that every subset
$s\subset \mathcal{M}^\prime_d$ with $\vert s\vert \leq \nu(n,d)$ is
independent. Therefore we have
$\dim(\Delta^*_{d,sur})=\vert S_d \vert -n-\nu(n,d)-1$, the equality
is because the bound is sharp.
\begin{rem}
  Recall that the \textit{Alexander dual} of a simplicial complex $\Delta$ on
  the ground set $V$ is a simplicial complex with the same ground set
  and faces are all the subsets of $V$ where their complements are
  non-faces of $\Delta$.  Observe that $\Delta^*_{d,sur}$ is a
  simplicial complex in $S_d$ and $\Delta_{d,sur}$ is a simplicial
  complex in the Macaulay dual ring $R_d$. Note that for any
  independent set $s\subset \mathcal{M}^\prime_d$ the corresponding
  monomials of the complement $\mathcal{M}^\prime_d\setminus s$ in the
  ring $S$ is not a face of $\Delta^*_{d,sur}$ which implies that
  $\Delta_{d,sur}$ is Alexander dual to $\Delta^*_{d,sur}$.
\end{rem}
We may construct simplicial complexes corresponding to artinian
algebras failing or satisfying injectivity in a certain degree.
\begin{dfn}
  Define $\Delta_{d,inj}$ to be a simplicial complex with the
  monomials in $\mathcal{M}^\prime_d$ as its ground set and faces
  correspond to generators of $(I^{-1})_d$ where $I$ fails injectivity
  in degree $d-1$. \end{dfn}
\begin{rem}
  Recall that all minimal monomial Togliatti systems correspond to
  facets of $\Delta_{d,inj}$. In fact for minimal monomial Togliatti
  system $I$ the inverse system module has the maximum number of
  generators where $I$ fails injectivity in degree $d-1$.
\end{rem}

\section{WLP of ideals fixed by actions of a cyclic Group}
Mezzetti and Mir\'o-Roig in \cite{MM2} studied artinian ideals of the
polynomial ring $\mathbb{K}[x_1,x_2,x_3]$, where $\mathbb{K}$ is an
algebraically closed field of characteristic zero generated by
homogeneous polynomials of degree $d$ invariant under an action of
cyclic group $\mathbb{Z}/d\mathbb{Z}$, for $d\geq 3$ and they proved
that if $\gcd(a_1,a_2,a_3,d)=1$ they define monomial Togliatti systems. In \cite{MM3}, Colarte, Mezzetti, Mir\'o-Roig and Salat consider such ideals in a polynomial ring with at least three
variables. Throughout this section $\mathbb{K}=\mathbb{C}$ and
$S=\mathbb{K}[x_1,\dots ,x_n]$, where $n\geq 3$. Let $d\geq 2$ and
$\xi = e^{{2\pi i}/d}$ to be the primitive $d$-th root of
unity. Consider diagonal matrix
$$M_{a_1,\dots ,a_n} = \begin{pmatrix}
  \xi^{a_1} & 0 & \cdots  & 0\\
  0 & \xi^{a_2} & \cdots & 0\\
  \vdots & \vdots & &\vdots \\
  0 & 0 &\cdots & \xi^{a_n}
\end{pmatrix}
$$
representing the cyclic group $\mathbb{Z}/d\mathbb{Z}$, where
$a_1,a_2,\dots ,a_n$ are integers and the action is defined by
$[x_1,\dots ,x_n]\mapsto[\xi^{a_1}x_1,\dots ,\xi^{a_n}x_n]$. Since
$\xi^d=1$, we may assume that $0\leq a_i\leq d-1$, for every
$1\leq i\leq n$. Let $I\subset S$ be the ideal generated by all the
forms of degree $d$ fixed by the action of $M_{a_1,\dots ,a_n}$.  In
\cite[Theorem $3.1$]{MM2}, Mezzetti and Mir\'o-Roig showed that these
ideals are monomial ideals when $n=3$. Here we state it in general for
all $n\geq 3$ with a slightly different proof.
\begin{lem}\label{mon ideal}
  For integer $d\geq 2$, the ideal
  $I\subset S=\mathbb{K}[x_1,\dots ,x_n]$ generated by all the forms
  of degree $d$ fixed by the action of $M_{a_1,\dots ,a_n}$ is
  artinian and generated by monomials.
\end{lem}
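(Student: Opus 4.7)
The plan is to decompose the action-fixed condition monomial-by-monomial. Under the action of $M_{a_1,\dots,a_n}$, a monomial $x^\alpha = x_1^{\alpha_1}\cdots x_n^{\alpha_n}$ of degree $d$ is sent to $\xi^{\,a_1\alpha_1+\cdots+a_n\alpha_n}\,x^\alpha$, so the scalar factor depends only on $\alpha$. Thus for a general degree-$d$ form $f = \sum_\alpha c_\alpha x^\alpha$, the image $M_{a_1,\dots,a_n}\cdot f$ is $\sum_\alpha c_\alpha \xi^{a\cdot\alpha} x^\alpha$, and the equality $M_{a_1,\dots,a_n}\cdot f = f$ decouples into the condition $c_\alpha\bigl(\xi^{a\cdot\alpha}-1\bigr)=0$ for each multi-index $\alpha$ with $|\alpha|=d$.

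Next I would conclude that each monomial in the support of a fixed form $f$ is itself fixed: if $c_\alpha\neq 0$ then $\xi^{a\cdot\alpha}=1$, i.e. $a_1\alpha_1+\cdots+a_n\alpha_n\equiv 0\pmod d$. Therefore the set of fixed forms of degree $d$ is precisely the $\mathbb{K}$-span of the fixed monomials of degree $d$, and hence $I$ is minimally generated by the subset of monomials of $S_d$ satisfying this congruence, which proves that $I$ is a monomial ideal.

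For the artinian claim, I would observe that for every $1\le i\le n$ the pure power $x_i^d$ satisfies $a_i\cdot d\equiv 0\pmod d$, so $x_i^d$ is a fixed monomial and therefore belongs to $I$. Consequently the complete intersection $(x_1^d,\dots,x_n^d)$ is contained in $I$, making $S/I$ a finite-dimensional $\mathbb{K}$-vector space and thus $I$ artinian.

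There is no real obstacle here: the argument is essentially a direct unpacking of the diagonal action on the monomial basis together with the trivial remark that $x_i^d$ is automatically invariant. The only small subtlety to flag clearly is that the decoupling $c_\alpha(\xi^{a\cdot\alpha}-1)=0$ uses linear independence of the monomials $x^\alpha$ over $\mathbb{K}$, which is what promotes ``form-wise invariance'' to ``monomial-wise invariance'' and gives the monomial generating set.
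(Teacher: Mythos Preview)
Your argument is correct and follows essentially the same route as the paper's proof: the diagonal action sends each monomial to a scalar multiple of itself, so a fixed form has all its supporting monomials fixed, and the pure powers $x_i^d$ are automatically fixed since $\xi^d=1$. The only difference is that you spell out the scalar $\xi^{a\cdot\alpha}$ and the linear-independence step explicitly, whereas the paper leaves these implicit.
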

\begin{proof}
  Since $M_{a_1,\dots ,a_n}$ is a monomial action in the sense that
  for every monomial $m$ of degree $d$ we have
  $M^r_{a_1,\dots ,a_n}m=cm$ for each $0\leq r\leq d-1$ and for some
  $c\in \mathbb{K}$. Then if we have a form of degree $d$ fixed by
  $M_{a_1,\dots ,a_n}$, all its monomials are fixed by
  $M_{a_1,\dots ,a_n}$. This implies that $I$ is a monomial
  ideal. Note also that since $\xi^d=1$, all the monomials
  $x^d_1,x^d_2,\dots ,x^d_n$ are fixed by the action of
  $M_{a_1,\dots ,a_n}$ which means $I$ is artinian ideal.
\end{proof}

Using the above result, from now on we take the monomial set of
generators for $I$. Observe that for two distinct primitive $d$-th
roots of unity we get different actions, but the set of monomials
fixed by both actions are the same. Also the action
$M_{a_n+r,\dots ,a_n+r}$ which is obtained by multiplying the matrix
$M_{a_1,\dots ,a_n}$ with a $d$-th root of unity defines the same
action on degree $d$ monomials in $S$.  In \cite{MM2}, Colarte, Mezzetti, Mir\'o-Roig and Salat show
that in the case that $n=3$ where $a_i$'s are distinct and
$\gcd(a_1,a_2,a_3,d)=1$, these ideals are all monomial Togliatti
systems. In fact they show that the WLP of these ideals fails in
degree $d-1$ by failing injectivity of the multiplication map by a
linear form in that degree. In this section, we study the cases where
WLP of such ideals fail by failing surjectivity in degree $d-1$. Then
we classify all such ideals in polynomial rings with more than $2$
variables, in terms of their WLP.

We start this section by stating some results about the number of
monomials of degree $d$ fixed by the action $M_{a_1,\dots ,a_n}$ of
$\mathbb{Z}/d\mathbb{Z}$ in $S$. In fact we prove that this number
depends on the integers $a_i$'s. In the next result we give an
explicit formula computing the number of such monomials where $n=3$.
\begin{prop}\label{galois mons}
  For integers $a_1,a_2,a_3$ and $d\geq 2$, the number of monomials in
  $ S=\mathbb{K}[x_1,x_2 ,x_3]$ of degree $d$ fixed by the action of
  $M_{a_1,a_2,a_3}$ is

\begin{equation}\label{no.}
  1+\frac{\gcd(a_2-a_1,a_3-a_1,d)\cdot d+\gcd(a_2-a_1,d)+\gcd(a_3-a_1,d)+\gcd(a_3-a_2,d)}{2}.
\end{equation}
\end{prop}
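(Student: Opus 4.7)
The plan is to translate the problem into a lattice-point counting question modulo $d$ and then exploit an involution on the square $\{0,1,\dots,d\}^2$ to reduce to the known structure of a subgroup of $(\mathbb{Z}/d\mathbb{Z})^2$.

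First I would observe that a monomial $x_1^{b_1}x_2^{b_2}x_3^{b_3}$ of degree $d$ is sent by $M_{a_1,a_2,a_3}$ to $\xi^{a_1b_1+a_2b_2+a_3b_3}\, x_1^{b_1}x_2^{b_2}x_3^{b_3}$, so it is fixed exactly when $a_1b_1+a_2b_2+a_3b_3\equiv 0\pmod d$. Substituting $b_1=d-b_2-b_3$ and setting $\alpha=a_2-a_1$, $\beta=a_3-a_1$, this is equivalent to
\[
\alpha b_2+\beta b_3\equiv 0\pmod d,\qquad b_2,b_3\geq 0,\ b_2+b_3\leq d.
\]
Call this count $N$; the task is to prove the formula gives $N$.

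Next I would introduce the involution $\sigma(b_2,b_3)=(d-b_2,d-b_3)$ on the lattice square $Q=\{0,1,\dots,d\}^2$. A direct computation
\[
\alpha(d-b_2)+\beta(d-b_3)\equiv -(\alpha b_2+\beta b_3)\pmod d
\]
shows $\sigma$ permutes the set $S\subset Q$ of solutions, fixes setwise the diagonal $D=\{b_2+b_3=d\}$, and interchanges the open triangles $\{b_2+b_3<d\}$ and $\{b_2+b_3>d\}$. Letting $A=|S|$ and $L=|S\cap D|$, I immediately get $N=\tfrac{A+L}{2}$, since the closed triangle is the union of the open triangle with $D$ and the other open triangle is its $\sigma$-image.

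Then I compute $A$ and $L$ separately. For $A$, consider the homomorphism $\varphi\colon(\mathbb{Z}/d)^2\to\mathbb{Z}/d$ sending $(x,y)\mapsto \alpha x+\beta y$; its image is the subgroup generated by $\alpha,\beta$, which has index $g:=\gcd(\alpha,\beta,d)$ in $\mathbb{Z}/d\mathbb{Z}$, so $|\ker\varphi|=dg$. This counts the solutions in $\{0,\dots,d-1\}^2$. To pass to $Q$, I would add the contributions from the row $b_2=d$ (where the condition reduces to $\beta b_3\equiv 0\pmod d$, giving $\gcd(\beta,d)$ solutions in $\{0,\dots,d-1\}$), the column $b_3=d$ (giving $\gcd(\alpha,d)$ solutions), and the corner $(d,d)$ (always a solution). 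This yields
\[
A=dg+\gcd(\alpha,d)+\gcd(\beta,d)+1.
\]
For $L$, on the diagonal $b_3=d-b_2$ the condition becomes $(\alpha-\beta)b_2\equiv 0\pmod d$, i.e.\ $(a_2-a_3)b_2\equiv 0\pmod d$, and counting $b_2\in\{0,1,\dots,d\}$ gives $L=\gcd(a_3-a_2,d)+1$.

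Substituting in $N=\tfrac{A+L}{2}$ and replacing $\alpha,\beta,g$ by $a_2-a_1,a_3-a_1,\gcd(a_2-a_1,a_3-a_1,d)$ yields the claimed identity. The only part that requires a bit of care is the justification that $|\ker\varphi|=dg$, which I would verify by noting that the image of $\varphi$ equals $g\mathbb{Z}/d\mathbb{Z}$ (of size $d/g$), so the kernel has size $d^2/(d/g)=dg$; everything else is a direct boundary count. I do not expect a substantive obstacle beyond checking these edge counts consistently.
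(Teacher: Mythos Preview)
Your proof is correct and uses essentially the same approach as the paper: reduce to counting solutions of the linear congruence $\alpha b_2+\beta b_3\equiv 0\pmod d$ in the closed triangle, invoke the involution $(b_2,b_3)\mapsto(d-b_2,d-b_3)$, and combine this with the $\gcd$ counts for the number of congruence classes and the boundary/diagonal contributions. Your bookkeeping via the full square $Q$ and the identity $N=(A+L)/2$ is a bit cleaner than the paper's edge-by-edge accounting, but the underlying ideas are identical.
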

\begin{proof}
  From the discussion above, the number of monomials of degree $d$
  fixed by $M_{0,a_2-a_1,a_3-a_1}$ and $M_{a_1,a_2,a_3}$ are the
  same. Thus, we count the number of monomials of degree $d$ fixed by
  $M_{0,a_2-a_1,a_3-a_1}$. Any monomial of degree $d$ in $S$ can be
  written as $x^{d-m-n}y^mz^n$ with $0\leq m,n\leq d$ and $m+n\leq d$
  and it is invariant under the action of $M_{0,a_2-a_1,a_3-a_1}$ if
  and only if $(a_2-a_1)m+(a_3-a_1)n\equiv 0\pmod d$. In \cite[Chapter
  3]{McCarthy}, we find that the number of congruent solutions of
  $(a_2-a_1)m+(a_3-a_1)n\equiv 0\pmod d$ is
  $\gcd(a_2-a_1,a_3-a_1,d)\cdot d$ but since the solutions $(0,0)$,
  $(0,d)$ and $(d,0)$ (corresponding to the powers of variables) are
  all congruent to $d$ and fixed by $M_{0,a_2-a_1,a_3-a_1}$ we get two
  more solutions than $\gcd(a_2-a_1,a_3-a_1,d)\cdot d$. In order to
  count the monomials of degree $d$ invariant under the action of
  $M_{0,a_2-a_1,a_3-a_1}$ we need to count the number of solutions of
  $(a_2-a_1)m+(a_3-a_1)n\equiv 0\pmod d$ satisfying the extra
  condition $m+n\leq d$.

  First we count the number of such solutions when $m=0$ and
  $n\neq 0$. So every $1\leq n< \gcd(a_3-a_1,d)$ is a solution of
  $(a_3-a_1)n\equiv 0\pmod d$. Therefor there are $\gcd(a_3-a_1,d)-1$
  solutions in this case.  Similarly, there are $\gcd(a_2-a_1,d)-1$
  solutions when $n=0$ and $m\neq 0$.  Counting the solutions when
  $m+n=d$ is equivalent to counting the solutions of
  $(a_3-a_2)m\equiv 0\pmod d$ which is similar to the previous case
  and is equal to $\gcd(a_3-a_2,d)-1$. There is also one solution when
  $m=n=0$.

  Now rest of the solutions (where $m\neq 0$ and $n\neq 0$ and
  $m+n\neq d$) by \cite[Chapter $3$]{McCarthy} is equal to
  $\gcd (a_2-a_1,a_3-a_1,d)\cdot d - \gcd(a_3-a_2,d)
  -\gcd(a_2-a_1,d)-\gcd(a_3-a_1,d) +2$
  but we need to count the number of those satisfying $0<m+n<d$. Note
  that if $0< m_0 < d$ and $0< n_0 <d$ is a solution of
  $(a_2-a_1)m+(a_3-a_1)n\equiv 0\pmod d$ then $0<d-m_0< d$ and
  $0< d-n_0 <d$ is also a solution but one and only one of the two
  conditions $0< m_0+n_0< d$ and $0< d-m_0+d-n_0< d$
  is satisfied. Therefore, there are
  $$\frac{\gcd (a_2-a_1,a_3-a_1,d)\cdot d - \gcd(a_3-a_2,d)
    -\gcd(a_2-a_1,d)-\gcd(a_3-a_1,d) +2}{2}$$
  solutions satisfying $0<m+n<d$.  Adding this with the solutions
  where $m=0$ or $n=0$ or $m+n=d$ which we have counted them above
  together with two more pairs $(0,d)$ and $(d,0)$ (explained in the
  beginning of the proof) we get what we wanted to prove.
\end{proof}
For a fixed integer $d\geq 2$ Proposition \ref{galois mons} shows that
how the number of fixed monomials of degree $d$ depends on the
integers $a_1,a_2,a_3$. In the following example we see how they are
distributed.
\begin{exmp}
  Using Formula (\ref{no.}) we count the number of monomials of degree
  $15$ in $\mathbb{K}[x_1,x_2,x_3]$ fixed by the action $M_{0,a,b}$
  for every $0\leq a,b\leq 14$. We see the distribution of them in
  terms of $\mu(I)$ in the following table:
  \begin{center}
    \begin{tabular}{|c|c|c|c|c|c|c|c|c|c|c|}
      \hline $m$ & 10 & 11 & 12& 13& 17&28&34&46&51&136\\
      \hline $d_{m}$ & 24 & 72 & 24& 48 & 24 & 12 & 12 & 2& 6 & 1\\
      \hline 
    \end{tabular}
  \end{center}
  where 	$d_{m}=\vert \lbrace (a,b)\mid \mu(I)=m\rbrace\vert $.  Note
  that the last column of the table corresponds to the action $M_{0,0,0}$
  where we get $\mu(I)=(\mathbb{K}[x_1,x_2,x_3])_{15}=136$. There are
  exactly $24$ pairs $(a,b)$ where either at least one of them is zero
  or $a=b$, which in these cases we get $\mu(I)=17$. We have
  $\gcd(a,b,d)\neq 1$ for all the cases with $\mu(I)>17$ and
  $\gcd(a,b,d)= 1$ for all the cases with $\mu(I)<17$.
\end{exmp}
As we saw in the above example the distribution of the number of
monomials of degree $d$ fixed by $M_{a_1,a_2,a_3}$ is quite difficult
to understand but we prove that such numbers are bounded from above
depending on the prime factors of $d$ in the case that $a_i$'s are
distinct and $\gcd(a_1,a_2,a_3,d)=1$ .
\begin{prop}
  For $d\geq 3$ and distinct integers $0\leq a_1,a_2,a_3\leq d-1$ with
  $\gcd(a_1,a_2,a_3,d)=1$, let $\mu(I)$ be the number of monomials of
  degree $d$ fixed by $M_{a_1,a_2,a_3}$. Then
  \[ \mu(I) \leq \left\{ \begin{array}{ll}
                           \frac{(p+1)d+p^2+3p}{2p} & \mbox{if $p^2\nmid d$}\\
                           \frac{(p+1)d+4p}{2p} & \mbox{if
                                                  $p^2\mid d,$}
                         \end{array} \right. \] 
                       where $p$ is the smallest prime dividing $d$. Moreover, the bounds are sharp.
                     \end{prop}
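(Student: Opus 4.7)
The plan is to reduce the question to Proposition~\ref{galois mons} and then extract a sharp upper bound on $g_{12}+g_{13}+g_{23}$ by arithmetic reasoning. First I would normalize by replacing $(a_1,a_2,a_3)$ with $(0,a_2-a_1,a_3-a_1)$, which preserves the set of fixed monomials of degree $d$ and hence $\mu(I)$; the hypothesis then becomes $\gcd(a_2,a_3,d)=1$. Setting $g_{12}=\gcd(a_2,d)$, $g_{13}=\gcd(a_3,d)$, $g_{23}=\gcd(a_3-a_2,d)$, all proper divisors of $d$ by distinctness of the $a_i$, Proposition~\ref{galois mons} yields
\[
\mu(I)=1+\tfrac{1}{2}\bigl(d+g_{12}+g_{13}+g_{23}\bigr),
\]
so both claims reduce to showing $g_{12}+g_{13}+g_{23}\leq d/p+p+1$ when $p^2\nmid d$ and $g_{12}+g_{13}+g_{23}\leq d/p+2$ when $p^2\mid d$.

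The key structural lemma I would establish is that $g_{12},g_{13},g_{23}$ are \emph{pairwise coprime}. I would argue prime-by-prime: for each $q\mid d$, the hypothesis forces $q\nmid a_2$ or $q\nmid a_3$; say $q\nmid a_2$, so $v_q(g_{12})=0$. If in addition $q\mid a_3$, the ultrametric inequality gives $v_q(a_3-a_2)=v_q(a_2)=0$, whence $v_q(g_{23})=0$; otherwise $q\nmid a_3$ and $v_q(g_{13})=0$. In every case $q$ divides at most one of the three $g_{ij}$, so they are pairwise coprime and $g_{12}g_{13}g_{23}$ divides $d$.

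The remaining step is a combinatorial optimization: among ordered triples $a\geq b\geq c$ of pairwise coprime proper divisors of $d$, maximize $a+b+c$. Since each is a proper divisor we have $a\leq d/p$, and pairwise coprimality gives $bc\mid d/a$; the extremum occurs at $a=d/p$, where $bc\leq p$. If $p^2\nmid d$ then $\gcd(d/p,p)=1$, so the admissible choice $(b,c)=(p,1)$ gives sum $d/p+p+1$; if $p^2\mid d$ then $p\mid d/p$, so $b$ and $c$ must be coprime to $p$, and combined with $bc\leq p$ this forces $b=c=1$ and sum $d/p+2$. Using $b+c\leq bc+1$ one checks quickly that no other value of $a$ does better. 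Substituting into the formula above yields the two stated bounds. For sharpness I would exhibit $(a_1,a_2,a_3)=(0,d/p,p)$ when $p^2\nmid d$ (a short calculation shows $\gcd(p-d/p,d)=1$) and $(0,d/p,1)$ when $p^2\mid d$ (where $\gcd(d/p-1,d)=1$ since $p\mid d/p$ forces $d/p-1\equiv -1\pmod p$); in each case the extremal triple $(g_{12},g_{13},g_{23})$ is realized so that equality holds.

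The main obstacle is the pairwise coprimality lemma: without it only the weaker estimate $g_{12}+g_{13}+g_{23}\leq 3d/p$ is available, and the crucial dichotomy between $p^2\mid d$ and $p^2\nmid d$ would disappear entirely. Once this lemma is in place the remainder is elementary divisor combinatorics together with the verification of the two explicit extremal families.
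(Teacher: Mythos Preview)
Your approach coincides with the paper's: both reduce via Proposition~\ref{galois mons} to bounding $g_{12}+g_{13}+g_{23}$, use that their product divides $d$, optimize over divisors, and exhibit the same sharpness witnesses $(0,p,d/p)$ and $(0,1,d/p)$. The one substantive difference is your pairwise-coprimality lemma: the paper simply \emph{asserts} without proof that $g_{12}g_{13}g_{23}\mid d$ (writing $d=g_{12}g_{13}g_{23}t$), whereas you prove the stronger fact that the $g_{ij}$ are pairwise coprime via a clean prime-by-prime argument, and this makes the divisor optimization noticeably more transparent. Your phrase ``one checks quickly that no other value of $a$ does better'' hides a bit more case analysis than it suggests---particularly in the $p^2\mid d$ case, where one must use that among two coprime proper divisors $x,y$ of $d$ with $xy\mid d$ at most one can be divisible by $p$---but this is routine and the paper's own optimization step is at least as sketchy (its inequality $g_{12}+d/g_{12}\le p+d/p$ tacitly assumes $g_{12}>1$, for instance).
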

                     \begin{proof}
                       Using Proposition \ref{galois mons} we provide
                       an upper bound for
                       $\gcd(a_2-a_1,d)+\gcd(a_3-a_1,d)+\gcd(a_3-a_2,d)$.
                       For some integer $t$ we have
                       $d=\gcd(a_2-a_1,d)\cdot\gcd(a_3-a_1,d)\cdot\gcd(a_3-a_2,d)\cdot
                       t$.
                       Since
                       $\gcd(a_3-a_1,d)\cdot
                       \gcd(a_3-a_2,d)=\frac{d}{\gcd(a_2-a_1,d)\cdot
                         t}$, we have
                       $$\gcd(a_3-a_1,d) + \gcd(a_3-a_2,d)\leq
                       1+\frac{d}{\gcd(a_3-a_2,d)\cdot t}.$$ Therefore,
                       \begin{align*}
                         \gcd(a_2-a_1,d)+\gcd(a_3-a_1,d)+\gcd(a_3-a_2,d)\leq & \gcd(a_2-a_1,d)+\frac{d}{\gcd(a_3-a_1,d)\cdot t}+1\\ \leq  &\gcd(a_2-a_1,d)+\frac{d}{\gcd(a_2-a_1,d)}+1\\ \leq & p+\frac{d}{p}+1.
                       \end{align*}
                       Note that, $\gcd(a_2-a_1,d)+\gcd(a_3-a_1,d)+\gcd(a_3-a_2,d)=d+2> p+\frac{d}{p}+1$ if and only if at least two integers $a_i$ are the same which contradicts the assumption. Since for every $q\geq p$ we have $p+\frac{d}{p}+1\geq q+\frac{d}{q}+1$ we get $\gcd(a_2-a_1,d)+\gcd(a_3-a_1,d)+\gcd(a_3-a_2,d)\leq p+\frac{d}{p}+1$.\\
                       Now assume that $p^2\nmid d$, to reach the
                       bound we let $a_2-a_1=p$ and
                       $a_3-a_1=\frac{d}{p}$. In this case since
                       we have that $\gcd(a_3-a_2,d)=1$, Proposition \ref{galois
                         mons} implies that
                       $\mu(I)\leq \frac{(p+1)d+p^2+3p}{2p}$.  \\If
                       $p^2\mid d$, choosing $a_2-a_1=p$ and
                       $a_3-a_1=\dfrac{d}{p}$ implies that
                       $\gcd(a_3-a_2,d)=p$. So the given bound can not
                       be sharp. Observe that for $q> p$ and $q\mid d$
                       we have $q+\frac{d}{q}+1\leq 1+\frac{d}{p}+1$.
                       Therefore in this case we have
                       $\gcd(a_2-a_1,d)+\gcd(a_3-a_1,d)+\gcd(a_3-a_2,d)\leq
                       1+\frac{d}{p}+1$,
                       and equality holds for $a_2-a_1=1$ and
                       $a_3-a_1=\frac{d}{p}$ so by Proposition
                       \ref{galois mons} we have that
                       $\mu(I)\leq \frac{(p+1)d+4p}{2p}$.
                     \end{proof}
                     In the proof of Proposition \ref{galois mons}, we
                     used the fact that the number of solutions
                     $(m,n)$ for
                     $(a_2-a_1)m+(a_3-a_1)n\equiv 0\pmod d$
                     (corresponding to the action by
                     $M_{a_1,a_2,a_3}$) where $m,n\neq 0$ and
                     $m+n\neq d$ is exactly twice the number of
                     solutions of $(b-a)m+(c-a)n\equiv 0\pmod d$
                     satisfying $0<m+n<d$. But in the polynomial ring
                     with more than three variables this is no longer
                     the case that the solutions of the corresponding
                     equation of $M_{a_1,\dots ,a_n}$ are distributed
                     in a nice way so we do not have the explicit
                     formula as in Proposition \ref{galois mons} in
                     higher number of variables.  In
                     Proposition \ref{g4} below we provide an upper
                     bound for this number in the polynomial ring with
                     four variables where
                     $\gcd(a_1,a_2,a_3,a_4,d)=1$. The bound implies
                     $H_{S/I}(d-1)\leq H_{S/I}(d)$ and therefore the
                     WLP in degree $d-1$ is an assertion of
                     injectivity. In \cite[Theorem $4.8$]{MM3}, Colarte, Mezzetti, Mir\'o-Roig and Salat show that the number of monomials in $(\mathbb{K}[x_1,\dots ,x_n])_{n+1}$ fixed  by the action $M_{0,1,2,\dots ,n}$ of $\mathbb{Z}/({n+1})\mathbb{Z}$ is bounded above by $\binom{2n-1}{n-1}$, for any $n\geq 3$.
                     \begin{prop}\label{g4}
                       For $d\geq 2$ and integers
                       $0\leq a_1,a_2,a_3,a_4\leq d-1$, where at most
                       two of the integers among $a_i$'s are equal and
                       $\gcd(a_1,a_2,a_3,a_4,d)=1$. Let $\mu(I)$ be
                       the number of monomials of degree $d$ in
                       $S=\mathbb{K}[x_1,x_2,x_3,x_4]$ fixed by
                       $M_{a_1,a_2,a_3,a_4}$. Then
$$
\mu(I)\leq 1+\frac{(d+2)(d+1)}{2}.
$$ 
\end{prop}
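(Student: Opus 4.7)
The plan is to slice the count of fixed degree-$d$ monomials by the exponent of one variable and reduce to a counting problem in three variables, invoking Proposition \ref{galois mons} (and a mild generalization of it to sub-degrees).

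By the simultaneous shift $a_i \mapsto a_i - a_1 \pmod{d}$, which leaves the set of fixed degree-$d$ monomials invariant since $\xi^{c\cdot d} = 1$, we may assume $a_1 = 0$. Under the hypothesis that at most two of the $a_i$'s coincide, we are in one of two essentially different cases: (A) all four $a_i$'s are distinct, or (B) after relabeling, $a_1 = a_2 = 0$ and $a_3, a_4$ are distinct and nonzero, and the gcd condition forces $\gcd(a_3, a_4, d) = 1$.

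Slicing by $k = m_1$ yields $\mu(I) = \sum_{k=0}^{d} M_k$ with
\[
M_k \;=\; \bigl|\{(m_2, m_3, m_4) \in \mathbb{Z}_{\geq 0}^3 \colon m_2 + m_3 + m_4 = d-k,\ a_2 m_2 + a_3 m_3 + a_4 m_4 \equiv 0 \pmod{d}\}\bigr|.
\]
Each $M_k$ is the number of monomials of degree $d - k$ in $\mathbb{K}[x_2, x_3, x_4]$ fixed by the restricted action $M_{a_2, a_3, a_4}$. The same kind of argument used in the proof of Proposition \ref{galois mons}—namely counting solutions of a linear congruence in a triangle of lattice points—produces a closed-form expression for $M_k$ in terms of the greatest common divisors $\gcd(a_i, d)$ and $\gcd(a_i - a_j, d)$, now applied in degree $d - k$ rather than $d$.

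Summing these expressions, one checks that in case (A) the four distinct $a_i$'s keep each $\gcd$-term small, so that $\sum_k M_k$ stays comfortably below the bound; and in case (B) the count collapses to the weighted sum $\mu(I) = \sum_{(m_3, m_4) \in T} (d + 1 - m_3 - m_4)$, where $T$ is the set of lattice points in the triangle $\{m_3, m_4 \geq 0,\ m_3 + m_4 \leq d\}$ satisfying $a_3 m_3 + a_4 m_4 \equiv 0 \pmod{d}$. The hypothesis $\gcd(a_3, a_4, d) = 1$ then constrains $T$ enough for the weighted sum to remain at most $1 + \binom{d+2}{2}$, with the ``$+1$'' accounting for the contribution of the single corner monomial $x_1^d$ supplied by the $k = d$ slice.

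The main obstacle is the weighted-sum estimate in case (B): when $\gcd(a_4 - a_3, d)$ is large, the points of $T$ can cluster near the corner $(0, 0)$ (the point carrying the largest weight $d + 1$), potentially inflating the sum. The gcd hypothesis $\gcd(a_3, a_4, d) = 1$ is precisely what prevents the cluster from exceeding the budget $1 + \binom{d+2}{2}$; without it—or without the ``at most two equal'' restriction that excludes the pattern $a_1 = a_2 = a_3$—the bound would be violated, as the extremal case of three coincident $a_i$'s gives $\mu(I)$ exactly equal to $1 + \binom{d+2}{2}$.
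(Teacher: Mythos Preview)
Your slicing idea is natural, but the proof as written has two genuine gaps, and neither is a routine check.

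\textbf{Case (A).} You claim that Proposition~\ref{galois mons} admits a ``mild generalization to sub-degrees'' giving a closed form for each $M_k$. It does not, at least not by the method of that proposition. The key step there is the involution $(m,n)\mapsto (d-m,d-n)$, which pairs interior solutions with $m+n<d$ bijectively with interior solutions with $m+n>d$; this works precisely because the congruence is modulo $d$ and the triangle has legs of length $d$. In your slice $M_k$ the congruence is still modulo $d$ but the triangle has legs $d-k$, and the involution no longer maps the triangle to its complement in any useful way. So you have no formula for $M_k$ when $k\geq 1$, and ``one checks that the sum stays comfortably below the bound'' is not a proof.

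\textbf{Case (B).} Your weighted-sum expression $\mu(I)=\sum_{(m_3,m_4)\in T}(d+1-m_3-m_4)$ is correct, and you correctly identify the obstacle: points of $T$ clustering near the origin. But you then simply assert that $\gcd(a_3,a_4,d)=1$ ``is precisely what prevents the cluster from exceeding the budget'' without any argument. This is the entire content of the estimate; stating it is not proving it. (Incidentally, your final remark is slightly off: three coincident $a_i$'s give $\mu(I)$ equal to the bound, not exceeding it, so the bound would be attained rather than violated.)

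The paper proceeds quite differently and avoids slicing altogether. It works directly with the congruence $(a_1-a_4)m_1+(a_2-a_4)m_2+(a_3-a_4)m_3\equiv 0\pmod d$ in the simplex $m_1+m_2+m_3\leq d$, uses the three-dimensional involution $(m_1,m_2,m_3)\mapsto(d-m_1,d-m_2,d-m_3)$ on solutions with all coordinates nonzero to show at most half of them lie in the simplex, and combines this with explicit boundary counts (via Proposition~\ref{galois mons}, applied in its original degree-$d$ form) to obtain
\[
\mu(I)\leq \frac{d^2+\sum_{1\leq i<j\leq 4}\gcd(a_i-a_j,d)+1}{2}.
\]
The proof is then completed by a short case analysis showing $\sum_{i<j}\gcd(a_i-a_j,d)\leq 3d+3$ under the hypotheses. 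This sidesteps both of your missing estimates by never having to count in a sub-degree triangle and never having to control a weighted sum.
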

\begin{proof}
  Any monomial of degree $d$ in $S$ can be written as
  $x^{m_1}_1x^{m_2}_2x^{m_3}_3x^{d-m_1-m_2-m_3}_4$ with
  $0\leq m_1 , m_2,m_3\leq d$ and $m_1+m_2+m_3\leq d$. Monomial
  $x^{m_1}_1x^{m_2}_2x^{m_3}_3x^{d-m_1-m_2-m_3}_4$ is invariant under
  the action of $M_{a_1,a_2,a_3,a_4}$ or equivalently
  $M_{a_1-a_4,a_2-a_4,a_3-a_4,0}$ if and only if
  \begin{equation}\label{equa}
    (a_1-a_4)m_1+(a_2-a_4)m_2+(a_3-a_4)m_3\equiv 0\pmod d,  \hspace*{0.5 cm}  m_1+m_2+m_3\leq d.
  \end{equation}In \cite[Chapter $3$]{McCarthy} we find that the number of congruent solutions of $(a_1-a_4)m_1+(a_2-a_4)m_2+(a_3-a_4)m_3\equiv 0\pmod d$ is $d^2$. We first count the number of congruent solutions of \ref{equa} where at least one of $m_1,m_2$ or $m_3$ is zero. Suppose $m_1=0$ then by \cite[Chapter $3$]{McCarthy}, the number of congruent solutions of $(a_2-a_4)m_2+(a_3-a_4)m_3\equiv 0\pmod d$ is $gcd(a_2-a_4,a_3-a_4,d)\cdot d$. Similarly, by \cite[Chapter $3$]{McCarthy}, the number of congruent solutions of \ref{equa} having two coordinates  zero, for example $m_1=m_2=0$, is $gcd(a_3-a_4,d)$. All together the number of congruent solutions of \ref{equa} where at least one of the coordinates $m_1,m_2,m_3$ is zero is as follows 
  \begin{align*}
    & d\left( gcd(a_1-a_4,a_2-a_4,d)+gcd(a_2-a_4,a_3-a_4,d)+gcd(a_1-a_4,a_3-a_4,d)\right)\\
    & -gcd(a_1-a_4,d)-gcd(a_2-a_4,d)-gcd(a_3-a_4,d)+1.
  \end{align*}
  Note that if $(m_{10},m_{20},m_{30})$ is a solution of \ref{equa}
  such that $m_{i0}\neq 0$ for $i=1,2,3$, then
  $(d-m_{10},d-m_{20},d-m_{30})$ is a solution of
  $(a_1-a_4)m_1+(a_2-a_4)m_2+(a_3-a_4)m_3\equiv 0\pmod d$ where
  $3d-m_{10}-m_{20}-m_{30}\geq d$. Therefor, the number of congruent
  solutions of \ref{equa} where no $m_i$ is zero is bounded from above
  by
  \begin{align*}
    [d^2-(d&(gcd(a_1-a_4,a_2-a_4,d)+gcd(a_2-a_4,a_3-a_4,d)+gcd(a_1-a_4,a_3-a_4,d))\\
           &-gcd(a_1-a_4,d)-gcd(a_2-a_4,d)-gcd(a_3-a_4,d)+1)]/2.
  \end{align*}
  Using Proposition \ref{galois mons}, we count the number of
  solutions \ref{equa} where at least one of the coordinates $m_i$ is
  zero. If $m_1=0$ then by Proposition \ref{galois mons} the number of
  solutions of $(a_2-a_4)m_2+(a_3-a_4)m_3=0\pmod d$ where
  $0\leq m_2,m_3\leq d$ and $m_2+m_3\leq d$ is
$$
\frac{\gcd(a_2-a_4,a_3-a_4,d)\cdot
  d+\gcd(a_2-a_4,d)+\gcd(a_3-a_4,d)+\gcd(a_2-a_3,d)+2}{2}.
$$
Similarly we can count the number of such solutions when $m_2=0$ or
$m_3=0$. Now suppose that $m_1=m_2=0$ then we get $\gcd(a_3-a_4,d)+1$
where $0\leq m_3\leq d$. All together the number of solutions of
\ref{equa} where at least one $m_i$ is zero is
\begin{align*}
  [&d(\gcd(a_1-a_4,a_2-a_4,d)+\gcd(a_2-a_4,a_3-a_4,d)+\gcd(a_1-a_4,a_3-a_4,d))\\
   &+\gcd(a_1-a_2,d)+\gcd(a_1-a_3,d)+\gcd(a_2-a_3,d)+2]/2.
\end{align*}
Therefore, the number of solutions of \ref{equa} is bounded from above
by
\begin{equation}\label{solutions}
  \frac{d^2+\sum^3_{i=1}\gcd(a_i-a_4,d)+\sum_{1\leq i<j\leq 3}\gcd(a_i-a_j,d)+1}{2}.
\end{equation}
To show the assertion of the theorem we need to show (\ref{solutions})
is bounded from above by $\frac{(d+2)(d+1)+2}{2}$ where at most 2 of
integers among $a_i$'s are equal and $\gcd(a_1,a_2,a_3,a_4,d)=1$. So
we need to show that
\begin{equation}\label{last}
  \sum^3_{i=1}\gcd(a_i-a_4,d)+\sum_{1\leq i<j\leq 3}\gcd(a_i-a_j,d)=\sum^{}_{1\leq i\leq j\leq 4}\gcd(a_i-a_j,d)\leq 3d+3.
\end{equation}
To show this we consider the following cases:
\begin{itemize}
\item[$(1)$]Suppose at least two terms in the left hand side of
  (\ref{last}) are equal to $d$ then at least three integers among
  $a_i$'s are equal which contradicts the assumption.
\item[$(2)$] Suppose that one of the terms in the left hand side is
  equal to $d$. By relabeling the indices we may assume that
  $\gcd(a_1-a_2,d)=d$, this implies that $a_1=a_2$ then we need to
  show that
$$
d+2\gcd(a_1-a_3,d)+2\gcd(a_1-a_4,d)+\gcd(a_3-a_4,d)\leq 3d+3
$$
since we assume that $\gcd(a_1,a_2,a_3,a_4,d)=1$ we have that
$\gcd(a_1-a_4,d)$, $\gcd(a_3-a_4,d) $ and $\gcd(a_1-a_3,d)$ are all
distinct and strictly less than $d$. Thus we have
$$d+2\gcd(a_1-a_3,d)+2\gcd(a_1-a_4,d)+\gcd(a_3-a_4,d)\leq
d+2\frac{d}{2}+2\frac{d}{3}+\frac{d}{4} <3d+3 .$$
\item[$(3)$] Suppose all the terms in the left hand side of
  (\ref{last}) are strictly less than $d$. Then the assumption
  $\gcd(a_1,a_2,a_3,a_4,d)=1$ implies that at most two terms can be
  $d/2$ and assuming the other terms are $d/3$ we get
$$\sum^{}_{1\leq i\leq j\leq 4}\gcd(a_i-a_j,d)\leq 3d+3\leq 2(d/2)+4(d/3)=d+d/2<3d+3.$$
\end{itemize}

\end{proof}
In the rest of this section we study the WLP of ideals in
$S=\mathbb{K}[x_1,\dots ,x_n]$ for $n\geq 3$ generated by all forms of
degree $d\geq 3$ invariant by the action $M_{a_1,\dots ,a_n}$ of
$\mathbb{Z}/d\mathbb{Z}$. First we prove the following  key lemma.
\begin{lem}\label{form}
  For integer $d\geq 2$ and distinct integers
  $0\leq a_1,a_2,a_3\leq d-1$, let $M_{a_1,a_2,a_3}$ be a
  representation of $\mathbb{Z}/d\mathbb{Z}$. Define the linear form
 $$L = \sum^l_{j=0}\xi^j x_1+\sum^{l+k+1}_{j=l+1}\xi^j x_2+\sum^{2d-1}_{j=l+k+2}\xi^j x_3,$$
 where $l$ and $k$ are the residues of $a_2-a_3-1$ and $a_3-a_1-1$
 modulo $d$.  Then the support of the form $F = L^d-\overline{L}^d$
 are exactly the monomials of degree $d$ in $\mathbb{K}[x_1,x_2,x_3]$
 which are not invariant under the action of $M_{a_1,a_2,a_3}$, where
 $\overline{L}$ is the conjugate of $L$ and $\xi$ is a primitive
 $d$-th root of unity.
\end{lem}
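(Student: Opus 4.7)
The plan is to realise $L$ as $L=\alpha_1x_1+\alpha_2x_2+\alpha_3x_3$ with $\alpha_i\in\mathbb{C}$ the partial sum of $\xi^j$ over the corresponding index range, and then expand $F=L^d-\overline{L}^d$ by the multinomial theorem. This produces
\[
F=\sum_{m_1+m_2+m_3=d}\binom{d}{m_1,m_2,m_3}\bigl(\alpha_1^{m_1}\alpha_2^{m_2}\alpha_3^{m_3}-\overline{\alpha_1}^{m_1}\overline{\alpha_2}^{m_2}\overline{\alpha_3}^{m_3}\bigr)x_1^{m_1}x_2^{m_2}x_3^{m_3},
\]
so the coefficient of $x_1^{m_1}x_2^{m_2}x_3^{m_3}$ vanishes iff $\alpha_1^{m_1}\alpha_2^{m_2}\alpha_3^{m_3}$ is real. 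It therefore suffices to show, under the distinctness hypothesis on $a_1,a_2,a_3$, that each $\alpha_i$ is nonzero and that $\alpha_1^{m_1}\alpha_2^{m_2}\alpha_3^{m_3}\in\mathbb{R}$ is equivalent to the $M_{a_1,a_2,a_3}$-invariance condition $a_1m_1+a_2m_2+a_3m_3\equiv 0\pmod d$.

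For non-vanishing I would use the closed form $\alpha_i=\xi^{r_i}(\xi^{n_i}-1)/(\xi-1)$, with $(r_i,n_i)$ read off from the three index ranges. Vanishing forces $n_i\equiv 0\pmod d$, and substituting $l\equiv a_2-a_3-1$ and $k\equiv a_3-a_1-1\pmod d$ makes the three possible vanishing conditions coincide with $a_2\equiv a_3$, $a_3\equiv a_1$, and $a_1\equiv a_2\pmod d$ respectively, all excluded. For the polar form, the half-angle identity $\xi^n-1=2i\sin(\pi n/d)e^{i\pi n/d}$ gives each $\alpha_i=\rho_i e^{i\theta_i}$ with positive real $\rho_i$ and arguments
\[
\theta_1\equiv\tfrac{\pi l}{d},\qquad \theta_2\equiv\tfrac{\pi(2l+k+2)}{d},\qquad \theta_3\equiv\tfrac{\pi(l+k+1)}{d}\pmod{\pi}.
\]
Since a nonzero complex number is real precisely when its argument lies in $\pi\mathbb{Z}$, the realness of $\alpha_1^{m_1}\alpha_2^{m_2}\alpha_3^{m_3}$ is equivalent to the congruence $l m_1+(2l+k+2)m_2+(l+k+1)m_3\equiv 0\pmod d$.

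The final step is a direct computation identifying this congruence with the invariance condition. Plugging in $l\equiv a_2-a_3-1$ and $k\equiv a_3-a_1-1$ and using $m_1+m_2+m_3=d$ once to absorb the $-(m_1+m_2+m_3)$ term, the left hand side becomes $(a_2-a_3)m_1+(2a_2-a_3-a_1)m_2+(a_2-a_1)m_3\pmod d$. Regrouping by the $a_j$'s and applying $m_1+m_2+m_3=d$ a second time rewrites this as $a_2(d+m_2)-a_3(d-m_3)-a_1(d-m_1)\equiv a_1m_1+a_2m_2+a_3m_3\pmod d$, which is exactly the condition that $x_1^{m_1}x_2^{m_2}x_3^{m_3}$ be fixed by $M_{a_1,a_2,a_3}$. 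The point that will require most care is the argument of $\alpha_3$: since its defining range crosses the cyclotomic period at $j=d$, the naive computation picks up an extra phase of $\pi$. The cleanest way around this is to use $\alpha_3=-(\xi^{l+k+2}-1)/(\xi-1)$ (equivalently $\alpha_1+\alpha_2+\alpha_3=\sum_{j=0}^{2d-1}\xi^j=0$) and then to work only modulo $\pi$, where the spurious factor disappears.
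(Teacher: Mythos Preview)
Your proposal is correct and follows essentially the same route as the paper's proof: both extract the argument of each coefficient $\alpha_i$ (the paper via the midpoint symmetry $\sum_{p}^{q}\xi^j=\xi^{(p+q)/2}\cdot(\text{real})$, you via the geometric-sum/half-angle identity), reduce the realness of $\alpha_1^{m_1}\alpha_2^{m_2}\alpha_3^{m_3}$ to the congruence $lm_1+(2l+k+2)m_2+(l+k+1)m_3\equiv 0\pmod d$, and then identify this with invariance under $M_{a_1,a_2,a_3}$. The only cosmetic difference is that the paper phrases the last step as recognising the action $M_{l,\,2l+k+2,\,l+k+1}$ and shifting all indices by $a_1-a_2+a_3+1$, whereas you substitute directly; and you are a bit more explicit about the $\alpha_3$ phase and the non-vanishing of the $\alpha_i$, which the paper handles in one line.
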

\begin{proof}
  First, note that for a rational number $j$ we let
  $\xi^j = e^{j\frac{2\pi i}{2}}$. We observe that for integers
  $0\leq p\leq q$ we have
  $\sum^{q}_{p}\xi^i =
  \xi^{\frac{p+q}{2}}\sum^{q}_{p}\xi^{i-{\frac{p+q}{2}}}$,
  where
  $\sum^{q}_{p}\xi^{i-{\frac{p+q}{2}}} =
  \xi^{\frac{p-q}{2}}+\xi^{{\frac{p-q}{2}}+1}+\cdots
  +\xi^{\frac{q-p}{2}-1}+\xi^{\frac{q-p}{2}}$
  which is invariant under conjugation, so it is a real
  number. Therefore, we have
 $$L = \sum^l_{j=0}\xi^j x_1+\sum^{l+k+1}_{j=l+1}\xi^j x_2+\sum^{2d-1}_{j=l+k+2}\xi^j x_3= r_1\xi^{\frac{l}{2}} x_1 + r_2\xi^{\frac{2l+k+2}{2}} x_2 + r_3\xi^{\frac{l+k+1}{2}}x_3$$
 where $r_1,r_2$ and $r_3$ are non-zero real numbers. In fact, using
 the assumption that $a_1$, $a_2$ and $a_3$ are distinct we get that
 $0\leq l,k\leq d-2$ which implies that the $r_i$'s are all
 non-zero. The form $F$ can be written as
 \begin{align*}
   F &= L^d-\overline {L}^d\\ &= \left( r_1\xi^{\frac{l}{2}} x_1 + r_2\xi^{\frac{2l+k+2}{2}} x_2 + r_3\xi^{\frac{l+k+1}{2}}x_3\right) ^d - \left( r_1\xi^{-\frac{l}{2}} x_1 + r_2\xi^{-\frac{2l+k+2}{2}} x_2 + r_3\xi^{-\frac{l+k+1}{2}}x_3\right)^d.
 \end{align*}
 Consider monomial $m=x^{\alpha_1}_1 x^{\alpha_2}_2 x^{\alpha_3}_3$ of
 degree $d$ in $\mathbb{K}[x_1,x_2,x_3]$. The coefficient of $m$ in
 $F$ is zero if and only if the coefficients of $m$ in $L^d$ is
 real. The coefficient of $m$ in $L^d$ is real if and only if
 $$\alpha_1\frac{l}{2}+\alpha_2\frac{2l+k+2}{2}+\alpha_3\frac{l+k+1}{2}\equiv
 \alpha_1\frac{- l}{2}+\alpha_2\frac{-(2l+k+2)}{2}+\alpha_3
 \frac{-(l+k+1)}{2} \pmod d$$ which is equivalent to have
 $$\alpha_1 l+\alpha_2 (2l+k+2)+\alpha_3 (l+k+1)\equiv 0, \pmod d.$$
 Therefore, the monomials with non-zero coefficients in $F$ are
 exactly the monomials of degree $d$ in $\mathbb{K}[x_1,x_2,x_3]$,
 which are not fixed by the action of
 $M_{l,{2l+k+2},{l+k+1}}$. Substituting $l,k$ we get that
 $M_{l,{2l+k+2},{l+k+1}}$ is equivalent to the action
 $M_{a_2-a_3-1,2a_2-a_3-a_1-1,a_2-a_1-1}$ and by adding the indices
 with $a_1-a_2+a_3+1$ the last one is also equivalent to
 $M_{a_1,a_2,a_3}$ which proves what we wanted.
\end{proof}
\begin{rem}
  The assumption in Lemma \ref{form} that $a_i$'s are distinct is
  necessary to have the form $F$ non-zero. If at least two of the
  integers $a_i$ are equal then in the linear form $L$ at least the
  coefficient of one of the variables $x_1,x_2$ and $x_3$ is
  zero. Then we conclude that in $L^d$ all the monomials have real
  coefficients which implies $F=0$.

  Lemma \ref{form}, can be extended to any polynomial ring with odd
  number of variables. In fact in this case we can find $n-1$ integers
  $l_i$ in terms of the integers $a_i$ defining the action
  $M_{a_1,\dots ,a_n}$ in such a way that a similar linear form as $L$
  in the lemma in $n$ variables does the same.
\end{rem}
In \cite[Proposition $4.6$]{MM3}, Colarte, Mezzetti, Mir\'o-Roig and Salat show that
the WLP of $I$ fails by failing injectivity in degree $d-1$ in the
polynomial ring $\mathbb{K}[x_1,\dots ,x_n]$. In fact they provide the non-zero
form $f=\prod^{d-1}_{i=1}(\xi^{ia_1}x_1+\cdots +\xi^{ia_n}x_n) $ in the kernel of the multiplication map by a linear form on
artinian algebra $\mathbb{K}[x_1,\dots ,x_n]/I$ from degree $d-1$ to
degree $d$. So all the monomials with non-zero coefficient in
 $(x_1+\cdots+x_n)f$ are fixed by the action $M_{a_1,\dots ,a_n}$.
\label{form ker}
  
We can now state and prove our main theorem which generalizes \cite[Proposition $3.2$]{MM2} and \cite[Proposition $4.6$]{MM3} and gives the complete classification of ideals in $S=\mathbb{K}[x_1,\dots ,x_n]$ generated by all forms of degree $d$ fixed by the action of $M_{a_1,\dots ,a_n}$, for every $n\geq 3$ and $d\geq 2$, in terms of their WLP. \\

\begin{thm}\label{sur}
  For integers $d\geq 2$, $n\geq 3$ and
  $0\leq a_1,\dots ,a_n\leq d-1$, let $M_{a_1,\dots ,a_n}$ be a
  representation of cyclic group $\mathbb{Z}/d\mathbb{Z}$ and
  $I\subset S=\mathbb{K}[x_1,\dots ,x_n]$ be the ideal generated by
  all forms of degree $d$ fixed by the action of $M_{a_1,\dots ,a_n}$.
  Then, $I$ satisfies the WLP if and only if at least $n-1$ of the
  integers $a_i$ are equal.
\end{thm}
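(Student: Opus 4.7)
My plan for the ``if'' direction is to reduce by symmetry to $a_2 = \cdots = a_n = b$. If additionally $a_1 = b$, then $I = \mathfrak{m}^d$ and the WLP is immediate. Otherwise set $g := \gcd(a_1-b, d)$ and $e := d/g \geq 2$; the constraint $\sum \alpha_i = d$ reduces the invariance condition $\sum a_i \alpha_i \equiv 0 \pmod d$ to $(a_1-b)\alpha_1 \equiv 0 \pmod d$, equivalently $e \mid \alpha_1$. Thus $(I^{-1})_d$ is spanned by the monomials $y^\alpha$ with $e \nmid \alpha_1$. By Proposition~\ref{mon} together with the fact that surjectivity of $\times \ell$ in one degree forces surjectivity in all higher degrees, it suffices to prove that $\times \ell \colon (S/I)_{d-1} \to (S/I)_d$ is surjective (injectivity in lower degrees is automatic because $I$ is generated in degree $d$), or dually that $\circ \ell$ is injective on $(I^{-1})_d$.

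I will verify this injectivity by the following cascade. Given $F$ in the kernel of $\circ \ell$, I decompose $F = F_1 + \cdots + F_{e-1}$ according to the residue class of $\deg_{y_1}$ modulo $e$, and set $\ell' := x_2 + \cdots + x_n$. Since $x_1\circ$ shifts the class by $-1$ and $\ell' \circ$ preserves it, sorting $\ell \circ F = 0$ by residue class modulo $e$ yields $x_1 \circ F_1 = 0$ (class $0$), the recursions $x_1 \circ F_{j+1} + \ell' \circ F_j = 0$ for $1 \leq j \leq e-2$, and $\ell' \circ F_{e-1} = 0$ (class $e-1$). Every monomial of each $F_j$ has $y_1$-degree $\geq 1$, so $x_1 \circ$ is injective on its span; thus $F_1 = 0$ and the recursion cascades to $F_j = 0$ for all $j$.

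For the converse my plan is contrapositive: assume no value occurs $n-1$ times among $a_1, \ldots, a_n$, and exhibit both a non-zero element in the kernel and a non-zero element in the cokernel of $\times \ell \colon (S/I)_{d-1} \to (S/I)_d$; since these are exactly the conditions ``not injective'' and ``not surjective'', together they force rank strictly below $\min(\dim\text{source}, \dim\text{target})$, so WLP fails in degree $d-1$. For the kernel element I borrow the construction of \cite{MM3}: the product $f := \prod_{i=1}^{d-1} L_i$ with $L_i := \sum_j \xi^{i a_j} x_j$ satisfies $\ell \cdot f = \prod_{i=0}^{d-1} L_i$, a product which the action $M$ cyclically permutes and therefore fixes; hence $\ell f \in I_d$, and $f \neq 0$ because each $L_i$ is a non-trivial linear form.

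For the cokernel element I adapt Lemma~\ref{form}: set $z_j := e^{i\pi a_j / d}$, choose $r_j \in \mathbb{R} \setminus \{0\}$ with $\sum_j r_j z_j = 0$, let $L := \sum_j r_j z_j y_j$ and $F := L^d - \bar L^d$. The vanishing relation gives $\ell \circ L^d = d(\sum r_j z_j) L^{d-1} = 0$ and similarly $\ell \circ \bar L^d = 0$, hence $\ell \circ F = 0$. Direct expansion shows the coefficient of $y^\alpha$ in $F$ is $2 i \binom{d}{\alpha} \bigl(\prod_j r_j^{\alpha_j}\bigr) \sin\!\bigl(\tfrac{\pi}{d} \sum_j a_j \alpha_j\bigr)$, which vanishes exactly when $\sum_j a_j \alpha_j \equiv 0 \pmod d$; so $\supp(F)$ is precisely the set of non-$M$-invariant degree-$d$ monomials, placing $F$ in $(I^{-1})_d$, and $F \neq 0$ since such non-invariant monomials exist. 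The technical heart of the argument is producing the real $r_j$: viewing $z_1,\ldots,z_n$ as unit vectors at angles $\pi a_j/d \in [0,\pi)$ in $\mathbb{R}^2$, I perform a short case analysis on the number $m$ of distinct values in the multi-set $\{a_j\}$ — if $m \geq 3$, a three-term real relation with all coefficients non-zero arises from linear dependence in $\mathbb{R}^2$ and is distributed across indices sharing an $a$-value; if $m = 2$, the hypothesis forces each multiplicity to be $\geq 2$ and $n \geq 4$, in which case an equation canceling the sum within each value-class does the job; the case $m = 1$ is ruled out by hypothesis.
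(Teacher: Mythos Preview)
Your proof is correct, and in both halves it takes a route genuinely different from the paper's.

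For the ``if'' direction, the paper's argument is shorter: in your labeling ($a_2=\cdots=a_n$), every degree-$d$ monomial with $\alpha_1=0$ is invariant, so $(x_2,\ldots,x_n)^d\subset I$; hence every class in $(S/I)_d$ is divisible by $x_1$, and $\times x_1\colon (S/I)_{d-1}\to (S/I)_d$ is already surjective. Your residue-class cascade for $\ell$ itself is valid and has the merit of working directly with the canonical Lefschetz candidate, but the single-variable observation avoids the bookkeeping entirely.

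For the failure of surjectivity, the paper splits into cases: when at least three of the $a_i$ are distinct it restricts to three variables and invokes Lemma~\ref{form} to build $F=L^d-\bar L^d$; when exactly two values occur (each with multiplicity $\geq 2$) it writes down $(y_1-y_2)(y_{n-1}-y_n)^{d-1}$ explicitly. Your construction with $z_j=e^{i\pi a_j/d}$ and real $r_j$ satisfying $\sum r_j z_j=0$ unifies both cases in one stroke and is in effect a clean $n$-variable generalization of Lemma~\ref{form}; the coefficient computation via $\sin(\tfrac{\pi}{d}\sum a_j\alpha_j)$ is a nice way to see exactly which monomials survive.

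One small imprecision: when the number $m$ of distinct $a$-values is at least $4$, a \emph{three}-term relation leaves $r_j=0$ for the indices carrying the other $m-3$ values, contradicting your requirement $r_j\in\mathbb{R}\setminus\{0\}$. This does not damage the conclusion---you only need $\supp(F)$ contained in the non-invariant monomials and $F\neq 0$, and both hold---but it falsifies the intermediate claim that $\supp(F)$ is \emph{precisely} the non-invariant set. To repair the claim, observe that the real relation space among $m\geq 3$ pairwise non-parallel unit vectors in $\mathbb{R}^2$ has dimension $m-2$, while setting any single coordinate to zero drops the dimension to $m-3$; hence a relation with all $m$ coefficients nonzero exists and can then be distributed as you describe.
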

\begin{proof}
  Suppose at least $n-1$ of the integers $a_i$'s are equal and by
  relabeling the variables we may assume that
  $a_1=a_2=\dots =a_{n-1}$. For $n=3$, Lemma $5.2$ \cite{MM2} shows that $I$ satisfies the WLP. Similarly for $n\geq 3$ the ideal $I$ contains 
  $(x_1,x_2, \dots ,x_{n-1})^d$, and then all the monomials in $(S/I)_d$
  are divisible by $x_n$ which implies that the map
  $\times x_n :(S/I)_{d-1}\longrightarrow (S/I)_d$ is
  surjective. Since $[(S/I)/x_n(S/I)]_d=0$ we have that
  $[(S/I)/x_n(S/I)]_j=0$ for all $j\geq d$ and then
  $\times x_n :(S/I)_{j-1}\longrightarrow (S/I)_j$ is surjective for
  all $j\geq d$. On the other hand, since $I$ is generated in degree
  $d$, the map $\times x_n :(S/I)_{j-1}\longrightarrow (S/I)_j$ is
  injective, for every $j<d$. Therefore, $I$ has the WLP.

  To show the other implication, we assume that at most $n-2$ integers $a_i$ are equal and we prove that $I$ fails WLP  by showing that map $\times (x_1+\cdots+x_n) : (S/I)_{d-1}\longrightarrow (S/I)_d$ is neither injective nor surjective. \\
  By \cite[Proposition $4.6$]{MM3}, for the non-zero form $f=\prod^{d-1}_{i=1}(\xi^{ia_1}x_1+\cdots +\xi^{ia_n}x_n)$ of degree $d-1$ we have that $(x_1+\cdots +x_n)f$ is a form of degree $d$ in $I$. Therefore the map $\times (x_1+\cdots +x_n) : (S/I)_{d-1}\longrightarrow (S/I)_d$ is not injective.\\
  Now it remains to show the failure of surjectivity. To do so by Macaulay duality equivalently we show that the map $\circ (x_1+\cdots +x_n) : (I^{-1})_{d}\longrightarrow (I^{-1})_{d-1}$ is not injective. Note that the inverse module $(I^{-1})_d$ is generated by all the monomials of degree $d$ in the dual ring $R=\mathbb{K}[y_1,\dots ,y_n]$ which are not fixed by the action $M_{a_1,\dots ,a_n}$.\\
  We consider two cases depending on $a_i$'s. First, assume that there
  are at least three distinct integers among $a_i$'s and by relabeling
  the variables we may assume that $a_1<a_2<a_3$.

  By applying Lemma \ref{form} on the ring $R$, we get the linear form
  $$L = \sum^l_{j=0}\xi^j y_1+\sum^{l+k+1}_{j=l+1}\xi^j
  y_2+\sum^{2d-1}_{j=l+k+2}\xi^j y_3,$$
  where $l$ and $k$ are the residues of $a_2-a_3-1$ and $a_3-a_1-1$
  modulo $d$ and $\xi$ is a primitive $d$-th root of unity.  Since
  $a_1,a_2$ and $a_3$ are distinct $F=L^d-\overline{L}^d$ is non-zero
  form of degree $d$. The monomials with non-zero coefficients in $F$
  are exactly the monomials of degree $d$ in $\mathbb{K}[y_1,y_2,y_3]$
  which are not fixed by the action $M_{a_1,a_2,a_3}$. Therefore, all
  the monomials of degree $d$ in $R$ fixed by the action
  $M_{a_1,\dots , a_n}$ have coefficient zero in $F$ and thus we get
  that $F\in (I^{-1})_d$. Moreover, sum of the coefficients in $L$ is
  exactly $2(1+\xi^1+\xi^2+\cdots +\xi^{d-1})=0$. Therefore,
  $(x_1+\cdots +x_n)\circ F = (x_1+\cdots +x_n)\circ (L^d)
  -(x_1+\cdots +x_n)\circ (\overline{L}^d) =0$
  and this implies that
  $\times (x_1+\cdots+x_n) : (S/I)_{d-1}\longrightarrow (S/I)_d$ is
  not surjective in this case.
 
  Now assume that there are only two distinct integers among
  $a_i$'s. Without loss of generality we may assume that
  $a_1=a_2=\cdots =a_{m}< a_{m+1}=a_{m+2}=\cdots =a_n$. Since we
  assume that at most $n-2$ of the integers $a_i$'s are equal, we have
  $m,n-m\geq 2$ and so $a_1=a_2\neq a_{n-1}=a_n$. Consider the element
  $H=(y_1-y_2)(y_n-y_{n-1})^{d-1}\in R$. Acting $M^r_{a_1,\dots ,a_n}$
  on $H$ we get that
  $M^r_{a_1,\dots
    ,a_n}(y_1-y_2)(y_n-y_{n-1})^{d-1}=\xi^{r{a_1-a_n}}(y_1-y_2)(y_n-y_{n-1})^{d-1}$
  for every $0\leq r\leq d-1$. So $H$ is fixed by the action
  $M_{a_1,\dots ,a_n}$ if and only if $a_1=a_n$ which we assumed
  $a_1\neq a_n$. This implies that $H$ and none of the monomials in
  $H$ are fixed by $M_{a_1,\dots ,a_n}$, therefore $H\in (I^{-1})_d$.
  Moreover, we have that $(x_1+\cdots +x_n)\circ H =0$ and then the
  map $\times (x_1+\cdots+x_n) : (S/I)_{d-1}\longrightarrow (S/I)_d$
  is not surjective.
\end{proof}
We illustrate Theorem \ref{sur} in the next example for the ideal in
the polynomial ring with three variables failing the WLP.
\begin{exmp}
  Let $I\subset  S = \mathbb{K}[x_1,x_2,x_3]$ be the ideal generated by forms of degree $10$ fixed by the action of $M_{0,2,4}$ Theorem \ref{mon ideal} implies that $I$ is generated by all monomials of degree $d$ fixed by the action of $M_{0,2,4}$. By Theorem \ref{sur} above we get that $I$ fails WLP form degree $9$ to degree $10$. Since by Theorem \ref{galois mons} we have $H_{S/I}(10)=52<55=H_{S/I}(9)$, failing WLP is an assertion of failing surjectivity of the multiplication map $\times (x_1+x_2+x_3):(S/I)_{9}\longrightarrow(S/I)_{10}$. We equivalently show that the map $\circ (x_1+x_2+x_3):(I^{-1})_{10}\longrightarrow (I^{-1})_9$ is not injective.\\
  Using Lemma \ref{form}, we let $L$ be the linear form
  $L =
  \sum^7_{j=0}\xi^jy_1+\sum^{11}_{j=8}\xi^jy_2+\sum^{19}_{j=12}\xi^jy_3$
  for $l=7$ and $k=3$ in the dual ring
  $R=\mathbb{K}[y_1,y_2,y_3]$. Then we get the non-zero form
  $F=L^{10}-\overline{L}^{10}$ in the kernel of the map
  $\circ (x_1+x_2+x_3):(I^{-1})_{10}\longrightarrow (I^{-1})_9$.
  Computations by Macaulay2 software, show that the kernel of this
  map has dimension 2. We can actually get the other form in the
  kernel by changing $\xi$ with $\xi^\prime =\xi^3 = e^{{6\pi i}/d}$.
  Therefore we have
  $L^\prime =
  \sum^7_{j=0}\xi^{3j}y_1+\sum^{11}_{j=8}\xi^{3j}y_2+\sum^{19}_{j=12}\xi^{3j}y_3$
  and then $G = {L^\prime}^d-\overline{L^\prime}^d$ is another form of
  degree $10$ in the kernel where $(x_1+x_2+x_3)\circ G =0$.
\end{exmp}

\section{Dihedral Group acting on $\mathbb{K}[x,y,z]$ }
In the previous section we have studied the WLP of ideals generated by
invariant forms of degree $d$ under an action of cyclic group of order
$d$. In this section we study an action of dihedral group $D_{2d}$ on
the polynomial ring with three variables $S=\mathbb{K}[x,y,z]$ where
$\mathbb{K}=\mathbb{C}$ and $d\geq 2$. Let $\xi^{{2\pi i}/d}$ be a
primitive $d$-th root of unity and
\begin{align*} A_d=
  \begin{pmatrix}
    \xi & 0 &0 \\0 & \xi^{-1} & 0 \\0 & 0 &1&
  \end{pmatrix}, B_d=
                                              \begin{pmatrix}
                                                0 & \xi^{-1}   &0  \\
                                                \xi &  0 &  0 \\
                                                0 & 0 &-1&
                                              \end{pmatrix}
\end{align*}
be a representation of dihedral group $D_{2d}$.  Let
$F=\prod^{d-1}_{j=0}(\xi^jx+\xi^{-j}y+z)(\xi^{j}x+\xi^{-j}y-z)$ which
is a polynomial of degree $2d$ invariant by the action $A_d$ and $B_d$
of dihedral group $D_{2d}$. We study the WLP of the artinian monomial
ideal in $S$ generated by all the monomials in $F$ with non-zero
coefficients. First we count the number of generators of such ideals.

\begin{prop}\label{dihedral}
  For integer $d\geq 2$, let $A_d$ and $B_d$ be a representation of
  $D_{2d}$ and let $I\subset S$ be the artinian monomial ideal
  generated by all monomial with non-zero coefficients in
  $F = \prod^{d-1}_{j=0}(\xi^jx+\xi^{-j}y+z)(\xi^{j}x+\xi^{-j}y-z)$.
  Then $\mu(I) =d+3$, if $d=2k+1$; and $\mu(I) =2d+5$, if $d=2k$.
\end{prop}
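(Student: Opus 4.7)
The plan is to derive a closed-form expression for $F$ from which its support can be read off directly. First, grouping the $2d$ linear factors of $F$ into the $d$ pairs $(\beta_j+z)(\beta_j-z)$ with $\beta_j:=\xi^jx+\xi^{-j}y$, I can write $F=P(z,x,y)\,P(-z,x,y)$, where $P(T,x,y):=\prod_{j=0}^{d-1}(T-\xi^jx-\xi^{-j}y)$.

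The heart of the argument is the closed form $P(T,x,y)=q_d(T,xy)-x^d-y^d$, where $q_n(T,w)$ is the Chebyshev-type polynomial defined by $q_0=2$, $q_1=T$, and $q_n=Tq_{n-1}-wq_{n-2}$; equivalently, $q_n(T,w)=\alpha^n+\beta^n$ whenever $\alpha+\beta=T$ and $\alpha\beta=w$. To derive this I rewrite each factor as $T-\xi^jx-\xi^{-j}y=-\xi^{-j}\bigl(x(\xi^j)^2-T\xi^j+y\bigr)$, factor the quadratic $x\eta^2-T\eta+y$ as $x(\eta-s_1)(\eta-s_2)$ (setting $\alpha=xs_1$, $\beta=xs_2$, so that $\alpha+\beta=T$ and $\alpha\beta=xy$), and apply the identity $\prod_{\eta^d=1}(\eta-s)=(-1)^d(s^d-1)$. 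The prefactors $(-1)^d\prod_j\xi^{-j}$ collapse to $-1$ regardless of the parity of $d$, producing the stated formula.

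Since $q_d(-T,w)=(-1)^dq_d(T,w)$, the computation of $F$ splits by parity. For $d$ odd,
\[
F=[q_d(z,xy)-x^d-y^d][-q_d(z,xy)-x^d-y^d]=(x^d+y^d)^2-q_d(z,xy)^2,
\]
and the identity $q_d^2=q_{2d}+2(xy)^d$ (immediate from $(\alpha^d+\beta^d)^2=\alpha^{2d}+\beta^{2d}+2(\alpha\beta)^d$) collapses this to
\[
F=x^{2d}+y^{2d}-q_{2d}(z,xy)=x^{2d}+y^{2d}-\sum_{k=0}^{d}(-1)^k\tfrac{2d}{2d-k}\binom{2d-k}{k}x^ky^kz^{2d-2k}.
\]
Every coefficient $\tfrac{2d}{2d-k}\binom{2d-k}{k}$ is a positive rational, so the support is the disjoint union $\{x^{2d},y^{2d}\}\cup\{x^ky^kz^{2d-2k}:0\le k\le d\}$, giving $\mu(I)=d+3$.

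For $d$ even, $F=(q_d(z,xy)-x^d-y^d)^2=q_d^2-2(x^d+y^d)q_d+(x^d+y^d)^2$. Expanding and again using $q_d^2=q_{2d}+2(xy)^d$, the support decomposes into four families that are pairwise disjoint by comparing the value of $a-b\pmod d$ on each monomial $x^ay^bz^c$: the two pure powers $x^{2d},y^{2d}$; the ``diagonal'' monomials $x^my^mz^{2d-2m}$ for $0\le m\le d$, where the coefficient is $(-1)^m\tfrac{2d}{2d-m}\binom{2d-m}{m}\neq 0$ for $m<d$ and the three contributions at $m=d$ combine to $2+2(-1)^d+2=6\neq 0$; and the two ``shifted'' families $x^{d+k}y^kz^{d-2k}$ and $x^ky^{d+k}z^{d-2k}$ for $0\le k\le d/2$ coming from $-2(x^d+y^d)q_d$, each with coefficient $-2(-1)^k\tfrac{d}{d-k}\binom{d-k}{k}\neq 0$. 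Counting gives $2+(d+1)+2(d/2+1)=2d+5$.

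The main obstacle is the closed-form computation of $P(T,x,y)$; once that identity is in hand, the rest reduces to routine manipulation with $q_d$ and a straightforward disjointness check on monomial exponents.
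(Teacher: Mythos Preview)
Your argument is correct and takes a genuinely different route from the paper's proof. The paper proceeds case by case: for $d$ odd it rewrites $F$ as $\prod_{j=0}^{2d-1}(\omega^{2j}x+\omega^{(2d-2)j}y+\omega^{dj}z)$ with $\omega$ a primitive $2d$-th root of unity, invokes an external result (Theorem~2 of \cite{LWW}) to identify the support of such a product with the monomials fixed by a $\mathbb{Z}/2d\mathbb{Z}$-action, and then applies the counting formula of Proposition~\ref{galois mons}; for $d$ even it recognises $F=f^2$ with $f=\prod_j(\xi^jx+\xi^{-j}y+z)$, again uses \cite{LWW} and Proposition~\ref{galois mons} to list the $d/2+3$ monomials in $f$, proves an alternating-sign property of $f$ in $z$ by an ad hoc factorisation, and counts the monomials in $f^2$ by hand. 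Your approach is more uniform and entirely self-contained: the Dickson-polynomial identity $P(T,x,y)=q_d(T,xy)-x^d-y^d$ subsumes both the appeal to \cite{LWW} and the alternating-sign lemma, and the closed forms $F=x^{2d}+y^{2d}-q_{2d}(z,xy)$ (odd $d$) and $F=(q_d(z,xy)-x^d-y^d)^2$ (even $d$) make the support transparent. What the paper's route buys is the connection to its main theme of monomials fixed by cyclic actions; what your route buys is an explicit formula for $F$ itself and independence from external references.

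One small imprecision: in the even case you say the four families are distinguished by $a-b\pmod d$, but in fact all four have $a-b\equiv 0\pmod d$. The families are separated by the \emph{integer} value of $a-b$, which lies in $\{-2d,-d,0,d,2d\}$ respectively; this is what your subsequent count actually uses, so the conclusion is unaffected.
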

\begin{proof}
  Fist, assume $d=2k+1$ and consider the action of
  $M_{2,2d-2,d}=\begin{pmatrix}
    \omega^2 &0& 0\\
    0 &\omega^{2d-2}&0\\
    0&0&\omega^d
  \end{pmatrix}$
  of a cyclic group $\mathbb{Z}/2d\mathbb{Z}$ where
  $\omega = e^{{2\pi i}/2d}$ is a primitive $2d$-root of unity. Then
  consider the form
  $H=
  \prod^{2d-1}_{j=0}(\omega^{2j}x+\omega^{(2d-2)j}y+\omega^{dj}z)$.
  We have that
  \begin{align*}
    H&=\prod^{2d-1}_{j=0}(\xi^{j}x+\xi^{-j}y+(-1)^{j}z)\\
     & = \prod^{d-1}_{j=0}(\xi^{j}x+\xi^{-j}y+(-1)^{j}z)(\xi^{j+d}x+\xi^{-j+d}y+(-1)^{j+d}z)\\
     & = \prod^{d-1}_{j=0}(\xi^{j}x+\xi^{-j}y+(-1)^{j}z)(\xi^{j}x+\xi^{-j}y+(-1)^{j+d}z)\\
     & = \prod^{d-1}_{j=0}(\xi^{j}x+\xi^{-j}y-z)(\xi^{j}x+\xi^{-j}y+z)=F.
  \end{align*}
  Note that the monomials fixed by the action of $M_{2,2d-2,d}$,
  $M_{0,2d-4,d-2}$ and $M_{0,1,a}$ are the same, where
  $(2d-4)a=(d-2)$, since $d$ is odd such integer $a$ exists. By
  Theorem \ref{galois mons} we get that the number of monomials fixed
  by any of those actions is $d+3$. On the other hand Theorem 2, in
  \cite{LWW} implies that the number of terms with non-zero
  coefficient in $H$ and then in $F$ is exactly $d+3$ which implies
  that $\mu(I)=d+3$.

  Now assume that $d=2k$ and consider the action of
  $M_{2,2d-2,0}=\begin{pmatrix}
    \omega^2 &0& 0\\
    0 &\omega^{2d-2}&0\\
    0&0&1
  \end{pmatrix}$
  of a cyclic group $\mathbb{Z}/2d\mathbb{Z}$. Consider the form
  $G=\prod^{2d-1}_{j=0}(\omega^{2j}x+\omega^{(2d-2)j}y+z)$ then we
  have
  \begin{align*}
    G &= \prod^{2d-1}_{j=0}(\xi^{j}x+\xi^{-j}y+z)\\
      & = \prod^{d-1}_{j=0}(\xi^{j}x+\xi^{-j}y+z)(\xi^{j+d}x+\xi^{-j+d}y+z)\\
      & =  \prod^{d-1}_{j=0}(\xi^{j}x+\xi^{-j}y+z)(-\xi^{j}x-\xi^{-j}y+z)\\
      & = (-1)^d\prod^{d-1}_{j=0}(\xi^{j}x+\xi^{-j}y+z)(\xi^{j}x+\xi^{-j}y-z)=F\\
  \end{align*} 
  also we have that $F=G=(\prod^{d-1}_{j=0}(\xi^{j}x+\xi^{-j}y+z))^2$
  and denote $f:=\prod^{d-1}_{j=0}(\xi^{j}x+\xi^{-j}y+z)$. Theorem $2$
  in \cite{LWW}, implies that the monomials in $f$ with non-zero
  coefficients are exactly the monomials of degree $d$ fixed by the
  action $M_{1,d-1,0}=\begin{pmatrix}
    \xi &0& 0\\
    0 &\xi^{d-1}&0\\
    0&0&1
  \end{pmatrix}$
  of a cyclic group $\mathbb{Z}/d\mathbb{Z}$. Therefore, using Theorem
  \ref{galois mons} we get that there are $3+d/2$ monomials with
  non-zero coefficients in $f$, and they are exactly the monomials of
  the form $(xy)^\alpha z^{d-2\alpha}$ and $x^d$ and $y^d$.

  We now count the monomials in $F=f^2$. First we claim that the form
  $f$ has alternating sign in the variable $z$. To show this we
  evaluate the form in $x=y=1$ then we get
$$
\prod^{d-1}_{j=0}(\xi^{j}+\xi^{-j}+z)=\prod^{d/2-1}_{j=0}(\xi^{j}+\xi^{-j}+z)(\xi^{j+d/2}+\xi^{-j+d/2}+z)
= (-1)^{d/2}\prod^{d/2-1}_{j=0}(w^2+a^2_j)
$$ where $a_j=\xi^{j}+\xi^{-j}$ and $z^2=-w$. Then this expression proves the claim. 

Multiplying $x^d$ and $y^d$ in $f$ with $d/2+1$ monomials
$(xy)^\alpha z^{d-2\alpha}$ gives $2(d/2+1)=d+2$ monomials in
$F$. Using the claim above we get that all $d+1$ monomials of degree
$2d$ of the form $(xy)^\beta z^{2d-2\beta}$ have non-zero coefficients
in $F$. Adding 2 corresponding to the monomials $x^{2d}$ and $y^{2d}$
we get that there are exactly $2d+5$ monomials in $F$ with non-zero
coefficients or equivalently $\mu(I)=2d+5$.
\end{proof}
\begin{prop}\label{dih}
  For integer $d\geq 2$ let $I\subset \mathbb{K}[x,y,x]$ be the ideal
  generated by all the monomials with non-zero coefficients in
  $F=\prod^{d-1}_{j=0}(\xi^jx+\xi^{-j}y+z)(\xi^{j}x+\xi^{-j}y-z)$,
  introduced in Proposition \ref{dihedral}. Then $I$ fails WLP from
  degree $2d-1$ to degree $2d$.
\end{prop}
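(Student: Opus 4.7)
The plan is to identify $I$ with an ideal already classified by Theorem~\ref{sur} and then appeal to that theorem. From the proof of Proposition~\ref{dihedral}, when $d$ is odd the polynomial $F$ equals $H=\prod_{j=0}^{2d-1}(\omega^{2j}x+\omega^{(2d-2)j}y+\omega^{dj}z)$, and when $d$ is even it equals $G=\prod_{j=0}^{2d-1}(\omega^{2j}x+\omega^{(2d-2)j}y+z)$, where $\omega=e^{2\pi i/(2d)}$ is a primitive $2d$-th root of unity. The cyclic action $M_{2,2d-2,d}$ for odd $d$, respectively $M_{2,2d-2,0}$ for even $d$, of $\mathbb{Z}/2d\mathbb{Z}$ on $S=\mathbb{K}[x,y,z]$ cyclically permutes the $2d$ linear factors in this product, hence fixes $F$ as a polynomial. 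Therefore every monomial in $\supp(F)$ must be fixed by this action; since $|\supp(F)|$ as computed in Proposition~\ref{dihedral} ($d+3$ for odd $d$, $2d+5$ for even $d$) coincides with the number of fixed monomials given by Proposition~\ref{galois mons}, the inclusion is in fact an equality. Consequently $I$ is precisely the ideal generated by the degree-$2d$ monomials fixed by the relevant cyclic action of $\mathbb{Z}/2d\mathbb{Z}$.

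I then apply Theorem~\ref{sur} with $n=3$ to this cyclic action at degree $2d$. For $d\geq 3$ the triples $(2,2d-2,d)$ in the odd case and $(2,2d-2,0)$ in the even case are pairwise distinct modulo $2d$, so strictly fewer than $n-1=2$ of the integers coincide, and Theorem~\ref{sur} forces $I$ to fail the WLP. A closer look at the proof of Theorem~\ref{sur} shows that the failure is realised by exhibiting a non-zero element of $(I^{-1})_{2d}$ annihilated by $x+y+z$ (obtained from the linear form $L$ of Lemma~\ref{form}, whose coefficients sum to zero, via $L^{2d}-\overline{L}^{2d}$). By Macaulay duality, this kernel element is equivalent to the failure of surjectivity of the multiplication map $\times(x+y+z)\colon(S/I)_{2d-1}\to(S/I)_{2d}$, which is exactly the stated failure of the WLP from degree $2d-1$ to degree $2d$.

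The main obstacle is the support identification carried out in the first step. The inclusion $\supp(F)\subseteq\{\text{monomials fixed by the cyclic action}\}$ is immediate from the invariance of $H$ (or $G$), but the reverse inclusion is established only indirectly, through the numerical coincidence between Proposition~\ref{dihedral} and Proposition~\ref{galois mons}. Once that identification is in place, the argument reduces to a routine appeal to Theorem~\ref{sur} together with the elementary verification that the three exponents of the corresponding cyclic action are pairwise distinct for $d\geq 3$, and the Macaulay-duality translation into the failure of surjectivity in the claimed degree.
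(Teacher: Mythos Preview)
Your approach is genuinely different from the paper's and, for $d\ge 3$, more economical. The paper treats the two parities separately: for odd $d$ it compares Hilbert functions, finds $H_{S/I}(2d)>H_{S/I}(2d-1)$, and explicitly writes down a degree-$(2d-1)$ form $K$ with $(x+y+z)K=F$ to witness non-injectivity; for even $d$ it finds $H_{S/I}(2d)<H_{S/I}(2d-1)$ and uses the $S_2$-action on $S/I$ (comparing multiplicities of the sign representation in degrees $2d-1$ and $2d$) to force non-surjectivity, falling back on a Macaulay2 computation for $d=4$. Your identification of $I$ with the ideal of degree-$2d$ monomials fixed by $M_{2,2d-2,d}$ (odd $d$) or $M_{2,2d-2,0}$ (even $d$) is correct; in the even case the Proposition~\ref{galois mons} count $1+\tfrac{2\cdot 2d+4+2+2}{2}=2d+5$ is needed, which the paper does not record but which is an easy computation. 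The payoff is that you bypass the representation-theoretic argument and the computer check entirely.

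There is, however, one gap in the localisation step. You invoke only the \emph{surjectivity} failure coming from $L^{2d}-\overline{L}^{2d}\in(I^{-1})_{2d}$ via Lemma~\ref{form}. For odd $d$ one has $H_{S/I}(2d-1)<H_{S/I}(2d)$, so non-surjectivity is automatic and says nothing about maximal rank; what is needed there is non-\emph{injectivity}, which is the other half of the proof of Theorem~\ref{sur} (the product form $\prod_{i=1}^{2d-1}(\omega^{2i}x+\omega^{(2d-2)i}y+\omega^{di}z)$ lying in the kernel). Citing both halves, or simply noting that the proof of Theorem~\ref{sur} shows the multiplication map in degree $2d-1$ is neither injective nor surjective, closes this.

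Finally, neither your argument nor the paper's actually covers $d=2$: you correctly observe that your exponent triples are pairwise distinct only for $d\ge 3$, and the paper's representation-theoretic inequality is asserted for $d\ge 5$ with a Macaulay2 check only at $d=4$. In fact for $d=2$ the relevant cyclic action is $M_{2,2,0}$ in $\mathbb{Z}/4\mathbb{Z}$, which has two equal exponents, so Theorem~\ref{sur} says that $I$ \emph{satisfies} the WLP; one checks directly that $\times(x+y+z)\colon(S/I)_3\to(S/I)_4$ is surjective. Thus the proposition as stated is false at $d=2$, and the hypothesis should read $d\ge 3$.
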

\begin{proof}
  Suppose that $d=2k+1$, using Proposition \ref{dihedral} we have that
$$H_{S/I}(2d)=H_{S}(2d)-\mu(I)=(2d^2+3d+1)-(d+3)=2(d^2+d-1)> d(2d+1)=H_{S/I}(2d-1).$$ 
Consider the form
$K=
(x+y-z)\prod^{d-1}_{i=1}(\xi^ix+\xi^{-i}y+z)(\xi^{-i}x+\xi^{i}y-z)$
of degree $2d-1$. Since we have $(x+y+z)K=F$, the map
$\times (x+y+z) :(S/I)_{2d-1}\longrightarrow (S/I)_{2d}$ is not
injective.

Now assume $d=2k$, then Proposition \ref{dihedral} implies that
$$
H_{S/I}(2d)=H_S(2d)-\mu(I)=(2d^2+3d+1)-(2d+5)=2d^2+d-4< d(2d+1) =
H_{S/I}(2d-1).
$$
Therefore, in order to prove $S/I$ fails the WLP we need to prove that
the multiplication map by $x+y+z$ on the algebra from degree $2d-1$ to
degree $2d$ is not surjective. To do so we use the representation
theory of the symmetric group $S_2$ where $1$ acts trivially and $-1$
interchanges $x$ and $y$. Note that this action fixes the form
$x+y+z$. We look at the multiplicity of the alternating representation
of $S/I$ in degree $2d-1$ and $2d$. In degree $2d-1$ there are
$d(2d+1)$ monomials in $S/I$ that are fixed by the identity
permutation and there are $d$ monomials of the form
$(xy)^\alpha z^{2d-1-2\alpha}$ that are fixed by interchanging $x$ and
$y$. Therefore the multiplicity of the alternating representation in
degree $2d-1$ is $(d(2d+1) - d)/{2}=d^2$. In degree $2d$ there are
$(2d+1)(d+1)-(2d+5)=2d^2+d-4$ monomials in $S/I$ that are all fixed by
the identity permutation and there is no monomial of the form
$(xy)^\alpha z^{2d-2\alpha}$ in the algebra which is fixed by
interchanging $x$ and $y$, since they all belong to $I$. So the
multiplicity of the alternating representation of $S/I$ in degree $2d$
is $(2d^2+d-4)/2$. Since $(2d^2+d-4)/2> d^2$ for $d\geq 5$ the
multiplication by $x+y+z$ cannot be surjective by Schur's lemma.\\ For
$d=4$ computations in Macaulay2 show the multiplication map by
$x+y+z$ is not surjective on $S/I$ from degree $7$ to degree $8$.
\end{proof}
\begin{rem}
  In Proposition \ref{dih}, we have proved that for odd integer $d$
  the monomial ideals generated by the monomials of degree $2d$ with
  non-zero coefficients in $F$ fail WLP by failing injectivity in
  degree $2d-1$, therefore such ideals define minimal monomial
  Togliatti systems.
\end{rem}

\section{Acknowledgment}
This work was supported by the grant VR2013-4545. Computations using
the algebra software Macaulay2 \cite{13} were essential to get the
ideas behind some of the proofs.

\bibliography{mybib}{}
\bibliographystyle{plain}
\end{document}